\newcommand{\N}{\mathbb{N}}
\newcommand{\Z}{\mathbb{Z}}
\newcommand{\R}{\mathbb{R}}
\newcommand{\Q}{\mathbb{Q}}
\newcommand{\D}{\mathcal{D}}
\newcommand{\ts}{\widetilde{\sigma}}
\newcommand{\p}{\mathcal{P}}
\newcommand{\B}{\mathcal{B}}
\newcommand{\spanY}{\mathrm{span}_{D(Y)[\bar t^{\pm}]}}
\newcommand{\spanX}{\mathrm{span}_{D(X)}}
\DeclareMathOperator{\Span}{span}
\newcommand{\A}{\mathcal{A}}
\renewcommand{\C}{\mathbb{C}}
\renewcommand{\subset}{\subseteq}
\DeclareMathOperator{\supp}{supp}
\DeclareMathOperator{\cone}{cone}
\DeclareMathOperator{\ord}{ord}
\DeclareMathOperator{\im}{im}
\DeclareMathOperator{\ore}{Ore}
\DeclareMathOperator{\rank}{rk}
\DeclareMathOperator{\GL}{GL}
\DeclareMathOperator{\Det}{\det\nolimits}
\DeclareMathOperator{\tree}{Tree}
\DeclareMathOperator{\rat}{Rat}
\theoremstyle{plain}
\newtheorem{theorem}{Theorem}[section]
\crefname{theorem}{Theorem}{Theorems}
\newtheorem{question}[theorem]{Question}
\newtheorem{proposition}[theorem]{Proposition}
\newtheorem{lemma}[theorem]{Lemma}
\newtheorem{corollary}[theorem]{Corollary}
\theoremstyle{remark}
\newtheorem{remark}[theorem]{Remark}
\theoremstyle{definition}
\newtheorem{definition}[theorem]{Definition}
\newtheorem{example}[theorem]{Example}
\newtheorem{notation}[theorem]{Notation}
\providecommand*{\twoheadrightarrowfill@}{%
  \arrowfill@\relbar\relbar\twoheadrightarrow
}
\providecommand*{\twoheadleftarrowfill@}{%
  \arrowfill@\twoheadleftarrow\relbar\relbar
}
\providecommand*{\xtwoheadrightarrow}[2][]{%
  \ext@arrow 0579\twoheadrightarrowfill@{#1}{#2}%
}
\providecommand*{\xtwoheadleftarrow}[2][]{%
  \ext@arrow 5097\twoheadleftarrowfill@{#1}{#2}%
}
\title[Agrarian Betti numbers, with a twist]{Agrarian and $\ell^2$-Betti numbers of locally indicable groups, with a twist}
\date{}
\author{Dawid Kielak}
\address{Dawid Kielak, Mathematical Institute, University of Oxford, Andrew Wiles Building,
Radcliffe Observatory Quarter,
Woodstock Road,
Oxford
OX2 6GG,
United Kingdom
}
\email{kielak@maths.ox.ac.uk}
\author{Bin Sun}
\address{Bin Sun, Mathematical Institute, University of Oxford, Andrew Wiles Building,
Radcliffe Observatory Quarter,
Woodstock Road,
Oxford
OX2 6GG,
United Kingdom
}
\email{bin.sun@maths.ox.ac.uk}
\newcounter{dawidcomments}
\begin{document}

\maketitle

\begin{abstract}
 We prove that twisted $\ell^2$-Betti numbers of locally indicable groups are equal to the usual $\ell^2$-Betti numbers rescaled by the dimension of the twisting representation; this answers a question of L\"uck for this class of groups. It also leads to two formulae: given a fibration $E$ with base space $B$ having locally indicable fundamental group, and with a simply-connected fiber $F$, the first formula bounds $\ell^2$-Betti numbers $b_i^{(2)}(E)$ of $E$ in terms of $\ell^2$-Betti numbers of $B$ and usual Betti numbers of $F$; the second formula computes $b_i^{(2)}(E)$ exactly in terms of the same data, provided that $F$ is a high-dimensional sphere.
 
 We also present an inequality between twisted Alexander and Thurston norms for free-by-cyclic groups and $3$-manifolds.
 The technical tools we use come from the theory of generalised agrarian invariants, whose study we initiate in this paper.
\end{abstract}

\section{Introduction}
Motivated by the Gauss--Bonnet theorem, and seeking  \emph{``Zussamenh\"ange und Bindungen [...] zwischen den topologischen Eigenschaften einerseits und den differentialgeometrischen Eigenschaften andererseits''}, Heinz Hopf formulated in 1932 \cite{hopf1932differential}*{Page 224} a somewhat vague question about the relationship between curvature of even-dimensional Riemannian manifolds and their Euler characteristic. The question was then given an explicit form, now known as the Hopf Conjecture. The version of the conjecture for negatively curved manifolds can be found in the work of Yau
\cite{Yau1982}*{Problem 10}), and states: every closed Riemannian $2n$-manifold
$M$ of negative sectional curvature satisfies $(-1)^n\chi(M)>0$, where $\chi(M)$ denotes the Euler characteristic of $M$. 

Since Riemannian manifolds of negative sectional curvature are aspherical, one can replace the geometric assumption by a topological one. This was done by Thurston (see \cite{Kirby1997}*{Problem 4.10}), who formulated the following conjecture: every closed aspherical $2n$-dimensional manifold $M$ satisfies  $(-1)^n\chi(M)\geqslant 0$.

An attack strategy for resolving Hopf Conjecture was proposed by Singer, as reported by Dodziuk \cite{Dodziuk1979}*{Conjecture 2} and Yau~\cite{Yau1982}. What is now known as the Singer Conjecture states that the $\ell^2$-Betti numbers $b_i^{(2)}(M)$ of a closed aspherical $n$-manifold $M$ should vanish in all dimensions, except perhaps the middle dimension $\frac n 2$, if it exists; moreover, if the manifold is negatively curved, then the middle $\ell^2$-Betti number should be strictly positive. Knowing the $\ell^2$-Betti numbers allows one to compute the $\ell^2$-Euler characteristic, which for manifolds is equal to the usual Euler characteristic by the $L^2$ Index Theorem of Atiyah~\cite{Atiyah1976}. Therefore, Singer Conjecture implies Hopf and Thurston Conjectures. Furthermore, Singer Conjecture was established for locally symmetric spaces by Borel~\cite{Borel1985} (see also \cite{Olbrich2002}), a large class of manifolds of classical interest.

Singer Conjecture remains open, at least partly because computing $\ell^2$-Betti numbers of manifolds is notoriously difficult. In this article we give a method of computing $\ell^2$-Betti numbers for  fibrations 
 $F\rightarrow E \rightarrow B$  when $F$ is sufficiently similar to a high-dimensional sphere, and when $\pi_1(B)$ is virtually locally indicable:

\begin{theorem}\label{thm. l2 betti fibration simple}
Let $F\rightarrow E \rightarrow B$ be a fibration of connected finite CW-complexes, with $\pi_1(B)$ being virtually locally indicable. If $F$ is simply connected, or more generally, if the map $\pi_1(E)\rightarrow\pi_1(B)$ induced by the fibration is an isomorphism, then
\[b^{(2)}_i(E)\leqslant \sum^i_{j=0}b_j(F)\cdot b^{(2)}_{i-j}(B)\]
for every $i \in \mathbb N$.

If moreover the homology of $F$ with $\C$-coefficients is non-zero in at most two degrees, $0$ and $n$ with $n\geqslant \max \{2, \dim B \}$, then for every $i \in \mathbb N$ we have
    \[b^{(2)}_i(E)=b^{(2)}_i(B)+b_n(F)\cdot b^{(2)}_{i-n}(B).\]
\end{theorem}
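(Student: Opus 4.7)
The plan is to feed the main theorem on twisted $\ell^2$-Betti numbers of locally indicable groups into the $\ell^2$-Leray--Serre spectral sequence of the fibration. Assume first that $G := \pi_1(B) = \pi_1(E)$ is locally indicable; the virtually locally indicable case will follow by a finite-cover argument. In $\mathcal{N}(G)$-modules the spectral sequence takes the form
\[
E_2^{p,q} = b_p^{(2)}\bigl(B;\, H_q(F; \C)\bigr) \ \Longrightarrow\ b_{p+q}^{(2)}(E),
\]
with $H_q(F; \C)$ regarded as a finite-dimensional $G$-representation via the monodromy action. The main theorem then rescales the entries to $E_2^{p,q} = b_q(F) \cdot b_p^{(2)}(B)$.

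For the first statement, additivity of the von Neumann dimension on short exact sequences of $\mathcal{N}(G)$-modules yields
\[
b_i^{(2)}(E) = \sum_{p+q=i} \dim_{\mathcal{N}(G)} E_\infty^{p,q} \leq \sum_{p+q=i} \dim_{\mathcal{N}(G)} E_2^{p,q} = \sum_{j=0}^i b_j(F)\, b_{i-j}^{(2)}(B).
\]

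For the second statement, I claim that the spectral sequence degenerates at $E_2$. The only nonzero rows are $q = 0$ and $q = n$, and the differential $d_r$ shifts the row by $r - 1$; so the only differential that could mix the two rows is $d_{n+1}\colon E_{n+1}^{p, 0} \to E_{n+1}^{p - n - 1, n}$. Its target column is $p - n - 1$, and since $0 \leq p \leq \dim B \leq n$ this is strictly negative, so the target is zero. Thus $E_\infty = E_2$, and the display above upgrades to the equality $b_i^{(2)}(E) = b_i^{(2)}(B) + b_n(F)\, b_{i-n}^{(2)}(B)$, using $b_0(F) = 1$. Note the role of the hypothesis $n \geq 2$: were $n = 1$, the $E_2$-differential $d_2\colon E_2^{p,0}\to E_2^{p-2,1}$ could already cross rows.

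For the virtually locally indicable case, one passes to a finite cover $E' \to E$ of index $d$ corresponding to a locally indicable finite-index subgroup of $G$, pulls back the fibration to $F \to E' \to B'$, applies the locally indicable version, and uses that the $\ell^2$-Betti numbers of total and base spaces both scale by $d$ while the ordinary Betti numbers of $F$ are unchanged. The main technical obstacle is ensuring that the $\ell^2$-Leray--Serre spectral sequence is available in the required generality, with an $E_2$-term that coincides with the notion of twisted $\ell^2$-Betti number to which the main theorem applies; this ultimately boils down to identifying compatible chain-complex models of $E$ as a twisted complex over $B$.
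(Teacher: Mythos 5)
Your proposal is essentially the paper's own proof: the paper likewise feeds \cref{thm. twisted l2 betti} into the Leray--Serre spectral sequence of the fibration (set up, following L\"uck, with coefficients in the Linnell skew field $\D_{\pi_1(B)}$ rather than $\mathcal{N}(G)$ -- which is precisely how the $E^2$-identification you flag as the technical obstacle is made rigorous), gets the inequality from subquotients over a skew field, degeneration of the two-row $E^2$-page for the equality, and the same finite-cover scaling argument in the virtually locally indicable case. (One harmless inaccuracy: your explanation of the hypothesis $n\geqslant 2$ is not quite right, since for $n=1$ and $\dim B\leqslant 1$ the crossing differential would still land in a negative column; the degeneration really only uses $n\geqslant \dim B$.)
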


In fact, we prove a more general result, \cref{thm. l2 betti fibration}, whose statement is perhaps too involved for this introduction; \cref{thm. l2 betti fibration simple} summarises items \ref{item. l2 betti 1} and \ref{item. l2 betti 2} of \cref{thm. l2 betti fibration}.

Here, we recall that a group $G$ is \textit{locally indicable} if every non-trivial finitely generated subgroup admits an epimorphism onto $\Z$. A group $G$ is \textit{virtually locally indicable} if $G$ has a finite-index locally indicable subgroup.

An important class of examples satisfying the assumption of the second part of \cref{thm. l2 betti fibration simple} is formed by sphere bundles. Even though such bundles may not be of central importance in group theory, they do lead to interesting examples in topology.
The \textit{Milnor Exotic Sphere} \cites{milnor1956manifolds} is a fiber bundle $S^3\rightarrow M^7\rightarrow S^4$. By computing an invariant of the smooth structure of this fibration, Milnor proved that $M^7$ is homeomorphic, but not diffeomorphic, to $S^7$. There is also the Hopf bundle \cites{hopf1964abbildungen} $S^1\rightarrow S^3\rightarrow S^2$ which shows that $\pi_3(S^2)\neq 0$. Heuristically, by replacing the base with a much more general manifold, one should be able to construct many more exotic manifolds.

\smallskip
Using techniques similar to those underpinning \cref{thm. l2 betti fibration simple}, in \cref{sec. twisted l2 betti} we will prove the following results.

\begin{corollary}\label{cor. surface base}
    Let $F\rightarrow E \rightarrow B$ be a fiber bundle of compact connected manifolds such that $F$ is simply connected and $B$ is a surface with $|\pi_1(B)|=\infty$ (that is, except when $B$ is either $S^2$, the $2$-disk, or the projective plane $P^2$). Then for every $i \in \mathbb N$ we have
    \[b^{(2)}_i(E)=-\chi(B)b_{i-1}(F).\]
\end{corollary}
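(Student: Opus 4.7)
The plan is to combine the upper bound of \cref{thm. l2 betti fibration simple} with a Leray--Serre spectral sequence argument that exploits the concentration of the $\ell^2$-Betti numbers of a surface in a single degree.

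First I would verify the hypotheses. A compact connected surface $B$ with $|\pi_1(B)|=\infty$ has locally indicable fundamental group: if $\partial B \neq \emptyset$ then $\pi_1(B)$ is free, and otherwise $B$ is a closed surface of orientable genus $\geqslant 1$ or non-orientable genus $\geqslant 2$, whose fundamental group is a torsion-free surface group, hence locally indicable. Since $F$ is simply connected, the long exact sequence of the fibration gives $\pi_1(E)\cong \pi_1(B)$, so the hypotheses of \cref{thm. l2 betti fibration simple} are met.

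Next I would compute the $\ell^2$-Betti numbers of $B$. The value $b_0^{(2)}(B) = 0$ because $\pi_1(B)$ is infinite, and $b_i^{(2)}(B) = 0$ for $i \geqslant 3$ for dimension reasons. For $i = 2$: if $\partial B \neq \emptyset$ then $B$ deformation retracts to a graph, while if $B$ is closed then it is aspherical and $\ell^2$-Poincar\'e duality gives $b_2^{(2)}(B) = b_0^{(2)}(B) = 0$. Since $\chi(B) = \sum_i (-1)^i b_i^{(2)}(B)$, we obtain $b_1^{(2)}(B) = -\chi(B)$. Plugging these values into the inequality of \cref{thm. l2 betti fibration simple}, only the $j = i - 1$ summand survives:
\[ b_i^{(2)}(E) \leqslant b_{i-1}(F)\cdot b_1^{(2)}(B) = -\chi(B)\, b_{i-1}(F). \]

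For the reverse inequality, I would invoke an $\ell^2$-Leray--Serre spectral sequence for $F\to E\to B$ converging to $H_\ast^{(2)}(E)$. Because $F$ is simply connected, $\pi_1(B)$ acts trivially on $H_q(F;\C)$, and the main theorem of the paper (twisted $\ell^2$-Betti numbers of locally indicable groups equal untwisted ones rescaled by the dimension of the twisting representation) identifies
\[ \dim_{\mathcal{N}(\pi_1(B))} E^2_{p,q} = b_q(F)\cdot b_p^{(2)}(B), \]
which is nonzero only when $p = 1$. The $E^2$-page being concentrated in a single column forces every differential $d^r$ to vanish, so $E^2 = E^\infty$, and additivity of the $\ell^2$-dimension on the associated graded of the limit filtration yields
\[ b_i^{(2)}(E) = \dim_{\mathcal{N}(\pi_1(B))} E^\infty_{1,\, i-1} = -\chi(B)\, b_{i-1}(F), \]
matching the upper bound.

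The main technical point to confirm is the existence and $E^2$-identification of the $\ell^2$-Leray--Serre spectral sequence together with additivity of $\ell^2$-dimension on the associated graded; both should follow from the twisted $\ell^2$-Betti machinery built up earlier in the paper, since the crucial input—that tensoring with a finite-dimensional trivial representation multiplies $\ell^2$-dimensions by the dimension of the representation—is precisely the main theorem.
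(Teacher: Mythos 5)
Your argument is essentially the paper's proof: the Leray--Serre spectral sequence \eqref{eq. spectral sequence} combined with \cref{thm. twisted l2 betti} and the fact that $b^{(2)}_\ast(B)$ is concentrated in degree $1$ with value $-\chi(B)$, so the $E^2$-page has a single nonzero column and the sequence collapses; your preliminary upper bound via \cref{thm. l2 betti fibration simple} is redundant, since the collapsed spectral sequence already yields the equality. One small correction: simple connectivity of $F$ does \emph{not} force the monodromy action of $\pi_1(B)$ on $H_q(F;\C)$ to be trivial (nonorientable sphere bundles, for instance, have monodromy acting by $-1$ on top homology), but this is harmless here because \cref{thm. twisted l2 betti} applies to an arbitrary finite-dimensional twist, which is exactly how the paper identifies the $E^2$-page.
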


\begin{corollary}\label{cor. 3-mnfl base}
    Let $F\rightarrow E \rightarrow B$ be a fiber bundle of compact connected manifolds such that $F$ is simply connected and $B$ is an orientable irreducible $3$-manifold. Suppose further that $B$ either has positive virtual first Betti number or is geometric or is non-positively curved. Then $b^{(2)}_{\ast}(E)=0$.
\end{corollary}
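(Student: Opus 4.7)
The plan is to deduce the result directly from the first inequality of \cref{thm. l2 betti fibration simple}. Since $F$ is simply connected, that inequality reads
\[
b^{(2)}_i(E) \leqslant \sum_{j=0}^i b_j(F) \cdot b^{(2)}_{i-j}(B),
\]
so it suffices to check two inputs: that $\pi_1(B)$ is virtually locally indicable, so that the theorem is applicable, and that $b^{(2)}_i(B) = 0$ for every $i$, so that the right-hand side vanishes identically.

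Virtual local indicability is handled case by case. If $B$ has positive virtual first Betti number, pass to a finite cover $B' \to B$ with $b_1(B') \geqslant 1$; Howie's theorem for compact orientable irreducible $3$-manifolds then yields that $\pi_1(B')$ is locally indicable, whence $\pi_1(B)$ is virtually locally indicable. In the non-positively curved case, the combined work of Agol and Wise (with Przytycki--Wise in the graph-manifold setting) shows that $\pi_1(B)$ is virtually special, hence virtually fibered, and in particular virtually has positive first Betti number, reducing to the previous case. The geometric case is a subcase of one of these two, modulo the elliptic geometry, where $\pi_1(B)$ is finite and the assertion of virtual local indicability is trivial.

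Vanishing of $b^{(2)}_*(B)$ follows from Lott--L\"uck's computation of the $\ell^2$-Betti numbers of compact $3$-manifolds admitting a geometric decomposition in the sense of Thurston. Combined with Perelman's proof of the geometrization conjecture, this gives that all $\ell^2$-Betti numbers of a compact orientable irreducible $3$-manifold with infinite fundamental group vanish. Substituting into the displayed inequality then yields $b^{(2)}_i(E) = 0$ for every $i$, as required.

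The main obstacle is assembling the $3$-manifold-theoretic inputs uniformly across the three hypotheses. Each ingredient (Howie, Agol--Wise, Lott--L\"uck, Perelman) is classical, but ensuring that both virtual local indicability and total vanishing of $\ell^2$-Betti numbers hold under the weakest of the three hypotheses requires some care, as does the case of $B$ with non-empty boundary, where vanishing of the top $\ell^2$-Betti number is automatic (since such a $B$ is homotopy equivalent to a $2$-complex) but the lower ones require a separate argument, e.g.\ via passage to the double.
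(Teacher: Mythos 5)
Your proposal is essentially the paper's own proof: the paper also deduces the corollary from item \ref{item. l2 betti 1} of \cref{thm. l2 betti fibration}, with virtual local indicability of $\pi_1(B)$ supplied by \cref{lem. eg of locally indicable} and the vanishing of $b^{(2)}_{\ast}(B)$ taken from Lott--L\"uck's computation of $\ell^2$-Betti numbers of $3$-manifolds. The only divergence is in how virtual local indicability is obtained in the non-positively curved and geometric cases (you go via virtually special, hence virtually fibered, hence positive virtual $b_1$; the paper's lemma uses virtual bi-orderability and a case analysis of the geometries), which is a minor variation on the same argument.
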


In the setting of all of the above results, the $\ell^2$-homology of $E$ is related via the Leray--Serre spectral sequence to the homology of $B$ with $\ell^2$-coefficients twisted by the action of $\pi_1(B)$ on the usual $\C$-homology of $F$, as noticed by L\"{u}ck \cite{lueck2018twisting} (see \cref{sec. twisted l2 betti}). Our proof of Theorem \ref{thm. l2 betti fibration} thus follows from relating the \textit{twisted $\ell^2$-Betti number} of $B$ to $b^{(2)}_{\ast}(B)$.

In fact, the question of the nature of twisted $\ell^2$-Betti numbers can be asked more generally, in the realm of group theory; in particular, for a group $G$ and an $n$-dimensional complex representation $\sigma$, L\"uck asked whether the $\ell^2$-Betti numbers of $G$ twisted by $\sigma$ are equal to the corresponding usual $\ell^2$-Betti numbers  multiplied by $n$ (see Question \ref{q. luck}). In \cref{thm. twisted l2 betti} we prove that this is indeed the case when $G$ is locally indicable.

\smallskip

Another setting in which one wishes to twist homology with finite-dimensional representations is that of knots and $3$-manifolds. Here one studies for example the twisted Alexander polynomial, which was introduced, as a refined version of the classical Alexander polynomial, by Lin \cites{lin2001representations} for knot groups and Wada \cites{wada1994twisted} for general finitely presented groups. It is used to distinguish certain knots from their inversions \cites{kirk1999twisted}. Twisted Alexander polynomials can also detect fiberedness of characters, as proven by Friedl--Vidussi~\cite{FriedlVidussi2008}. This fact is crucially used in two important recent results of Jaikin-Zapirain~\cite{Jaikin2020} and Liu~\cite{Liu2020}.
We refer the reader to \cites{friedl2011survey} for a survey on this topic.

Every twisted Alexander polynomial leads to a \emph{twisted Alexander norm}, a function  $H^1(G,\R) \to [0,\infty)$. The $\ell^2$-analogue of this for locally indicable groups is the Thurston norm  $H^1(G,\R) \to [0,\infty)$, which is in fact a semi-norm. In the untwisted setting, for free-by-cyclic groups and many $3$-manifold groups, the Alexander norm is also a semi-norm with values bounded above by the Thurston norm \cites{mcmullen2002alexander,funke2018alexander}. We show that the same is true for twisted Alexander norms: 

\begin{theorem}[Theorems \ref{thm. free-by-cyclic} and \ref{thm. 3-mnfl}]
Let $G$ be either
\begin{itemize}
    \item a (finitely generated free)-by-(infinite cyclic) group, or
    \item the fundamental group of a closed connected orientable $3$-manifold that fibers over $S^1$.
\end{itemize}
Then for every finite-dimensional representation $\sigma\colon G\rightarrow \GL_n(\C)$ and every character $\phi\in H^1(G,\Z)$ we have
\[\|\phi\|_{\sigma}\leqslant n\cdot\|\phi\|_T,\]
where $\|\phi\|_{\sigma}$ (resp. $\|\phi\|_T$) denotes the twisted Alexander norm of $\phi$ with respect to $\sigma$ (resp. the Thurston norm of $\phi$). Moreover, equality holds when $\phi$ is a \textit{fibered character}, i.e., when $\ker\phi$ is finitely generated.
\end{theorem}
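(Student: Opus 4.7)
My plan is to fit both norms into the generalised agrarian framework and then quote the twisted $\ell^2$-Betti number theorem (\cref{thm. twisted l2 betti}) as a black box. Concretely: a free-by-cyclic group, and the fundamental group of a closed orientable $3$-manifold that fibers over $S^1$, are both locally indicable, and in both cases there is a Hughes-free embedding $\C G \hookrightarrow \D(G)$ into a skew field. A character $\phi \in H^1(G,\Z)$ extends this to an agrarian map valued in an Ore extension $\D(\ker\phi) \ast_\phi \Z$, and the Thurston norm $\|\phi\|_T$ is computed (by Funke--Kielak for the free-by-cyclic case, Friedl--Lück for the $3$-manifold case) as the width in the $\phi$-direction of the $L^2$-polytope, equivalently as the sum of positive and negative Dieudonné degrees of the Jacobian of a presentation with respect to this Ore extension. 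On the other hand, the twisted Alexander norm $\|\phi\|_\sigma$ is the degree of the twisted Alexander polynomial, which I want to realise as the $\phi$-degree of the Dieudonné determinant of the same Jacobian after tensoring with the representation $\sigma$.

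First, I would formalise the twisted picture: composing $\rho_\phi \otimes \sigma \colon G \to \GL_n(\D(\ker\phi) \ast_\phi \Z)$ gives a new agrarian map of dimension $n$, and the twisted Alexander polynomial arises as the Dieudonné determinant of the corresponding matrix over this $n \times n$ matrix ring. The heart of the argument is to translate the statement that twisted $\ell^2$-Betti numbers of a locally indicable group $G$ equal $n$ times the usual ones into a polytope statement: the Newton polytope of the twisted determinant is contained in $n$ times the Newton polytope of the untwisted Dieudonné determinant. This containment, once established, immediately implies
\[
\|\phi\|_\sigma = \mathrm{width}_\phi(P_\sigma) \leqslant n \cdot \mathrm{width}_\phi(P_1) = n \cdot \|\phi\|_T.
\]

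The step I expect to be the main obstacle is passing cleanly from the equality of twisted and $n$-fold untwisted $\ell^2$-Betti numbers to the polytope inclusion, because the $\ell^2$-Betti number statement only sees Euler characteristics, not full polytopes. I would handle this by running the argument at the level of the universal $L^2$-torsion polytope: the twisted Betti number identity, applied to all finite index subgroups containing $\ker\phi$ and their covers, should pin down enough linear functionals on the polytope to give the containment of Newton polytopes (this mirrors how McMullen's inequality is upgraded from Euler-characteristic information to a norm inequality). Alternatively, one can work directly with Dieudonné determinants and use that the tensor product $M \otimes I_n$ has Dieudonné determinant whose $\phi$-degree is $n$ times that of $M$, together with the fact that passing to a matrix sub-determinant can only decrease the Newton polytope.

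Finally, for a fibered $\phi$, its kernel is finitely generated so the Jacobian becomes invertible over $\D(\ker\phi) \ast_\phi \Z$ after removing a single column. In that case, the twisted Dieudonné determinant factors exactly through tensoring the untwisted determinant by $\sigma$, and its Newton polytope is exactly $n$ times the untwisted one, giving the equality $\|\phi\|_\sigma = n \cdot \|\phi\|_T$. I would conclude by handling the two cases (free-by-cyclic and fibered $3$-manifold) in parallel, the only difference being the Poincaré duality symmetrisation needed in the $3$-manifold case to identify the $L^2$-polytope with the dual Thurston polytope.
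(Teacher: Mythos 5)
There is a genuine gap, and it sits exactly where you predicted: you cannot get the norm inequality out of \cref{thm. twisted l2 betti}, because that statement lives at the level of Betti numbers, while both $\|\phi\|_{\sigma}$ and $\|\phi\|_T$ are torsion-level invariants of complexes that are \emph{acyclic} over the relevant Ore localizations (for free-by-cyclic and fibered $3$-manifold groups all agrarian Betti numbers of $C_\ast(EG)$ with respect to the rationalized representations vanish, cf.\ \cref{prop. mapping torus and vanishing betti number} and \cref{cor. fxz}). Consequently your proposed repairs do not work: applying the twisted Betti identity to finite-index subgroups containing $\ker\phi$ only produces more vanishing dimensions and cannot ``pin down linear functionals'' on a torsion polytope, and the matrix $(\sigma\otimes_{\Z}q)(M)$ over the commutative field $\C(Y)$ is neither $M\otimes I_n$ nor a submatrix of anything over $\D_G(Y)$, so ``sub-determinants can only decrease the polytope'' is not an available move. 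What the paper actually uses is a determinant-level refinement of the twisting statement: \cref{thm. extension} extends $\sigma\otimes_{\C}\tau$ to a ring homomorphism $\ts\colon\D_G\to M_n(\D_G)$ (via Hughes-freeness and the complexity induction), and \cref{lem. extension and degree} shows $\deg_s\Det\ts(A)=n\cdot\deg_s\Det A$; this gives the exact $n$-fold scaling \emph{over $\D_G(Y)$}.

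The second missing ingredient is the mechanism that produces the inequality between the $\C(Y)$-side and the $\D_G(Y)$-side, and nothing in your outline supplies it. In the paper this is \cref{lem. key lem}: because free-by-cyclic and (surface)-by-cyclic groups are Lewin, $\D_G$ is the \emph{universal} field of fractions of $\C G$ (Jaikin-Zapirain), so there is a specialization from $\D_G(Y)$ to $\C(Y)$, and specialization can only decrease the $s$-degree of Dieudonn\'e determinants; combined with the scaling above this yields
\[\deg_s\Det_{\C(Y)}(\sigma\otimes_{\Z}q)(M)\;\leqslant\;\deg_s\Det_{\D_G(Y)}(\sigma\otimes_{\C}\tau\otimes_{\Z}q)(M)\;=\;n\cdot\deg_s\Det_{\D_G(Y)}(\tau\otimes_{\Z}q)(M),\]
which, fed into the explicit torsion decompositions coming from an aspherical presentation (free-by-cyclic case) or McMullen's chain complex (the $3$-manifold case), gives $\|\phi\|_{\sigma}\leqslant n\|\phi\|_T$. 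Finally, your treatment of the fibered case asserts without proof that the twisted determinant ``factors exactly'' through the untwisted one; the paper instead proves equality via Euler characteristics: when $\ker\phi$ is finitely generated it is of type $F$, and \cref{thm. euler characteristic} together with \cref{prop. euler characteristic} gives $\|\phi\|_{\sigma}=-\chi^{\sigma\otimes_{\Z}q}(\ker\phi)=-n\cdot\chi(\ker\phi)=n\cdot\|\phi\|_T$. So while your framing (agrarian maps, Linnell skew field, polytopes) matches the paper, the two load-bearing steps -- the extension theorem with its determinant degree computation, and the Lewin/universality specialization -- are absent from your argument and are not replaceable by the Betti-number theorem.
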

The $3$-manifold case of the above theorem was first proved by Friedl--Kim \cites{friedl2008twisted} by a different method.

Both the Alexander polynomial and $\ell^2$-homology form part of the unified theory of agrarian invariants; the same holds true for their twisted analogues -- both twisted $\ell^2$-homology and twisted Alexander polynomials are manifestations of \emph{agrarian invariants} of a generalized kind, whose study we initiate here.

\smallskip

\noindent\textbf{Outline of the paper.} We recall the necessary definitions and results in Section \ref{sec. prelim}. We then introduce agrarian invariants (of a generalized kind) in Section \ref{sec. agrarian notions}. We study twisted $\ell^2$-Betti numbers in Section \ref{sec. twisted l2 betti}, where Theorem \ref{thm. l2 betti fibration simple} and Corollaries \ref{cor. surface base}, \ref{cor. 3-mnfl base} are proved. The rest of the paper, i.e., Sections \ref{sec. euler characteristic} through \ref{sec. application}, are devoted to the study of the Thurston and twisted Alexander norms.

\bigskip

\noindent\textbf{Acknowledgements.} We are very grateful to Andrei Jaikin-Zapirain for pointing out a gap in an early version of this paper.

This work has received funding from the European Research Council (ERC) under the European Union’s Horizon 2020 research and innovation programme (Grant agreement No. 850930).

\section{Preliminaries}\label{sec. prelim}

\subsection{Twisted group rings}
We now recall the construction of twisted group rings, which will be used in dealing with twisted $\ell^2$-Betti numbers.

\begin{definition}
Let $R$ be an associative ring with unity. Denote the group of units of $R$ by $R^{\times}$. Let $G$ be a group and let 
\begin{align*}
    c&\colon G\rightarrow \mathrm{Aut}(R), g\mapsto c_g,\\
    \tau&\colon G\times G\rightarrow R^{\times}, (g,g')\mapsto \tau(g,g')
\end{align*}
be functions such that
\[c_g(c_{g'}(r))=\tau(g,g')\cdot c_{gg'}(r)\cdot \tau(g,g')^{-1},\]
\[\tau(g,g')\tau(gg',g'')=c_g(\tau(g',g''))\cdot \tau(g,g'g''),\]
where $g,g',g''\in G$ and $r\in R$. The pair $(c,\tau)$ is the pair of \textit{structure functions}. We denote by $RG$ the free $R$-module with basis $G$ and write elements of $RG$ as finite $R$-linear combinations $\sum_{g\in G}r_g\ast g$ of elements of $g$. When convenient, we shorten $1\ast g$ to $g$. The structure functions endow $RG$ with the structure of an (associative) \textit{twisted group ring} by declaring
\[(r\ast g)\cdot (r'\ast g')=(r\cdot c_g(r')\cdot \tau(g,g'))\ast (gg')\]
and extending linearly.

If $c_g=\mathrm{id}_R$ and $\tau(g,g')=1$ for all $g,g'\in G$, in which case we say the structure functions $(c,\tau)$ are \textit{trivial}, then $R G$ is called the \textit{untwisted group ring}, in which case we will write elements of $RG$ as $\sum_{g\in G} rg$ instead of $\sum_{g\in G}r\ast g$, and we will also shorten $r\ast 1$ to $r$.
\end{definition}

Note that $\tau(1,1)^{-1}\ast 1$ is the multiplicative unit of the twisted group ring.

\begin{notation}
We will mostly use the notation $RG$ for a twisted group ring. However, in Section \ref{sec. rationalization} we will talk about two group ring structures on $RG$, one twisted and the other one untwisted. There, we will denote the twisted group ring by $R\ast G$ and the untwisted one by $RG$.
\end{notation}

In the sequel, we reserve the name group rings for untwisted group rings.

Since all our rings are unital, we require ring homomorphisms to respect units.

\begin{example}\label{eg. twisted group ring}
Let $\phi\colon G\rightarrow H$ be a group homomorphism with kernel $K$. We choose a set-theoretic section $s\colon H\rightarrow G$, i.e., a map between the underlying sets such that $\phi\circ s=\mathrm{id}_H$. Let $(\Z K) H$ be the twisted group ring with structure functions $c_h(r)=s(h)rs(h)^{-1}$ and $\tau(h,h')=s(h)s(h')s(hh')^{-1}$. The untwisted group ring $\Z G$ is then isomorphic to the twisted group ring $(\Z K) H$ via the map
\[g\mapsto (g(s\circ \phi)(g)^{-1})\ast \phi(g).\]
\end{example}

\subsection{Ore localization}
The notion of the Ore localization is a generalization of the classical notation of the field of fractions of an integral domain. We will use this notion to rationalize a given agrarian map in Section \ref{sec. rationalization}.

Let $R$ be a ring and let $T\subset R$ be a subset of non-zero divisors of $R$. We say $T$ satisfies the \textit{left Ore condition} if for all $r\in R$ and $t\in T$, there exist $r_1\in R,t_1\in T$ such that $r_1t=t_1r$. The \textit{(left) Ore localization of $R$ with respect to $T$} is
\[\ore(R,T)=\{t^{-1}r\mid t\in T, r\in R\}.\]

If $R$ has no non-trivial zero divisor, $T$ is the set of non-zero elements of $R$, and $T$ satisfies the left Ore condition, then we briefly say that $R$ \textit{satisfies the Ore condition} and call the Ore localization of $R$ with respect to $T$ the \textit{Ore localization} of $R$; we denote it by $\ore(R)$, and note that it is a skew field.

\begin{example}\label{eg. D(X)}
Let $D$ be a skew field and let $H$ be a torsion-free amenable group. Every twisted group ring $DH$ that is a domain satisfies the Ore condition -- this follows from \cite{tamari1957refined}, and in this form is stated for example in \cite{kielak2018bieri}*{Theorem 2.14}.
\end{example}

\subsection{Laurent power series and orders}\label{sec. laurent}
Let $R$ be a ring, let $\alpha$ be an automorphism of $R$, and let $t\not\in D$ be a symbol. The ring of \textit{twisted Laurent power series} in $t$ with coefficients in $R$ is the set
\[R(\!(t)\!)=\left\{\sum^{\infty}_{i=k}r_it^i\mid k\in\Z\right\}.\]
Our convention is $r_i=0$ for $i<k$.

The multiplication on $R(\!(t)\!)$ is given by the convention
\[t\cdot r=\alpha(r)\cdot t.\]
With this multiplication and the obvious summation $R(\!(t)\!)$ is a ring. $\alpha$ is called the \textit{twisting structure} of $R(\!(t)\!)$. If $\alpha=\mathrm{id}_R$ then $R(\!(t)\!)$ will be called the \textit{ring of (untwisted) Laurent power series} and $t$ will be called a \textit{central variable}.

For the rest of this subsection we restrict ourselves to the case where $R=D$ is a skew field and $\alpha=\mathrm{id}_D$, in which case $D(\!(t)\!)$ is a skew field.

For each $x= \sum^{\infty}_{i=k}d_i t^i \in D(\!(t)\!)$, if $d_k \neq 0$, then we define the \textit{order} of $t$ in $x$ as
\[\ord_t x = k;\]
and if $x=0$, then $\ord_t x = \infty$ by definition. It is easy to check that if $x,y\in D(\!(t)\!)\smallsetminus\{0\}$, then
\[\ord_t (xy)=\ord_t x + \ord_t y.\]
So $\ord_t$ restricts to a homomorphism $\ord_t \colon D(\!(t)\!)_{\mathrm{ab}}^{\times}\rightarrow \Z$, where $D(\!(t)\!)_{\mathrm{ab}}^{\times}$ denotes the abelianization of the group of units $D(\!(t)\!)^\times$ of $D(\!(t)\!)$. Taking $0$ into account, we can also view $\ord_t$ as a semi-group homomorphism 
\[\ord_t\colon D(\!(t)\!)_{\mathrm{ab}}^{\times} \sqcup\{0\}\rightarrow \Z\sqcup\{\infty\}.\]
Here our convention is $\infty+n=\infty$ for all $n\in\Z \sqcup \{\infty\}$.

\subsection{Degree of rational functions}\label{sec. order}
The notion of the degree serves as a convenient way to compute the Thurston norm and, more generally, the agrarian norm. Let $D$ be a skew field, let $H$ be a finite-rank free abelian group with basis $X$, and let $DH$ be the untwisted group ring. Note that $DH$ is a domain and satisfies the Ore condition by Example \ref{eg. D(X)}.

Let $t\in X$. We would like to define the $t$-degree function in this subsection, which depends on the choice of $X$. We thus use the following convention to emphasize the role played by $X$. 

\begin{notation}
We will denote $DH$ by $D[X^{\pm}]$ and the Ore localization $\ore(DH)$ by $D(X)$.
\end{notation}

For each $p\in D[X^{\pm}]\smallsetminus\{0\}$, we defined the \textit{degree} of $t$ in $p$, denoted $\deg_t p$, as the maximal power of $t$ in $p$ minus the minimal power of $t$ in $p$. Note that $\deg_t$ is not an extension of the usual notion of the degree of a polynomial.  For $p=0$, we define $\deg_t p=-\infty$. The degree function can be extended to $D(X)$: for every element $f\in D(X)$, the \textit{degree} of $t$ in $f$, denoted $\deg_t f$, is
\[\deg_t f = \deg_t p - \deg_t q,\]
where $p,q\in DH,q\neq 0$ and $q^{-1}p=f$. Here, our convention is $-\infty+n=-\infty$ for all $n\in\Z$. The well-definedness of $\deg_t f$ is an easy exercise using the Ore condition.

The order function $\ord_t$ mentioned in Section \ref{sec. laurent} can be defined for elements of $D(X)$ in the following way. First, let $K$ be the subgroup of $H$ generated by $X\smallsetminus\{t\}$ and let $E=\ore(DK)$. Then we have a natural map $\alpha\colon D(X)\hookrightarrow E(\!(t)\!)$ by expanding every rational function into a Laurent power series: Let $f\in E[t]\smallsetminus\{0\}\subset D(X)$. Factorize $f$ as $f=d t^k\cdot (1+\sum^{\ell}_{i=1}d_i  t^i)$, where $d,d_i\in E,d\neq 0$. Define
\[\alpha(f^{-1})=\left(1+\sum^{\infty}_{j=1}\left(-\sum^{\ell}_{i=1}d_i  t^i\right)^{j}\right)\cdot  d^{-1}t^{-k}.\]
Every element of $D(X)$ can be written as a fraction $q^{-1}p$ with $p,q\in E[t],q\neq 0$. Define
\[\alpha(q^{-1}p)=\alpha(q^{-1})\cdot p.\]
That $\alpha$ is well-defined follows from the universal property of the Ore localization.

The function $\alpha$ embeds $D(X)$ into $E(\!(t)\!)$ as a subfield, and so we can view each element $f\in D(X)$ as an element of $E(\!(t)\!)$ and compute $\ord_t f$. Once again, this $t$-order depends on the basis $X$.

There is a ring homomorphism $\beta\colon E[t^{\pm}]\rightarrow E[t^{\pm}]$ such that $\beta(e  t^k)=e  t^{-k}$ for all $e\in E,k\in\Z$. For $f\in D(X)$, let $p,q\in D[X]$ such that $f=q^{-1}p$ and define
\[\beta(f)=\beta(q)^{-1}\beta(p).\]
That $\beta$ is well-defined follows from the universal property of the Ore localization.

Thus, $\beta$ extends to a ring homomorphism (still denoted by) $\beta\colon E(t)\rightarrow E(t)$, which can be easily seen to be a ring automorphism. We have
\begin{equation}\label{eq. degree and order}
    \deg_t f=-\ord_t f-\ord_t \beta(f).
\end{equation}

Just like $\ord_t$, the function $\deg_t$ descends to a semi-group homomorphism
\[\deg_t\colon D(X)^{\times}_{\mathrm{ab}}\sqcup\{0\}\rightarrow \Z\sqcup\{-\infty\}.\]

\subsection{Dieudonn\'{e} determinant}

The Dieudonn\'{e} determinant is a generalization to skew fields of the classical notion of determinant over a commutative field. It is indispensable in our definition of agrarian torsion and thus we recall its definition here. Let $D$ be a skew field and let $A=(A_{ij})$ be an $n\times n$-matrix over $D$. The \textit{canonical representative of the Dieudonn\'{e} determinant} $\det^c_D A\in D$ is defined inductively as follows:
\begin{enumerate}
    \item[(1)] If $n=1$, then $\det^c_D A=a_{11}$.
    \item[(2)] If the last row of $A$ consists of zeros only, then $\det^c_D A=0$.
    \item[(3)] If $a_{nn}\neq 0$, then we form the $(n-1)\times (n-1)$-matrix $A'=(a'_{ij})$ by setting $a'_{ij}=a_{ij}-a_{in}a^{-1}_{nn}a_{ij}$ and declare $\det^c_D A=\det^c_D A'\cdot a_{nn}$.
    \item[(4)] Otherwise, let $j<n$ be maximal such that $a_{nj}\neq 0$. Let $A'$ be obtained from $A$ by interchanging rows $j$ and $n$. Then set $\det^c_D A=-\det^c_D A'$.
\end{enumerate}
The \textit{Dieudonn\'{e} determinant} $\det_D A$ of $A$ is defined to be the image of $\det^c_D A$ in $D^{\times}_{\mathrm{ab}}\sqcup\{0\}$, where $D^{\times}_{\mathrm{ab}}$ is the abelianization of the multiplicative group of $D$.

\begin{remark}
The Dieudonn\'{e} determinant satisfies the following:
\begin{enumerate}[label=(\roman*)]
    \item\label{item. d-determinant 1} $\Det_D (AB)=\Det_D A\cdot \Det_D B$ for all square matrices $A,B$ of the same dimension.
    \item\label{item. d-determinant 2} If $A'$ is obtained from $A$ by adding a multiple of a row to another row, then $\det_D A'=\det_D A$.
    \item\label{item. d-determinant 3} If $A$ is upper-triangular, then $\det_D A$ is the image of $\prod^n_{i=1} a_{ii}$ in $D^{\times}_{\mathrm{ab}}\sqcup\{0\}$.
    \item\label{item. d-determinant 4} $A$ is invertible over $D$ if and only if $\Det_D(A) \neq 0$.
\end{enumerate}
\end{remark}

\subsection{Polytope group and polytope homomorphism}\label{sec. rational function}

In this subsection, we recall the notion of the polytope homomorphism, which is used later to construct the agrarian polytope. Let $V$ be an $\R$-vector space.

\begin{definition}
A \textit{polytope} in $V$ is the convex hull of finitely many points of $V$. Given two polytopes $P,Q$, the \textit{Minkowski sum} of $P,Q$, denoted $P+Q$, is the polytope
\[\{p+q\mid p\in P, q\in Q\}.\]
\end{definition}

Let $H$ be a finite-rank free abelian group. Below, we take the vector space $V$ to be $H_1(H,\R)\cong H\otimes_{\Z}\R$.

\begin{definition}
A polytope in $H_1(H,\R)$ is called \textit{integral} if the convex hull of some finite subset of the lattice $H\subset H_1(H,\R)\cong H\otimes_{\Z}\R$. The \textit{polytope group} $\p(H)$ of $H$ is the abelian group generated by formal differences $P-Q$ of non-empty integral polytopes $P,Q$ in $H_1(H,\R)$ with addition $(P-Q)+(P'-Q')=(P+P')-(Q+Q')$ and relations $P-Q=P'-Q'$ if $P+P'=Q+Q'$.

The unit of the group is the one-vertex polytope $\{0\}$, which we will denote by $0$. Also, instead of writing $P - 0$ for a polytope $P$ we will simply write $P$, and every element of $\p(H)$ of this form will be referred to as a \emph{single} polytope.
\end{definition}

Let $D$ be a skew field and let $DH$ be the (untwisted) group ring.

\begin{definition}
The \textit{Newton polytope} $P(p)$ of an element $p=\sum_{h\in H}d_h h\in DH$ is the convex hull of the \textit{support} $\supp(p)=\{h\in H\mid d_h\neq 0\}$ in $H_1(H,\R)$.
\end{definition}

\begin{definition}
The group homomorphism
\[P\colon  (\ore(DH))^{\times}_{\mathrm{ab}}\rightarrow \p(H),~P(q^{-1}p)=P(p)-P(q)\]
is called the \textit{polytope homomorphism} of $DH$.
\end{definition}
The well-definedness of $P$ is an easy exercise using the Ore condition. That $P$ is a group homomorphism is proved in \cite{kielak2018bieri}*{Lemma 3.12}.

The following is a special case of \cite{kielak2018bieri}*{Theorem 3.14}.

\begin{theorem}
\label{thm. single polytope}
Let $A$ be a square matrix over $DH$ with $\Det_{\ore(DH)}(A)\neq 0$. Then $P(\Det_{\ore(DH)}(A))$ is a single polytope.
\end{theorem}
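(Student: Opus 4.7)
The plan is to identify the class $P(\Det_{\ore(DH)}(A))\in\p(H)$ via Novikov-ring valuations and show it equals the class of a single integral polytope. The key observation is that $\p(H)$ embeds into the group of real-valued functions on $\mathrm{Hom}(H,\R)$ via the min-support map $m\colon\p(H)\to\prod_\phi\R$ sending $P-Q$ to $\phi\mapsto\min_\phi P-\min_\phi Q$. This is well-defined and injective because a polytope is determined by its min-support function and $\min_\phi$ is additive under Minkowski sums. Under $m$, single integral polytopes correspond precisely to concave, integral piecewise linear, positively homogeneous functions on $\mathrm{Hom}(H,\R)$. It therefore suffices to show that $\phi\mapsto m_\phi(P(\Det_{\ore(DH)}(A)))$ has these four properties.

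For each non-zero $\phi\in\mathrm{Hom}(H,\R)$, I work inside the Novikov skew field $\widehat{DH}_\phi$ into which $\ore(DH)$ embeds. It carries the valuation $\nu_\phi$ extending $\sum_h d_h h\mapsto\min\{\phi(h):d_h\neq 0\}$ from $DH$. By multiplicativity $\nu_\phi$ descends to a homomorphism on $\ore(DH)^{\times}_{\mathrm{ab}}$, and one checks directly that $\nu_\phi(f)=m_\phi(P(f))$ for every $f\in\ore(DH)^{\times}$. So the problem reduces to showing that $\phi\mapsto\nu_\phi(\Det_{\ore(DH)}(A))$ is concave, integral piecewise linear, and positively homogeneous on $\mathrm{Hom}(H,\R)$.

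To compute $\nu_\phi(\Det_{\ore(DH)}(A))$ inside $\widehat{DH}_\phi$, I will triangularize $A$ via the row operations of the $\det^c$ algorithm, selecting at each step a pivot of minimal $\nu_\phi$-valuation in its column. Since elementary row operations preserve the Dieudonn\'{e} determinant, the latter is the product of the diagonal pivots and its valuation the sum of their valuations. Unfolding the induction should give, for $\phi$ in a suitable open dense subset of $\mathrm{Hom}(H,\R)$, a formula
\[\nu_\phi(\Det_{\ore(DH)}(A))=\min_\sigma\sum^n_{i=1}\nu_\phi(A_{i,\sigma(i)}),\]
where $\sigma$ ranges over permutations in $S_n$ with every $A_{i,\sigma(i)}\neq 0$. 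Each summand equals $\min_\phi P(A_{i,\sigma(i)})$, which is concave, integral piecewise linear, and positively homogeneous in $\phi$; so is each sum; and a pointwise minimum of concave functions is concave. The full expression therefore has all four required properties, and by injectivity of $m$ the element $P(\Det_{\ore(DH)}(A))$ is a single integral polytope (explicitly, the convex hull of the Minkowski sums $\sum_i P(A_{i,\sigma(i)})$ for those $\sigma$ realising the minimum on some top-dimensional cone).

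The main obstacle is establishing the matching formula for the Dieudonn\'{e} determinant over the non-commutative Novikov skew field, since the Leibniz permutation expansion is unavailable in this setting. I expect to overcome this by induction on $n$ following the recursive definition of $\det^c$: at each step, the choice of a pivot of minimal $\nu_\phi$-valuation, combined with the valuation inequalities $\nu_\phi(xy)=\nu_\phi(x)+\nu_\phi(y)$ and $\nu_\phi(x+y)\geqslant\min(\nu_\phi(x),\nu_\phi(y))$, should allow one to equate the valuation of the reduced $(n-1)\times(n-1)$ submatrix with the appropriate minimum over permutations of valuations of its entries.
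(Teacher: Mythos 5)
Your reduction of the statement to the concavity of $\phi\mapsto\nu_\phi\big(\Det_{\ore(DH)}(A)\big)$ is sound (the identification $\nu_\phi(f)=m_\phi(P(f))$ only needs multiplicativity of $\nu_\phi$ on $DH$, and a pointwise minimum of concave functions is indeed concave). The problem is the step you yourself flag as the main obstacle: the formula $\nu_\phi(\Det_{\ore(DH)}(A))=\min_\sigma\sum_i\nu_\phi(A_{i,\sigma(i)})$ is not merely hard to prove, it is false, and not only on a small set of characters. Take $D=\C$, $H=\Z=\langle t\rangle$ and
\[A=\begin{pmatrix}1&1\\1&1+t\end{pmatrix},\]
so $\Det_{\ore(DH)}(A)=t$ and $\nu_\phi(\Det A)=\phi(t)$, whereas $\min_\sigma\sum_i\nu_\phi(A_{i,\sigma(i)})=\min\{0,\phi(t)\}$. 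The two sides disagree on the entire open half-space $\{\phi(t)>0\}$. The cause is exactly the cancellation your pivoting scheme cannot rule out: the lowest-order terms cancel ($1\cdot1-1\cdot1=0$), and this is dictated by the algebra of the entries, not by any non-genericity of $\phi$; moreover both sides of your formula are continuous in $\phi$, so agreement on a dense subset would force agreement everywhere, and hence no ``suitable open dense subset'' can exist. For the same reason your closing description of the polytope as the convex hull of the Minkowski sums $\sum_iP(A_{i,\sigma(i)})$ over minimizing $\sigma$ is wrong: in the example it produces the segment $[0,1]$ while the true polytope is the single vertex $\{1\}$. The permutation expression only gives a lower bound for $\nu_\phi(\Det A)$, and the concavity of $\phi\mapsto\nu_\phi(\Det A)$ is precisely the hard content of the theorem, which your argument never reaches.

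For comparison: the paper does not prove this statement at all; it quotes it as a special case of \cite{kielak2018bieri}*{Theorem 3.14}, whose proof is genuinely different and in particular does not proceed through a Leibniz/tropical expansion of the Dieudonn\'e determinant. (A further, more minor, point to tighten if you pursue the valuation-theoretic framing: for characters $\phi$ with non-trivial kernel on $H$ the Novikov completion is not a skew field, so the embedding $\ore(DH)\hookrightarrow\widehat{DH}_\phi$ needs the two-step localization the paper uses in its Section on degrees of rational functions, or else one should work directly with the multiplicative map $\nu_\phi$ on $\ore(DH)^{\times}_{\mathrm{ab}}$ as your own computation in fact only requires.)
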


\subsection{A lemma about matrices over skew fields}
In this subsection, we prove a technical lemma about matrices, which will be used several times in the sequel. Let $D$ be a skew field and let $t$ be a central variable. Let $D(\!(t)\!)$ be the ring of Laurent power series of $t$ with coefficient in $D$. Consider a matrix $M$ of the form
\[M=\mathrm{Id}+N\cdot t,\]
where $N=(n_{ij})\in M_n(D(\!(t)\!))$ and each entry of $N$ has $t$-order at least $0$, i.e., $\ord_t n_{ij}\geqslant 0$ for all $i,j$. In order to compute $\det_{D(\!(t)\!)}M$, we use the following process  to turn $M$ into an upper-triangular matrix:

\begin{enumerate}
    \item[$(\ast)$] This process consists of $n$ steps. In the ${i}^{\mathrm{th}}$ step, we use elementary row operations to eliminate all $(j,i)$-entries with $j>i$. In other words, for each $j>i$, we add a suitable left multiple of the ${i}^{\mathrm{th}}$ row to the ${j}^{\mathrm{th}}$ row so that the resulting matrix has $0$ as its $(j,i)$-entry.
\end{enumerate}

Note that in order to carry out the $(\ast)$ process, at the $i^{\mathrm{th}}$ step we need the resulting matrix to have a non-zero $(i,i)$-entry, which is a priori unclear and thus it is a priori unclear whether the $n$ steps in process $(\ast)$ can all be carried out. The following lemma affirms the feasibility of $(\ast)$.

\begin{lemma}\label{lem. matrix reduction}
After each step of process $(\ast)$, we will get a matrix $M'=\mathrm{Id}+N'\cdot t$, where each entry of $N'$ has $t$-order at least $0$. In particular, $M'$ has non-zero diagonal entries, and thus all $n$ steps of process $(\ast)$ can be carried out and the matrix obtained from process $(\ast)$, say $\overline M=(\overline m_{ij})$, is upper-triangular and we have for all $i$, $\overline m_{ii}=1+\overline n_{ii}\cdot t$, where $\ord_t \overline n_{ii}\geqslant 0$. In particular, $M$ is invertible.
\end{lemma}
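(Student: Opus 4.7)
The plan is to proceed by induction on the step index in process $(\ast)$. At stage $i\geqslant 1$, I assume that the previous stages have transformed $M$ into a matrix $M^{(i-1)}=\mathrm{Id}+N^{(i-1)}\cdot t$ whose entries of $N^{(i-1)}$ all have $t$-order at least $0$; the base case $i=1$ is the hypothesis on $M$. The goal is to show that stage $i$ can be carried out and that the outcome $M^{(i)}$ is of the same shape.

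The key observation is that, under the inductive hypothesis, the diagonal entry $m^{(i-1)}_{ii}=1+n^{(i-1)}_{ii}\cdot t$ has constant term $1$, so it is a unit in $D(\!(t)\!)$; in fact its inverse is the geometric series $\sum_{k\geqslant 0}\bigl(-n^{(i-1)}_{ii}\cdot t\bigr)^{k}$, which converges in $D(\!(t)\!)$ because the summands have strictly increasing $t$-orders, and which itself has $t$-order exactly $0$. Consequently the left multiplier used at stage $i$ to clear the $(j,i)$-entry, namely $c_j = m^{(i-1)}_{ji}\cdot (m^{(i-1)}_{ii})^{-1}$, has $t$-order at least $1$: the factor $m^{(i-1)}_{ji}$ is off-diagonal and thus of the form $n^{(i-1)}_{ji}\cdot t$, while the inverse factor contributes order $0$.

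With $c_j$ in hand, I would update the $j$-th row by $m^{(i)}_{jk}=m^{(i-1)}_{jk}-c_j\cdot m^{(i-1)}_{ik}$ and just read off the $t$-orders in each case. The case $k=i$ produces $0$ by construction. For $k\neq i$ with $k\neq j$, both summands have $t$-order at least $1$, so the result is of the form $n^{(i)}_{jk}\cdot t$ with $n^{(i)}_{jk}$ of non-negative $t$-order. The diagonal case $k=j$ is identical except that the $1$ from $m^{(i-1)}_{jj}$ is preserved. This closes the induction and yields the desired form of $M^{(i)}$; it also shows that the zeros produced in earlier columns are not destroyed by subsequent stages, because the relevant entries of row $i$ in those columns are already $0$.

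For the final assertion, after $n$ steps the matrix $\overline M$ is upper-triangular with diagonal entries of the form $1+\overline n_{ii}\cdot t$, each of $t$-order $0$ and hence a unit of $D(\!(t)\!)$, so $\overline M$ is invertible. Since $\overline M$ differs from $M$ by left multiplication by a product of elementary unipotent matrices, $M$ itself is invertible over $D(\!(t)\!)$. I expect no substantive obstacle here beyond careful bookkeeping of $t$-orders; the one small point worth verifying explicitly is the convergence of the geometric-series inverse above, which is immediate from the completeness of $D(\!(t)\!)$ in its $t$-adic topology.
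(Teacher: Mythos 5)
Your proof is correct and follows essentially the same route as the paper's: an induction on the step index, using that the off-diagonal entries have $t$-order at least $1$ and the diagonal entries $1+n_{ii}\cdot t$ have order-$0$ inverses, so the correction term $m_{ji}(m_{ii})^{-1}m_{ik}$ has order at least $1$ and the shape $\mathrm{Id}+N'\cdot t$ is preserved. Your explicit remarks on the geometric-series inverse, the preservation of earlier zeros, and the final deduction of invertibility are points the paper leaves implicit, but they are the same argument.
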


\begin{proof}
First, after step $0$ (that is, before we do anything), the lemma is clear.

Suppose after step $i-1$, we get a matrix $M'=(m'_{jk})=\mathrm{Id}+N'\cdot t$, where each entry of $N'$ has $t$-order at least $0$. Consider step $i$. Let $j>i$. Step $i$ turns each entry $m'_{jk}$ into
\[m''_{jk}=m'_{jk}-m'_{ji}(m'_{ii})^{-1}m'_{ik}.\]

Note that $\ord_t (m'_{ji}(m'_{ii})^{-1}m'_{ik})\geqslant 1$. If $j\neq k$ then $\ord_t m'_{jk}\geqslant 1$ and thus $\ord_t m''_{jk}\geqslant 1$. If $j=k$ then $m'_{jk}=1+n'_{jk}\cdot t$ for some $n'_{jk}\in D(\!(t)\!)$ with $\ord_t n'_{jk}\geqslant 0$, and thus $m''_{jk}=1+n''_{jk}\cdot t$ for some $n''_{jk}\in D(\!(t)\!)$ with $\ord_t n''_{jk}\geqslant 0$.
\end{proof}

\subsection{Behavior of the degree function under representations of skew fields}\label{sec. matrix reduction}
Let $D$ be a skew field and let $t$ be a central variable. Let $n\in \mathbb{N}^+$ and suppose that there is a ring homomorphism $\sigma\colon D\rightarrow M_n(D)$. We extend $\sigma$ to a ring homomorphism (still denoted by) $\sigma\colon D(\!( t )\!) \rightarrow M_n(D(\!( t )\!))$ by
\[\sigma\left( \sum^{\infty}_{i=k} d_it^i \right)=\sum^{\infty}_{i=k}\sigma(d)\cdot t^i.\]

The goal of this subsection is:

\begin{lemma}\label{lem. extension and degree}
For every $A \in M_k(D [t^\pm])$ we have
\[\deg_t\Det_{D(t)}\sigma(A)=n\cdot \deg_t\Det_{D(t)}A,\]
where $\sigma(A)$ is the matrix obtained by applying $\sigma$ to every entry of $A$.
\end{lemma}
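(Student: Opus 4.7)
My plan is to reduce the identity to a scalar computation via Gaussian elimination over $D(t)$, and then to handle the scalar case using formula~\eqref{eq. degree and order} together with Lemma~\ref{lem. matrix reduction}.

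First, if $\Det_{D(t)} A = 0$ (equivalently $A$ is not invertible over $D(t)$), then a non-zero null vector $v \in D(t)^k$ of $A$ produces a non-zero $nk \times n$ matrix $V$ with blocks $\sigma(v_i)$ that is annihilated by $\sigma(A)$, so $\Det_{D(t)} \sigma(A) = 0$ and both sides equal $-\infty$. Assuming now $\Det_{D(t)} A \neq 0$, I would use Gaussian elimination over the skew field $D(t)$ to factor $A = E \cdot U$, with $E$ a product of elementary matrices and row swaps and $U$ upper triangular. Multiplicativity of the Dieudonn\'e determinant gives $\Det_{D(t)} A = \pm \prod_i U_{ii}$ in the abelianization. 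Under $\sigma$, the image of an elementary matrix $I + x\, e_{ij}$ ($i\neq j$) is a block-elementary matrix whose single non-zero off-diagonal block can be cleared by $n$ ordinary row operations (so it has Dieudonn\'e determinant $1$), the image of a row swap is a block swap contributing a factor of $\pm 1$, and $\sigma(U)$ is block upper triangular so its Dieudonn\'e determinant is the product of the determinants of the diagonal blocks $\sigma(U_{ii})$. Applying $\deg_t$ therefore reduces the lemma to the scalar statement
\[\deg_t \Det_{D(t)} \sigma(d) = n \cdot \deg_t d \quad\text{for all } d \in D(t).\]

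For the scalar case, writing $d = q^{-1} p$ with $p, q \in D[t^\pm]$ and invoking multiplicativity further reduces to $d \in D[t^\pm]$. Here I would apply \eqref{eq. degree and order}, $\deg_t f = -\ord_t f - \ord_t \beta(f)$; since $\beta$ commutes with $\sigma$ entrywise and preserves $D[t^\pm]$, it suffices to show that $\ord_t \Det_{D(\!(t)\!)} \sigma(d) = n \cdot \ord_t d$ for every $d \in D[t^\pm]$. Writing $d = \sum_{i=m}^M d_i t^i$ with $d_m \neq 0$, the crucial point is that $d_m$ is a unit of the skew field $D$, so $\sigma(d_m) \in M_n(D)$ is invertible; hence I can factor
\[\sigma(d) = \sigma(d_m) \cdot t^m \cdot (I + N \cdot t),\]
where the entries of $N \in M_n(D[t])$ have non-negative $t$-order. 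Lemma~\ref{lem. matrix reduction} then gives $\ord_t \Det(I + N t) = 0$, and since $\Det \sigma(d_m) \in D^{\times}_{\mathrm{ab}}$ has $\ord_t = 0$ while $\Det(t^m I_n) = t^{mn}$ has $\ord_t = m n$, one concludes $\ord_t \Det \sigma(d) = m n$, as required.

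The main obstacle is the bookkeeping in the first step: verifying that the Dieudonn\'e determinant passes cleanly through $\sigma$ applied to elementary matrices, block swaps, and block upper triangular matrices. These facts are routine but require unpacking the inductive definition of the Dieudonn\'e determinant and tracking the effect of row operations at the block level; once they are in place, the power-series computation in the second step is an immediate application of Lemma~\ref{lem. matrix reduction}.
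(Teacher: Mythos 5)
Your proposal is correct and takes essentially the same route as the paper: row-reduce $A$ over $D(t)$ to an upper-triangular matrix, use multiplicativity of the Dieudonn\'e determinant, the fact that $\sigma$ of an elementary matrix has determinant $\pm 1$, and the block-triangular structure of the image, thereby reducing to the scalar identity, which is then proved exactly as in the paper via equation \eqref{eq. degree and order} and the order computation coming from the factorization $d=d_m t^m(1+\cdots)$ together with Lemma \ref{lem. matrix reduction}. The only differences (performing the matrix reduction before the scalar case, explicitly treating the singular case, and passing from $D(t)$ to $D[t^{\pm}]$ by clearing denominators) are cosmetic.
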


Before the proof of Lemma \ref{lem. extension and degree}, we state a useful corollary.

\begin{corollary}\label{cor. ring homo extension}
The representation $\sigma$ extends to a ring homomorphism $D(t)\rightarrow M_n(D(t))$.
\end{corollary}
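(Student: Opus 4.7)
The plan is to invoke the universal property of the Ore localization $D(t) = \ore(D[t^{\pm}])$. First I would observe that the already-constructed extension $\sigma \colon D(\!(t)\!) \to M_n(D(\!(t)\!))$ sends the subring $D[t^{\pm}]$ into $M_n(D[t^{\pm}])$, since a Laurent polynomial $\sum d_i t^i$ is mapped to the Laurent polynomial matrix $\sum \sigma(d_i)t^i$. Composing with the inclusion $M_n(D[t^{\pm}]) \hookrightarrow M_n(D(t))$ gives a ring homomorphism $\sigma \colon D[t^{\pm}] \to M_n(D(t))$. Since $D[t^{\pm}]$ is an Ore domain with $D(t)$ as its Ore localization, this extends (uniquely) to $D(t) \to M_n(D(t))$ provided every non-zero element of $D[t^{\pm}]$ is sent to an invertible element of $M_n(D(t))$.

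So the key step is to verify invertibility of $\sigma(p)$ in $M_n(D(t))$ for every non-zero $p \in D[t^{\pm}]$. Here I would use property \ref{item. d-determinant 4} of the Dieudonn\'e determinant: $\sigma(p)$ is invertible over $D(t)$ if and only if $\Det_{D(t)} \sigma(p) \neq 0$. Viewing $p$ as a $1 \times 1$ matrix we have $\Det_{D(t)} p = p$, so Lemma \ref{lem. extension and degree} applied with $k=1$ yields
\[\deg_t \Det_{D(t)} \sigma(p) = n \cdot \deg_t \Det_{D(t)} p = n \cdot \deg_t p.\]
For non-zero $p \in D[t^{\pm}]$, $\deg_t p$ is a non-negative integer (the spread between the highest and lowest powers of $t$), hence $\deg_t \Det_{D(t)} \sigma(p) \geqslant 0 > -\infty$, forcing $\Det_{D(t)} \sigma(p) \neq 0$. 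Thus $\sigma(p)$ is invertible in $M_n(D(t))$, as required.

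I do not anticipate any substantive obstacle: the corollary is essentially just a repackaging of Lemma \ref{lem. extension and degree}, whose content is precisely designed to make this invertibility claim automatic. The only minor care needed is in the bookkeeping of the universal property of Ore localizations and the identification of the Dieudonn\'e determinant of a $1 \times 1$ matrix with its entry (mod abelianization).
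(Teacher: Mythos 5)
Your proposal is correct and follows essentially the same route as the paper: invertibility of $\sigma(p)$ for non-zero $p$ via Lemma \ref{lem. extension and degree} with $k=1$ (the degree being finite forces $\Det_{D(t)}\sigma(p)\neq 0$), followed by the universal property of the Ore localization. The only cosmetic difference is that the paper factors the extension through $\ore(M_n(D[t]),T)$ and then verifies that this localization equals $M_n(D(t))$, whereas you apply the universal property directly with target $M_n(D(t))$; both amount to the same argument.
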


\begin{proof}
By Lemma \ref{lem. extension and degree} applied with $k=1$, the matrix $\sigma(p)$ is invertible in $M_n(D(t))$ for every non-zero $p\in D[t]$. By the universal property of the Ore localization, $\sigma$ extends to a ring homomorphism 
\[D(t)=\ore(D[t])\rightarrow \ore(M_n(D[t]),T)=M_n(D(t)),\]
where $T\subset M_n(D[t])$ is the set of non-zero divisors. Here, the last equality follows from the universal property of the Ore localization and the following two observations about the natural inclusion $M_n(D[t])\hookrightarrow M_n(D(t))$:
\begin{enumerate}
    \item[(i)] Every matrix $A\in T$ is invertible in $M_n(D(t))$. Indeed, suppose $A$ is not invertible over $M_n(D(t))$. Then there is a non-zero matrix $B\in M_n(D(t))$ such that $B\cdot A=0$. Using the Ore condition, one can find a non-zero element $p\in D[t]$ such that $p\cdot B$ is a matrix over $D[t]$. But then $A$ is a non-trivial zero divisor as $p\cdot B\cdot A=0$, a contradiction.
    
    \item[(ii)] Every ring homomorphism $\phi\colon M_n(D[t])\rightarrow R$ that maps every element of $T$ to an invertible element uniquely extends to a ring homomorphism $M_n(D(t))\rightarrow R$. Indeed, for every matrix $A\in M_n(D(t))$, there exists a non-zero element $p\in D[t]$ such that $p\cdot A$ is a matrix over $D[t]$. Note that $p\cdot \mathrm{Id}\in T$ and thus $\phi(p\cdot \mathrm{Id})$ is invertible in $R$. So if $\phi$ can be extended to $M_n(D(t))$, it has to map $A$ to $\phi(p\cdot \mathrm{Id})^{-1}\cdot \phi(p\cdot A)$, which shows uniqueness. To extend $\phi$, define $\phi(A)=\phi(p\cdot \mathrm{Id})^{-1}\cdot \phi(p\cdot A)$. We have to check that this is well defined. Assume that there is another non-zero element $q\in D[t]$ such that $q\cdot A\in M_n(D[t])$. Then by the Ore condition there exist $r,s\in D[t], s\neq 0$ such that $rp=sq$. In particular we have $rp\neq 0$ and also $r\neq 0$. We have
    \begin{align*}
        \phi(p\cdot\mathrm{Id})^{-1}\cdot\phi(p\cdot A)&=\phi(p\cdot\mathrm{Id})^{-1}\cdot \phi(r\cdot\mathrm{Id})^{-1}\cdot\phi(rp\cdot A)\\
        &=\phi(rp\cdot\mathrm{Id})^{-1}\cdot \phi(rp\cdot A)\\
        &=\phi(sq\cdot\mathrm{Id})^{-1}\cdot \phi(sq\cdot A)\\
        &=\phi(q\cdot\mathrm{Id})^{-1}\cdot \phi(s\cdot\mathrm{Id})^{-1}\cdot\phi(sq\cdot A)\\
        &=\phi(q\cdot\mathrm{Id})^{-1}\cdot\phi(q\cdot A).\qedhere
    \end{align*}
\end{enumerate}
\end{proof}

The lemma below is the first step towards proving Lemma \ref{lem. extension and degree}.

\begin{lemma}\label{lem. extension and order}
For all $z\in D(\!( t )\!)$,
\[ \ord_t \Det_{D(\!( t )\!)} \sigma(z) = n\cdot \ord_t z.\]
\end{lemma}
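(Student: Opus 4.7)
The plan is to reduce the general statement to three easily-computable pieces by factoring out the leading monomial of $z$. If $z=0$, then $\sigma(z)=0$, so $\Det_{D(\!(t)\!)}\sigma(z)=0$ and both sides equal $\infty$ under the convention $n\cdot\infty=\infty$. From now on assume $z\neq 0$, set $k=\ord_t z$, and write
\[ z = t^k\cdot(c_0 + c_1 t + c_2 t^2 + \cdots) = t^k\cdot c_0\cdot v,\qquad v = 1 + w\cdot t, \]
where $c_0\in D^{\times}$ (so in particular $c_0$ is a unit of $D$) and $w\in D(\!(t)\!)$ satisfies $\ord_t w\geqslant 0$.

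Since $t$ is central in $D(\!(t)\!)$ and $\sigma$ was extended coefficient-wise, $\sigma(t^k) = t^k\cdot\mathrm{Id}_n$, and the factorisation above together with multiplicativity of $\sigma$ and of the Dieudonn\'e determinant yields
\[\Det_{D(\!(t)\!)}\sigma(z) = \Det_{D(\!(t)\!)}(t^k\cdot\mathrm{Id}_n)\cdot\Det_{D(\!(t)\!)}\sigma(c_0)\cdot\Det_{D(\!(t)\!)}\sigma(v).\]
I would then compute $\ord_t$ of each factor. The first is immediate from property \ref{item. d-determinant 3} of the Dieudonn\'e determinant: the diagonal matrix $t^k\mathrm{Id}_n$ has determinant equal to the class of $t^{nk}$, so $\ord_t = nk$. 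For the second, $c_0\in D^{\times}$ implies $\sigma(c_0)\in M_n(D)$ is invertible (with inverse $\sigma(c_0^{-1})$), hence $\Det\sigma(c_0)\in D^{\times}$, which has $t$-order $0$. For the third, note that $\sigma(v) = \mathrm{Id}_n + \sigma(w)\cdot t$, and every entry of $\sigma(w)$ has $t$-order at least $0$; this puts $\sigma(v)$ exactly in the hypothesis of \cref{lem. matrix reduction}. That lemma reduces $\sigma(v)$ via row operations to an upper-triangular matrix whose diagonal entries are of the form $1 + \overline n_{ii}\cdot t$ with $\ord_t\overline n_{ii}\geqslant 0$. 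Invoking properties \ref{item. d-determinant 2} and \ref{item. d-determinant 3}, I conclude
\[\Det_{D(\!(t)\!)}\sigma(v) = \text{class of}\ \prod_{i=1}^n(1 + \overline n_{ii}\cdot t),\]
and each factor of this product has $\ord_t = 0$, so the whole product does as well. Summing the three contributions yields $\ord_t\Det_{D(\!(t)\!)}\sigma(z) = nk = n\cdot\ord_t z$, as required.

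There is no single hard step; the only point requiring a small amount of care is checking that $\sigma(v)$ really satisfies the hypothesis of \cref{lem. matrix reduction}, i.e.\ that it has the shape $\mathrm{Id}_n + N\cdot t$ with $\ord_t$-nonnegative entries in $N$. This is transparent from the coefficient-wise definition of the extension of $\sigma$ and from $\ord_t w\geqslant 0$, and it is the only place where one needs the technical matrix lemma proved in the previous subsection. Everything else is formal multiplicativity of the Dieudonn\'e determinant and the fact that $t$ is central, so the extension $\sigma$ commutes with $t$-powers.
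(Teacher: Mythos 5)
Your proof is correct and follows essentially the same route as the paper: factor out the leading monomial $z=t^k c_0(1+wt)$, use invertibility of $\sigma(c_0)$ over $D$, compute $\ord_t\Det(t^k\mathrm{Id}_n)=nk$ directly, and handle the factor $\mathrm{Id}_n+\sigma(w)t$ via \cref{lem. matrix reduction}. Your write-up even spells out the last step (determinant equals the class of the product of the diagonal entries $1+\overline n_{ii}t$, each of order $0$) slightly more explicitly than the paper does.
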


\begin{proof}
The lemma is trivial if $z=0$. So let $z=\sum^{\infty}_{i=\ell}d_i t^i\in D(\!( t )\!)\smallsetminus\{0\}$, where $d_i\in D$ and $d_{\ell}\neq 0$. Then
\[z=d_{\ell}\cdot t^{\ell}\cdot\left(\sum^{\infty}_{i=\ell}d^{-1}_{\ell} d_i t^{i-\ell}\right).\]

Thus
\[
    \ord_t\Det_{D(\!( t )\!)}\sigma(z)=\ord_t\Det_{D(\!( t )\!)}\sigma(d_{\ell})+\ord_t\Det_{D(\!( t )\!)}\sigma(t^{\ell})+ \ord_t\Det_{D(\!( t )\!)}\sigma\left(\sum^{\infty}_{i=\ell}d^{-1}_{\ell} d_i t^{i-\ell}\right).
\]

Since $d_{\ell}\neq 0$, $\sigma(d_{\ell})$ is an invertible matrix over $D$. Thus,
\[\ord_t\Det_{D(\!( t )\!)}(\sigma(d_{\ell}))=0.\]
By Lemma \ref{lem. matrix reduction}, we have
\[\ord_t\Det_{D(\!( t )\!)}\sigma\left(\sum^{\infty}_{i=\ell}d^{-1}_{\ell} d_i t^{i-\ell}\right)=0.\]
Note also that
\[\ord_t\Det_{D(\!( t )\!)}(\mathrm{Id}_n\cdot t^{\ell})=n\ell.\]
We thus have $\ord_t\Det_{D(\!( t )\!)}\sigma(z)=n\ell=n\cdot\ord_t z$, as desired.
\end{proof}

\begin{proof}[Proof of Lemma \ref{lem. extension and degree}]
First consider $z\in D(t)$. Let $\alpha\colon D(t)\rightarrow D(\!( t )\!)$ be the embedding given in Section \ref{sec. order} and let $z'=\Det^c_{D(t)}\sigma(z)$ be the canonical representative of the Dieudonn\'{e} determinant. Then
\[\ord_t\Det_{D(t)}\sigma(z)=\ord_t z'=\ord_t \alpha(z')=\ord_t\Det_{D(\!( t )\!)}\sigma(z)=n\cdot \ord_t z,\]
where the last equality follows from Lemma \ref{lem. extension and order}.

Let $\beta\colon D(t) \rightarrow D(t)$ be the ring automorphism constructed in Section \ref{sec. order}. We also view $\beta$ as an automorphism of the semi-group $(D(t))^{\times}_{\mathrm{ab}}\sqcup\{0\}$. We have
\[
    \ord_t\beta(\Det_{D(t)}\sigma(z))=\ord_t\Det_{D(t)}\beta(\sigma(z))
    =\ord_t\Det_{D(t)}\sigma(\beta(z))=n\cdot \ord_t\beta(z),
\]
where the third equality follows from Lemma \ref{lem. extension and order}.

Equation \eqref{eq. degree and order} then implies
\begin{equation}\label{eq. 1x1}
    \deg_t\Det_{D(t)}\sigma(z)=n\cdot \deg_t z.
\end{equation}

Now consider the matrix $A$. By elementary row operations over $D(t)$ we can turn $A$ into an upper-triangular matrix over $D(t)$. In more details, there are elementary matrices $U_1,\dots, U_\kappa\in M_k\big(D(t)\big)$ whose diagonal entries are all $\pm 1$ such that $B=(\prod^\kappa_{i=1}U_i)A\in D(t)$ is an upper-triangular matrix. So
\[\deg_t\Det_{D(t)} B=\deg_t(\pm\Det_{D(t)}A)=\deg_t\Det_{D(t)} A.\]

The matrix $\sigma(B)$ is a block-wise upper-triangular matrix. Note that $\Det_{D(t)}\sigma(U_i)=\pm 1$ for all $i$. Thus
\[\Det_{D(t)}\sigma(B)=\left(\prod^k_{i=1}\Det_{D(t)}\sigma(U_i)\right)\cdot\Det_{D(t)}\sigma(A)=\pm\Det_{D(t)}\sigma(A).\] 
Since $\Det^c_{D(t)} B$ is the product of its diagonal entries, from equation \eqref{eq. 1x1} we see that 
\[\deg_t\Det_{D(t)}\sigma(B)=n\cdot \deg_t\Det_{D(t)}B=n\cdot \deg_t\Det_{D(t)} A.\qedhere\]
\end{proof}

\subsection{Locally indicable groups and Linnell skew fields}
A group $G$ is \textit{locally indicable} if every non-trivial finitely generated subgroup of $G$ admits an epimorphism onto $\Z$.

\begin{example}\label{eg. locally indicable}
All free groups are locally indicable. More generally, except for the fundamental group of the projective plane $P^2$, all surface groups are locally indicable. Indeed, suppose $S\neq P^2$ is a surface. If $S$ is not closed, then $\pi_1(S)$ is free and thus is locally indicable. If $S=S^2$ is the $2$-sphere, then $\pi_1(S)$ is of course locally indicable. In all other cases $b_1(S)\neq 0$ and thus $\pi_1(S)$ has a surjection onto $\Z$. The kernel $K$ of this surjection corresponds to an infinite cyclic cover of $S$ and thus is a free group. So $\pi_1(S)$ is a semi-direct product $\pi_1(S)=K\rtimes \Z$ with $K$ free, and thus is locally indicable.
\end{example}

To obtain more examples of locally indicable groups we briefly recall some notions from the theory of $3$-manifolds. For details the reader is referred to the book \cites{afw2015three}. Let $M$ be a compact connected orientable irreducible $3$-manifold with empty or toroidal boundary. $M$ is called \textit{non-positively curved} if there is a Riemannian metric on the interior of $M$ with non-positive sectional curvature. $M$ is called \textit{geometric} if $M$ supports one of the geometries $S^3,S^2\times\mathbb{R}^1,\mathbb{R}^3,\mathrm{NIL},\mathrm{SOL},\widetilde{\mathrm{SL}_2(\mathbb{R})},\mathbb{H}^3$, $\mathbb{H}^2\times\mathbb{R}$. Note that if $M$ has non-empty boundary then $M$ supports either $\mathbb{H}^3$ or $\mathbb{H}^2\times\mathbb{R}$

\begin{lemma}\label{lem. eg of locally indicable}
Suppose that $M$ is a compact connected orientable irreducible $3$-manifold with empty or toroidal boundary. If $M$ either has positive virtual first Betti number or is non-positively curved or is geometric, then $\pi_1(M)$ is virtually locally indicable, i.e., $\pi_1(M)$ has a finite-index locally indicable subgroup.
\end{lemma}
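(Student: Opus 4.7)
The plan is to reduce each of the three hypotheses to one of two ingredients: (i) Howie's theorem, stating that the fundamental group of a compact orientable irreducible $3$-manifold with positive first Betti number is locally indicable; and (ii) the elementary fact that every torsion-free polycyclic group is locally indicable (and that a virtually polycyclic group is virtually torsion-free by a Selberg-style argument).

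Suppose first that $b_1^{\mathrm{vir}}(M) > 0$. I pass to a finite cover $M' \to M$ with $b_1(M') \geqslant 1$; the manifold $M'$ is still orientable and still irreducible (a finite cover of an orientable irreducible $3$-manifold is irreducible, by Meeks--Simon--Yau or the equivariant sphere theorem). Howie's theorem then gives that $\pi_1(M')$ is locally indicable, so $\pi_1(M)$ is virtually locally indicable. If Howie's theorem needs to be reproved rather than cited, the key step is to combine Scott's compact core theorem (realising every nontrivial finitely generated subgroup of $\pi_1(M')$ as the fundamental group of a compact irreducible $3$-submanifold of the corresponding cover) with the intersection pairing between $H_1$ and $H_2$ that becomes available once $b_1(M') \geqslant 1$, yielding a surjection onto $\Z$.

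If $M$ is non-positively curved, then by the work of Agol, Wise, and Przytycki--Wise (surveyed in \cite{afw2015three}), $M$ virtually fibers over $S^1$ and in particular $b_1^{\mathrm{vir}}(M) \geqslant 1$, which reduces this case to the previous one.

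Finally, if $M$ is geometric, I would go through the eight Thurston geometries in turn. For $\mathbb{H}^3$, $\mathbb{H}^2 \times \R$, and $\widetilde{\mathrm{SL}_2(\R)}$, a suitable finite cover has positive first Betti number (for $\mathbb{H}^3$ by Agol--Wise; for the two Seifert-fibered cases because a finite cover is a circle bundle over a hyperbolic surface, which has arbitrarily large $b_1$), and again the first case applies. For $\R^3$, $\mathrm{NIL}$, and $\mathrm{SOL}$, the group $\pi_1(M)$ is virtually torsion-free polycyclic, hence virtually locally indicable by (ii). For $S^3$, $\pi_1(M)$ is finite and so the trivial subgroup is a locally indicable subgroup of finite index; for $S^2 \times \R$, $\pi_1(M)$ is virtually cyclic, hence again virtually locally indicable. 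The main obstacle is really the citation trail, since both Howie's theorem and the virtually fibering theorem for non-positively curved $3$-manifolds are deep external inputs; once those are in hand, no essentially new idea is required.
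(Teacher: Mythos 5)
Your first case is essentially the paper's argument packaged as a citation of Howie, and your reduction of the non-positively curved and hyperbolic/Seifert geometric cases to the positive-virtual-$b_1$ case via virtual fibering is a legitimate alternative to the paper's route (which instead passes through virtual bi-orderability and Levi's theorem); both rest on the same virtually-special machinery. However, your ingredient (ii) is false: it is not true that every torsion-free polycyclic group is locally indicable. The fundamental group of the Hantzsche--Wendt flat $3$-manifold is torsion-free (Bieberbach), polycyclic (it is solvable and virtually $\Z^3$), and has finite abelianization $\Z/4\oplus\Z/4$, so it is a non-trivial finitely generated group with no epimorphism onto $\Z$, hence not locally indicable. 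Worse, this counterexample sits precisely in the case where you invoke (ii): for $M$ the Hantzsche--Wendt manifold (geometry $\R^3$), the group $\pi_1(M)$ is already torsion-free polycyclic, so your argument as written would conclude that $\pi_1(M)$ itself is locally indicable, which is wrong.

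The conclusion you want (virtual local indicability in the $\R^3$, $\mathrm{NIL}$, $\mathrm{SOL}$ cases) is still true and the repair is small: instead of stopping at a torsion-free finite-index subgroup, pass to a finite-index subgroup that is poly-(infinite cyclic) --- the translation lattice $\Z^3$ in the flat case, a lattice in the Heisenberg group in the $\mathrm{NIL}$ case, and a subgroup of the form $\Z^2\rtimes\Z$ in the $\mathrm{SOL}$ case --- and then use that $\Z$ is locally indicable and that local indicability is closed under extensions. This is in effect what the paper does by quoting that in these geometries $\pi_1(M)$ is virtually free abelian or virtually $\Z\rtimes\Z$. The remaining points of your proposal (irreducibility of finite covers, the $S^3$ and $S^2\times\R$ cases, and the circle-bundle argument giving positive virtual $b_1$ for $\mathbb{H}^2\times\R$ and $\widetilde{\mathrm{SL}_2(\R)}$) are fine.
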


\begin{proof}
    First suppose that $M$ has positive virtual first Betti number. By passing to a finite-sheeted cover of $M$, we may assume $b_1(M)>0$. Let $H\leqslant \pi_1(M)$ be a finitely generated non-trivial subgroup. Then if $H$ is of finite index, then $b_1(H)\geqslant b_1(M)>0$. If $H$ is of infinite index, then the proof of \cite{howie1982locally}*{Theorem 6.1} (see also \cite{howie1985band}*{Lemma 2}) shows that $b_1(H)\geqslant 1$. So in any case $H$ has a surjection onto $\Z$.

    Suppose instead that $M$ is non-positively curved. Then the works of Agol, Duchamp, Gruenberg, Haglund, Kahn, Krob, Markovi\'c, Liu, Perelman, Przytycki, Rhemtulla, and Wise (see \cite{afw2015three}*{(G.30)} for an explanation) imply that $\pi_1(M)$ is virtually bi-orderable and thus is virtually locally indicable \cites{levi1942ordered}.
    
    Suppose that $M$ is geometric. If $M$ supports one of the geometries $S^3,S^2\times\mathbb{R}^1,\mathbb{R}^3,\mathrm{NIL},\mathrm{SOL}$, then $M$ is closed and then by \cite{groves2012recognizing}*{Lemmata 8.1, 9.2, 10.1, 11.1}, $\pi_1(M)$ is either virtually free abelian or virtually $\Z\rtimes\Z$, and thus is virtually locally indicable.
    
    If $M$ supports the $\widetilde{\mathrm{SL}_2(\mathbb{R})}$ geometry then $\pi_1(M)$ is a semi-direct product $\pi_1(M)=\Z\rtimes F$ for some non-cyclic free group $F$ and thus is locally indicable.
    
    If $M$ supports the $\mathbb{H}^3$ geometry, then it is non-positively curved and thus by the second paragraph of this proof is virtually locally indicable.
    
    In $M$ supports the $\mathbb{H}^2\times \mathbb{R}$ geometry then $\pi_1(M)$ is virtually a product $\Z\times F$ where $F$ is a non-cyclic free group, and thus is virtually locally indicable.
\end{proof}

\begin{remark}
Note that if $M$ is a compact connected orientable irreducible $3$-manifold with empty or toroidal boundary and $M$ is not a closed graph manifold, then $M$ is non-positively curved, and thus $\pi_1(M)$ is virtually locally indicable. Indeed, the non-positive curvature of $M$ follows from the resolution of the Virtually Haken Conjecture \cites{agol2013virtual}, Thurston's Hyperbolication Theorem \cites{thurston1986hyperbolic} and the work of Leeb \cites{leeb1995three}.
\end{remark}

Let $G$ be a locally indicable group. By \cite{jaikin2020strong}*{Theorem 1.1}, $G$ satisfies the Atiyah conjecture over $\C$. Let $\D_G$ be the division closure of $\C G$ in $\mathcal{U}_G$, the algebra of affiliated operators of the group von Neumann algebra of $G$. Since $G$ is obviously torsion-free, $\D_ G$ is a skew field \cite{linnell1993division}*{Theorem 1.3} and is called the \textit{Linnell skew field} of $\C G$.

\subsection{Hughes-free skew fields}
The notion of Hughes-freeness was introduced by Hughes \cite{hughes1970division} in order to prove isomorphism between certain skew fields.

\begin{definition}\label{def. specialization}
Let $R$ be a ring.  An $R$\textit{-field} consists of a skew field $D$ and a ring homomorphism $\beta\colon R\rightarrow D$. The skew field $D$ is called an \textit{epic} $R$\textit{-field} if $D$ is the skew field generated by $\beta(R)$.
\end{definition}

\begin{definition}
Let $DG$ be a twisted group ring with $D$ a skew field and $G$ a locally indicable group. An epic $DG$-field $\beta\colon DG\rightarrow E$ is \textit{Hughes-free} if for every non-trivial finitely generated subgroup $H$ of $G$, every normal subgroup $N$ of $H$ with $H/N\cong \Z$, and every $h_1,\cdots, h_n\in H$ in distinct cosets of $N$ in $H$, the sum $E_N\beta(h_1)+\cdots+E_N\beta(h_n)$ is direct, where $E_N$ is the division closure of $\beta(DN)$ in $E$, and $DN$ is the subring of $DG$ generated by $D$ and $N$.
\end{definition}

\begin{example}\label{eg. hughes free}
Let $G$ be a locally indicable group. Then its Linnell skew field is a Hughes-free $\C G$-field \cite{jaikin2020strong}*{Corollary 6.2}.
\end{example}

\begin{theorem}[\cites{hughes1970division}]\label{thm. hughes}
Let $DG$ be a twisted group ring with $D$ a skew field and $G$ a locally indicable group. Let $\beta\colon DG\rightarrow E,\beta'\colon DG\rightarrow E'$ be two Hughes-free $DG$-fields. Then there exists a ring isomorphism $\alpha\colon E\rightarrow E'$ such that $\beta'=\alpha\circ \beta$. 
\end{theorem}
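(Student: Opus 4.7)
The plan is to build $\alpha$ inductively by peeling off one $\Z$-quotient at a time, exploiting local indicability of $G$. First comes a direct-limit reduction: every element of $E$ lies in the division closure $E_H$ of $\beta(DH)$ in $E$ for some finitely generated $H\leqslant G$, and the analogous statement holds for $E'$ with its division closure $E'_H$ of $\beta'(DH)$, so it suffices to produce compatible $DH$-equivariant isomorphisms $\alpha_H\colon E_H\to E'_H$ as $H$ ranges over finitely generated subgroups of $G$, and then glue. One also verifies directly from the definition that the restriction of a Hughes-free $DG$-field to any subgroup $K\leqslant G$ is again a Hughes-free $DK$-field onto its image's division closure.

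For the inductive step, fix a finitely generated $H$ and an epimorphism $\phi\colon H\twoheadrightarrow\Z$ supplied by local indicability. Let $N=\ker\phi$ and choose $t\in H$ with $\phi(t)=1$. The Hughes-freeness of $\beta$ applied with $H$ and $N$ is precisely the assertion that the powers $\beta(t)^i$ for $i\in\Z$ are right $E_N$-linearly independent, so $\beta$ restricts to an injection of the twisted Laurent polynomial ring $E_N\ast\langle t\rangle$ (with twisting by conjugation by $t$) into $E_H$. Using the $t$-order function this twisted polynomial ring embeds into a twisted Laurent power series skew field and hence is a domain satisfying the Ore condition, so its Ore localization $\ore(E_N\ast\langle t\rangle)$ is itself a skew field. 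Since this Ore localization contains $\beta(DH)$ and sits inside $E_H$, division-closure forces $E_H=\ore(E_N\ast\langle t\rangle)$, and an identical argument identifies $E'_H$ with $\ore(E'_N\ast\langle t\rangle)$. Invoking the inductive hypothesis on $N$ to produce a $DN$-equivariant isomorphism $\alpha_N\colon E_N\to E'_N$, and observing that the twistings on both sides are induced by the same element $t\in DH$, $\alpha_N$ extends canonically to an isomorphism of twisted Laurent polynomial rings sending $t\mapsto t$, and thence by the universal property of the Ore localization to the desired $\alpha_H$.

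The main obstacle is organizing the induction, since $N$ is generally not finitely generated even when $H$ is (as happens when $H$ is non-abelian free), so a naive induction on finitely generated subgroups does not close. One option is to fix a strictly decreasing ordinal rank on locally indicable groups that drops when passing to a codimension-one normal subgroup; a cleaner alternative is to rephrase the inductive hypothesis as a statement about arbitrary locally indicable groups $N$, apply the already-handled finitely generated case to every finitely generated subgroup of $N$, and produce $\alpha_N$ as a compatible direct limit. Either way the well-foundedness argument is the heart of the matter and constitutes the substance of Hughes' original proof; the purely ring-theoretic steps identifying $E_H$ with an Ore localization of a twisted Laurent polynomial ring are, by contrast, essentially formal once Hughes-freeness is unwound.
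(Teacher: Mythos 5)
First, note that the paper does not prove this statement at all: it is quoted from Hughes' paper \cite{hughes1970division}, and the machinery the paper imports in its sections on rational semirings, the source subgroup (\cref{thm. source}) and tree complexity is exactly the apparatus of the known proofs, which it then reuses (in the same element-by-element inductive style) to prove \cref{thm. extension}. Measured against that, your purely ring-theoretic steps are fine: Hughes-freeness for the pair $(H,N)$ does give that $E_N\ast\langle t\rangle$ embeds in $E$, this twisted Laurent polynomial ring over the skew field $E_N$ is an Ore domain, its Ore field of fractions embeds in $E$ with image exactly $E_H$, and an isomorphism $\alpha_N\colon E_N\to E'_N$ with $\alpha_N\circ\beta=\beta'$ on $DN$ automatically intertwines the two conjugation twistings (two homomorphisms out of a skew field agreeing on a generating subring agree on its division closure), hence extends over the Ore localization; uniqueness of each $\alpha_H$ also makes the direct-limit glueing over finitely generated subgroups automatic.

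The genuine gap is the one you flag and then defer: the induction is not well-founded as organized, and neither of your two proposed fixes closes it. The kernel $N$ of $H\twoheadrightarrow\Z$ is in general infinitely generated, and its finitely generated subgroups can be just as complicated as $H$ itself -- already for $H=F_2$ the kernel is free of infinite rank and contains copies of $F_2$, so no ordinal rank on locally indicable groups can strictly drop from $H$ to $N$ while being monotone enough to control the finitely generated subgroups of $N$; and the ``cleaner alternative'' of quoting the finitely generated case for the subgroups of $N$ is circular, since that case is precisely what is being proved. This is why the actual proofs (Hughes' original argument, and the Dicks--Herbera--S\'anchez treatment that the paper's \cref{thm. source} and the tree complexity of Section 2.12 come from) do not induct on subgroups at all: they perform a transfinite induction on the well-ordered complexity $\tree(\alpha)$ of individual elements of the universal rational semiring $\rat(U)$, use the source subgroup to drop to a finitely generated subgroup admitting a $\Z$-quotient, and use a statement of the shape of \cref{prop. laurent and complexity} to expand an element as a twisted Laurent series whose coefficients have strictly smaller complexity. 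Without that element-level induction (or some substitute for it), what you have written is a reduction of Hughes' theorem to its hardest step, not a proof of it.
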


\subsection{Specialization, universality and Lewin groups}
Let $R$ be a ring.

\begin{definition}
Given two epic $R$-fields $\beta \colon R\rightarrow D$ and $\beta'\colon R\rightarrow D'$, a \textit{specialisation} of $D$ to $D'$ with respect to $R$ is a pair $(S,\alpha)$ where $S$ is a subring of $D$ containing $\im\beta$, the map $\alpha\colon S\rightarrow D'$ is a ring homomorphism with $\alpha\circ \beta=\beta'$, and every element in $S$ not mapped to $0$ by $\alpha$ is invertible in $S$. 
\end{definition}

\begin{definition}
An epic $R$-field $\beta\colon R\rightarrow D$ is called the \textit{universal} $R$\textit{-field} if for every epic $R$-field $D'$ there is a specialization of $D$ to $D'$ with respect to $R$. If in addition the map $R\rightarrow D$ is injective, then $D$ is called \textit{the universal field of fractions} of $R$.
\end{definition}

\begin{definition}
A group $G$ is \textit{Lewin} if for every twisted group ring $DG$ with $D$ a skew field, there is a Hughes-free universal $DG$-field.
\end{definition}

Let $G$ be a finitely generated Lewin group. By \cite{jaikin2020universality}*{Proposition 4.1}, $G$ is locally indicable and thus there is a natural embedding $\tau\colon \C G \rightarrow \D_G$ of $\C G$ into its Linnell skew field $\D_G$. 

Let $q\colon G\twoheadrightarrow G_{\mathrm{fab}}$ be the natural quotient homomorphism of $G$ onto its maximal free abelian quotient $G_{\mathrm{fab}}$. Let $X$ be a basis of $G_{\mathrm{fab}}$ and let $x\in X$. Consider the group ring $\D_G G_{\mathrm{fab}}$. Since we are interested in computing $\deg_x$, the difference between the highest and the lowest power of $x$ with respect to the basis $X$, we will denote $\ore(\D_G G_{\mathrm{fab}})$ by $\D_G(X)$ to emphasize the role played by $X$.

Let $\sigma\colon G\rightarrow \GL_n(\C)$ be a complex representation of $G$ of finite dimension $n$. For reasons that will be clear later, we are interested in the representations
\begin{align*}
    \sigma\otimes_{\Z} q \colon& G\rightarrow \GL_n(\C(X)),~~ (\sigma\otimes_{\Z} q)(g)=\sigma(g)q(g),\\
   \sigma\otimes_{\C}\tau\otimes_{\Z} q \colon& G\rightarrow \GL_n(\D_G(X)),~~(\sigma\otimes_{\C}\tau\otimes_{\Z} q)(g)=\sigma(g)\tau(g)q(g).
\end{align*}

Let $M$ be an invertible square matrix over $\Z G$. By applying $\sigma\otimes_{\Z} q$ and ${\sigma\otimes_{\C}\tau\otimes_{\Z} q}$ to every entry of $M$ we obtain matrices $(\sigma\otimes_{\Z} q)(M)$ and $(\sigma\otimes_{\C}\tau\otimes_{\Z} q)(M)$, respectively.

\begin{lemma}\label{lem. key lem}
We have the inequality
\[\deg_x\Det_{\C(X)}(\sigma\otimes_{\Z}q)(M)\leqslant \deg_x\Det_{\D_G(X)} (\sigma\otimes_{\C}\tau\otimes_{\Z} q) (M).\]
\end{lemma}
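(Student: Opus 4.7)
The plan is to move both Dieudonné determinants into the common ambient skew field $\D_G(X)$, expand in Laurent series in $x$, and then use Hughes-freeness of $\tau$ to compare the two expansions. First, I would invoke the natural embedding $\iota\colon \C(X)\hookrightarrow \D_G(X)$ induced by $\C\subset \D_G$ (which is the restriction of $\tau$). Because $\iota$ is a ring embedding, the inductive procedure defining the canonical representative $\Det^c$ produces the same element whether computed in $\C(X)$ or (after applying $\iota$) in $\D_G(X)$; furthermore $\deg_x$ is defined by the same Laurent expansion on either side. Hence it suffices to show
\[\deg_x\Det_{\D_G(X)}A \;\leqslant\; \deg_x\Det_{\D_G(X)}B,\]
where $A=\iota((\sigma\otimes_{\Z}q)(M))$ and $B=(\sigma\otimes_{\C}\tau\otimes_{\Z}q)(M)$, both regarded as elements of $M_{kn}(\D_G[G_{\mathrm{fab}}])\subset M_{kn}(\D_G(X))$.

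Second, setting $K'=\langle X\setminus\{x\}\rangle$ and $E=\D_G(X\setminus\{x\})$, I would embed $\D_G(X)\hookrightarrow E(\!(x)\!)$ via the expansion map $\alpha$ of Section~\ref{sec. order}, and write $A=\sum_i A_i x^i$ and $B=\sum_i B_i x^i$ with $A_i,B_i\in M_{kn}(E)$. The key combinatorial input is the support containment $\supp_x A\subseteq\supp_x B$ together with $\max\supp_xB\geqslant \max\supp_xA$ and $\min\supp_xB\leqslant \min\supp_xA$. This follows from Hughes-freeness: if $\sum_{g:\,q(g)=h}c_g\sigma(g)\tau(g)=0$ in $M_n(\D_G)$ for some $h\in G_{\mathrm{fab}}$, one may write $g=ks(h)$ for a section $s$, reduce to $\sum_k c_{ks(h)}\sigma(k)\tau(k)=0$, and then read off the $(a,b)$-entry as $\tau\bigl(\sum_k c_{ks(h)}\sigma(k)_{ab}\cdot k\bigr)=0$; injectivity of $\tau|_{\C K}$ forces $\sum_k c_{ks(h)}\sigma(k)_{ab}\cdot k=0$ in $\C K$, hence $c_{ks(h)}\sigma(k)=0$, hence $c_{ks(h)}=0$ (as $\sigma(k)\in\mathrm{GL}_n(\C)$). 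Thus the vanishing of $B_i$ is equivalent to the total absence of group elements of $M$ with $x$-exponent $i$, whereas $A_i$ can vanish even when such elements are present, due to accidental cancellations inside $M_n(\C)$.

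Third, combining this with a degree analysis in the style of Lemma~\ref{lem. matrix reduction} and equation~\eqref{eq. degree and order}, I would bound $\deg_x\Det A$ above by $kn\cdot(\max\supp A-\min\supp A)$ by factoring $x^{\min\supp A}$ out of $A$, applying the power-series reduction of Lemma~\ref{lem. matrix reduction} to the resulting matrix, and doing the symmetric computation for $\beta(A)$ (cf.~equation~\eqref{eq. degree and order}); I would then try to establish the matching lower bound $\deg_x\Det B\geqslant kn\cdot(\max\supp B-\min\supp B)$, which combined with the support containment yields the desired inequality. \textbf{The main obstacle is precisely this lower bound for $\Det B$}: in general the leading and trailing matrix coefficients $B_{\max\supp B}$ and $B_{\min\supp B}$ need not be invertible in $M_{kn}(E)$, so a naïve application of Lemma~\ref{lem. matrix reduction} is insufficient. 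To overcome this I would exploit the Lewin hypothesis via the universality of the Linnell skew field $\D_G$ (Theorem~\ref{thm. hughes} together with Jaikin-Zapirain's results): the twist by $\tau$ embeds the coefficient data of the leading/trailing coefficients of $B$ into a Hughes-free $\C K$-field inside which the accidental cancellations that could reduce the degree are precluded, so that the universal determinant realizes its maximal degree. Making this last passage rigorous — essentially identifying a skew field containing both $\D_G$ and a "universal" target of the leading block matrices — is the technical heart of the argument.
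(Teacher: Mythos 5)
Your reduction to comparing two matrices $A,B$ inside $\D_G(X)$ and your support observation are fine as far as they go, but the step you yourself flag as the ``technical heart'' is not merely hard --- the intermediate inequality you aim for is false, and no appeal to universality can rescue it. You want $\deg_x\Det_{\D_G(X)}B\geqslant kn\cdot(\max\supp_x B-\min\supp_x B)$, but the degree of a Dieudonn\'e determinant is not bounded below by the spread of the entry supports: take $n=1$, $\sigma$ trivial, and $M=\begin{pmatrix}1&g\\0&1\end{pmatrix}$ with $q(g)=x$; then $B$ is upper triangular with $\Det B=1$, so $\deg_x\Det B=0$ while your proposed lower bound is $2$. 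Universality of $\D_G$ (the Lewin hypothesis) prevents zero divisors and accidental cancellations at the level of single group-ring elements, but it says nothing about leading or trailing coefficient matrices of $B$ being invertible, and in general they are not; so the chain $\deg\Det A\leqslant kn(\max-\min)(A)\leqslant kn(\max-\min)(B)\leqslant\deg\Det B$ cannot be completed. The missing idea is that one should not try to compute either degree, but only compare them via a \emph{specialization}: degrees of Dieudonn\'e determinants can only decrease under a specialization of epic $R$-fields.

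That is exactly how the paper argues. Set $Y=X\smallsetminus\{x\}$ and $R=\C G[Y^{\pm}]$, so $\C G[X^{\pm}]=R[x^{\pm}]$, and view both matrices as the images of the single matrix $(\sigma\otimes_{\C}\mathrm{id}_{\C G}\otimes_{\Z}q)(M)$ over $R[x^{\pm}]$ under the two $R$-field structures $\beta\colon R\rightarrow\D_G(Y)$, $g\mapsto\tau(g)$, and $\beta'\colon R\rightarrow\C(Y)$, $g\mapsto 1$ (both sending $y\mapsto y$ and extended by $x\mapsto x$). Since $G$ is Lewin, $\D_G$ is the universal field of fractions of $\C G$ (Jaikin-Zapirain), so there is a specialization of $\D_G(Y)$ to $\C(Y)$ with respect to $R$, and the inequality is then precisely \cite{funke2018alexander}*{Proposition 4.1}. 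Your final paragraph invokes the right ingredients (Lewin, universality, Hughes-freeness) but attaches them to the wrong claim --- maximality of $\deg\Det B$ --- rather than to the monotonicity of determinant degrees under specialization, which is what actually closes the argument.
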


\begin{proof}
We would like to apply \cite{funke2018alexander}*{Proposition 4.1}. Let $Y=X\smallsetminus\{x\}$. Consider the ring $R=\C G[Y^{\pm}]$. The ring $\C G[X^{\pm}]=R[x^{\pm}]$ is the ring of Laurent polynomials over $R$. Consider two $R$-fields 
\begin{align*}
    \beta \colon & R\rightarrow \D_G(Y),~~\beta(g)=\tau(g),\beta(y)=y \text{ for all }g\in G,y\in Y,\\
    \beta' \colon & R\rightarrow \C(Y),~~\beta'(g)=1, \beta'(y)=y \text{ for all }g\in G,y\in Y.
\end{align*}

Consider the representation
\[\sigma\otimes_{\C}\mathrm{id}_{\C G}\otimes_{\Z}q\colon G \rightarrow \GL_n(R[x^{\pm}]).\]

As above, by applying $\sigma\otimes_{\C}\mathrm{id}_{\C G}\otimes_{\Z}q$ to every entry of $M$ we get a matrix $(\sigma\otimes_{\C}\mathrm{id}_{\C G}\otimes_{\Z}q)(M)$. Note that $\beta'$ can be extended to a map from $R[x^{\pm}]$ to $\C(X)$ by setting $\beta'(x)=x$. With this convention we can then apply $\beta'$ to each entry of the matrix $(\sigma\otimes_{\C}\mathrm{id}_{\C G}\otimes_{\Z}q)(M)$ to get a square matrix $\beta'((\sigma\otimes_{\C}\mathrm{id}_{\C G}\otimes_{\Z}q)(M))$ over $\C(X)$. Note that
\[(\sigma\otimes_{\Z}q)(M)=\beta'((\sigma\otimes_{\C}\mathrm{id}_{\C G}\otimes_{\Z}q)(M)).\]
Similarly, we can extend $\beta$ to a map from $R[x^{\pm}]$ to $\D_G(X)$ by setting $\beta(x)=x$. We have
\[(\sigma\otimes_{\C}\tau\otimes_{\Z} q) (M)=\beta((\sigma\otimes_{\C}\mathrm{id}_{\C G}\otimes_{\Z}q)(M)).\]

By \cite{jaikin2020universality}*{Theorem 3.7}, $\D_G$ is the universal field of fractions of $\C G$. Therefore, there exists a specialization of $\D_G(Y)$ to $\C(Y)$ with respect to $R$. The desired result thus follows from \cite{funke2018alexander}*{Proposition 4.1}.
\end{proof}

\subsection{Rational semirings}
In this and the next two subsections we recall results about rational semirings that are necessary in the study of twisted $\ell^2$-Betti numbers. By a \textit{semiring} $R$ we mean a set together with an associative addition and an associative multiplication with identity element $1_R$ which is distributive over the addition. Let $U$ be a group and let $R$ be a semiring. We say that $R$ is a \textit{rational $U$-semiring} if
\begin{enumerate}
    \item[(i)] There is a map $\diamond\colon R\rightarrow R, r\mapsto r^{\diamond}$, called the \textit{rational structure} on $R$.
    \item[(ii)] $R$ is a \textit{$U$-biset}, i.e., $U$ acts on both sides of $R$ in a compatible way: $(ur)v=u(rv)$ for all $u,v\in U,r\in R$.
    \item[(iii)] For every $u,v\in U$ and $r\in R$, $(urv)^{\diamond}=v^{-1}r^{\diamond}u^{-1}$.
\end{enumerate}

\begin{example}\label{eg. rational}
Let $G$ be a group and let $R$ be a ring with a ring homomorphism $\sigma\colon \C G\rightarrow R$. Then $R$ is a $\C^{\times} G$-biset with the action given by
\[(c_1g_1,c_2g_2,r)\mapsto c_1\sigma(g_1)\cdot r\cdot c_2\sigma(g_2)\]
for all $c_1,c_2\in \C^{\times},g_1,g_2\in G,r\in R$.
Let $S$ be the division closure of $\sigma(\C G)$ in $R$. Then $S$ is a rational $\C^{\times}G$-semiring under the rational map given by the following: if $s\in S$ is invertible in $R$ then $s^{\diamond}=s^{-1}$; otherwise, $s^{\diamond}=0$.
\end{example}

A \textit{morphism of rational $U$-semirings} $\Phi\colon R_1\rightarrow R_2$ is a map such that
\begin{enumerate}
    \item[(i)] $\Phi(r+r')=\Phi(r)+\Phi(r')$;
    \item[(ii)] $\Phi(rr')=\Phi(r)\Phi(r')$ and $\Phi(1_{R_1})=1_{R_2}$;
    \item[(iii)] $\Phi(r^{\diamond})=\Phi(r)^{\diamond}$ for all $r\in R_1$;
    \item[(iv)] $\Phi(urv)=u\Phi(r)v$ for all $u,v\in U,r\in R_1$.
\end{enumerate}

Below, we recall the construction of the \textit{universal rational $U$-semiring} $\rat(U)$. It is characterized by the following universal property:

\begin{lemma}[\cite{dicks2004theorem}*{Lemma 4.7}]\label{lem. universal morphism}
If $R$ is a rational $U$-semiring, then there exists a unique morphism of rational $U$-semirings $\Phi\colon \rat(U)\rightarrow R$.
\end{lemma}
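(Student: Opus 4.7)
The plan is to proceed by structural induction on the recursive construction of $\rat(U)$ just recalled. In that construction, every element of $\rat(U)$ is represented by a formal rational expression built from the constants in $U$ (together with $0$ and $1_{\rat(U)}$) through iterated application of the four generating operations of a rational $U$-semiring — sum, product, diamond, and left/right $U$-biset twist — modulo the minimal congruence that makes the quotient into a rational $U$-semiring. The argument will use this inductive structure both to pin down $\Phi$ on generators and to propagate it through compound expressions.

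For uniqueness, any candidate morphism $\Phi$ is forced on the constants: axiom (ii) fixes $\Phi(1_{\rat(U)}) = 1_R$, while axiom (iv) then fixes $\Phi(u)$ for each $u \in U$ as $u \cdot 1_R$, where the right-hand side uses the $U$-biset structure on $R$. Axioms (i)–(iv) now propagate these values recursively to every formal rational expression, so any two morphisms of rational $U$-semirings $\rat(U) \to R$ must coincide on a system of representatives, and therefore on $\rat(U)$.

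For existence, I would first define a map $\widetilde\Phi$ on the set of formal rational expressions by structural recursion, using axioms (i)–(iv) as the defining equations: $\widetilde\Phi(u) = u \cdot 1_R$ on generators, and $\widetilde\Phi$ commutes with sum, product, diamond, and biset twist. The decisive step is then to show that $\widetilde\Phi$ factors through the congruence defining $\rat(U)$. Since that congruence is by construction the smallest one forcing $\rat(U)$ to be a rational $U$-semiring, every pair of expressions it identifies is identified by a formal consequence of the rational $U$-semiring axioms. But $R$ itself satisfies those axioms by hypothesis, so under $\widetilde\Phi$ every such identification becomes an equality in $R$; hence $\widetilde\Phi$ descends to a well-defined map $\Phi \colon \rat(U) \to R$, which satisfies (i)–(iv) by construction.

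The main obstacle is the descent check, which is technical rather than conceptual: it amounts to matching every defining relation on $\rat(U)$ — the semiring axioms, the $U$-biset compatibility, and most notably the diamond--biset relation $(urv)^\diamond = v^{-1} r^\diamond u^{-1}$ — with the corresponding identity that already holds in $R$. The last of these is the subtlest, since $^\diamond$ is not a group operation and its interaction with the biset action must be followed carefully; however, once the axiom in $R$ is invoked the identity is immediate. Collecting these verifications yields the desired $\Phi$, and uniqueness from the second paragraph completes the proof.
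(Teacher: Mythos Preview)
The paper does not supply its own proof of this lemma; it is quoted verbatim from \cite{dicks2004theorem}*{Lemma 4.7} and used as a black box. So there is nothing in the paper to compare your argument against directly.

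On its own merits your sketch is broadly sound --- uniqueness is indeed forced by the values on $U$ and compatibility with the four operations, and existence follows by structural recursion --- but your framing of $\rat(U)$ as ``formal rational expressions modulo the minimal congruence making the quotient a rational $U$-semiring'' does not match the construction actually recalled in the paper. There $\rat(U)$ is built explicitly, layer by layer: one alternates the free-additive-semigroup functor $\N[\,\cdot\,]\smallsetminus\{0\}$, the free-monoid-over-$U$ functor $U\natural(\cdot)$, and the formal-diamond functor $(\cdot)^\diamond$, with the $U$-biset structure on $X^\diamond$ \emph{defined} so that $(urv)^\diamond=v^{-1}r^\diamond u^{-1}$ holds on the nose rather than being imposed as a relation. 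Consequently the ``descent through the congruence'' step you describe is not the actual work; what one really checks is (a) that $\Phi$ is well defined on each $U\natural X_n$ with respect to the equivalence $x_1\cdots(x_iu)x_{i+1}\cdots x_n\sim x_1\cdots x_i(ux_{i+1})\cdots x_n$, and (b) that at each stage the assignment $\Phi(x^\diamond)=\Phi(x)^\diamond$ is compatible with the $U$-biset structures. Both are immediate from the axioms on $R$, so your conclusion stands, but the shape of the verification is different from what you wrote. Note also that $\rat(U)$ as defined here does not contain $0$; the extension by $0\mapsto 0$ is handled separately in the paper.
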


Before defining $\rat(U)$, we present some definitions and notation:

\begin{itemize}
    \item If $X$ is a set, then the free additive semigroup on $X$ is $\N X\smallsetminus\{0\}$. Here, our convention is $0\in\N$. Note that if $X$ is a multiplicative monoid with a $U$-biset structure, then $\N X\smallsetminus\{0\}$ is naturally a $U$-semiring.
    \item If $X$ is a $U$-biset, then $X^{\times^n_U}$ is the set of equivalence classes of words in $X$ of length $n$ with respect to the relation generated by
    \[x_1x_2\cdots(x_iu)x_{i+1}\cdots x_n\sim x_1x_2\cdots x_i(ux_{i+1})\cdots x_n\] 
    for all $x_1,x_2,\cdots,x_n\in X, u\in U$. The multiplicative free monoid on $X$ over $U$ is defined as
    \[U\natural X=\bigcup^{\infty}_{n=0}X^{\times^n_U}\]
    where by definition $X^{\times^0_U}=U$. Observe that $\N [U\natural X]\smallsetminus\{0\}$ is naturally a $U$-semiring.
    \item If $X$ is a $U$-biset, then $X^{\diamond}$ denotes a disjoint copy of $X$ together with a bijective map $X\rightarrow X^{\diamond},x\mapsto x^{\diamond}$, and a $U$-biset structure given by 
    \[ux^{\diamond}v=(v^{-1}xu^{-1})^{\diamond}\]
    for all $u,v\in U,x\in X$.
\end{itemize}

The \textit{universal rational $U$-semiring} is defined as follows:
\begin{itemize}
    \item First consider the $U$-semiring $\N U\smallsetminus\{0\}$ and set $X_0=\emptyset,X_1=(\N U\smallsetminus\{0\})^{\diamond}$.
    \item Suppose $n\geqslant 1$, $X_n$ is a $U$-biset and $X_{n-1}$ is a $U$-sub-biset of $X_n$. Consider the $U$-semiring $\N[U\natural X_n]\smallsetminus\{0\}$ and the $U$-sub-biset $\N[U\natural X_n]\smallsetminus \N[U\natural X_{n-1}]$. Define
    \[X_{n+1}=(\N[U\natural X_n]\smallsetminus \N[U\natural X_{n-1}])^{\diamond}\cup X_n.\]
    \item Then $X=\bigcup_{n\geqslant 0} X_n$ is a $U$-biset. Let
    \[\rat(U)=\N[U\natural X]\smallsetminus\{0\}.\]
\end{itemize}

For later reference, we note the following.

\begin{theorem}[\cite{dicks2004theorem}*{Lemma 5.4 and Theorem 5.7}]\label{thm. source}
If $\alpha\in \rat(U)$, then there exists a subgroup $\mathrm{source}(\alpha)\leqslant U$ with the following properties.
\begin{enumerate}[label=(\roman*)]
    \item\label{item. source 1} $\mathrm{source}(\alpha)$ is finitely generated and $\alpha\in \rat(\mathrm{source}(\alpha))\cdot U$.
    \item\label{item. source 2} If $V$ is a subgroup of $U$ such that $\alpha\in \rat(V)\cdot U$, then $\mathrm{source}(\alpha)\leqslant V$.
\end{enumerate}
\end{theorem}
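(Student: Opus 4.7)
The natural candidate is to define $\mathrm{source}(\alpha)$ to be the smallest subgroup $V \leqslant U$ with $\alpha \in \rat(V) \cdot U$. The plan is thus threefold: exhibit a finitely generated witness, prove an intersection-closure property so that the minimal such $V$ is well defined, and deduce items (i) and (ii) essentially tautologically.

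For the first step, observe that $\rat(U) = \N[U \natural X] \smallsetminus \{0\}$ with $X = \bigcup_n X_n$ is built inductively so that every $\alpha \in \rat(U)$ admits, by definition, a finite syntactic description involving only finitely many symbols of $U$ (those appearing as coefficients and as entries in the $U$-biset words, plus those arising from the rational-closure symbols at lower levels). If $V_0$ denotes the subgroup generated by the $U$-symbols appearing in one such fixed description, then $V_0$ is finitely generated and $\alpha \in \rat(V_0)$.

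The main obstacle — and really the content of the theorem — is the intersection-closure statement: if $\alpha \in \rat(V_1) \cdot U$ and $\alpha \in \rat(V_2) \cdot U$, then $\alpha \in \rat(V_1 \cap V_2) \cdot U$. I would attack this by induction on the \emph{rational depth} of $\alpha$, i.e.\ the least $n$ with $\alpha \in \N[U \natural X_n] \smallsetminus \{0\}$. At depth $0$ (no $\diamond$ operations applied), elements are formal $\N$-linear combinations of group elements, and the intersection property reduces to a support argument in $\N U$. For the inductive step, given two expressions at depth $n+1$ realising the same $\alpha$ inside $\rat(U)$, one must analyse the identifications imposed by the $U$-biset relation $x_i u \, x_{i+1} \sim x_i \, u x_{i+1}$ together with those produced by $\diamond$, and show that after moving $U$-weights between adjacent letters one can bring both expressions into a form using only symbols from $V_1 \cap V_2$; the $\diamond$-closure step then reduces to the inductive hypothesis applied to sub-expressions of strictly smaller depth. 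This normal-form-type analysis is the crux of the argument and is the part most likely to require the detailed combinatorics of Dicks--Linnell.

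Granted the intersection property, define
\[
\mathrm{source}(\alpha) \;=\; \bigcap_{\substack{V \leqslant V_0 \\ \alpha \in \rat(V) \cdot U}} V.
\]
Because $V_0$ is finitely generated and the collection is closed under finite intersections (hence filtered downward), this intersection is realised by some subgroup of $V_0$, which is therefore itself finitely generated and satisfies $\alpha \in \rat(\mathrm{source}(\alpha)) \cdot U$, giving (i). For (ii), if $V \leqslant U$ is arbitrary with $\alpha \in \rat(V) \cdot U$, then $V \cap V_0$ is in the family above, so $\mathrm{source}(\alpha) \leqslant V \cap V_0 \leqslant V$, as required.
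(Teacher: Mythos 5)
This statement is not proved in the paper at all: it is imported verbatim from Dicks--Herbera--S\'anchez (\cite{dicks2004theorem}, Lemma 5.4 and Theorem 5.7), so there is no internal argument to compare yours against; your attempt therefore has to stand on its own, and as written it has two genuine gaps. The first is that the heart of the theorem is deferred rather than proved: you correctly identify the intersection-closure property (if $\alpha\in\rat(V_1)\cdot U$ and $\alpha\in\rat(V_2)\cdot U$ then $\alpha\in\rat(V_1\cap V_2)\cdot U$) as ``really the content of the theorem,'' but the inductive plan you sketch (``moving $U$-weights between adjacent letters,'' ``normal-form-type analysis'') is exactly the nontrivial combinatorics of the reference, and you explicitly leave it to Dicks--Linnell. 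Proving this claim directly is essentially equivalent to the minimality statement \ref{item. source 2} itself; in the source paper it is obtained as a consequence of an explicit inductive construction of $\mathrm{source}(\alpha)$ along the stages $X_n$ of the universal semiring, not by a direct comparison of two arbitrary representations of $\alpha$. So the proposal reduces the theorem to an unproven claim of the same difficulty.

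The second gap is the passage from finite intersections to the full intersection. You define $\mathrm{source}(\alpha)$ as the intersection of all $V\leqslant V_0$ with $\alpha\in\rat(V)\cdot U$ and assert that, since $V_0$ is finitely generated and the family is downward filtered, ``this intersection is realised by some subgroup of $V_0$.'' That does not follow: subgroups of a finitely generated group satisfy no descending chain condition (already in a rank-two free group there are strictly descending chains of finitely generated subgroups with trivial intersection), and closure under \emph{finite} intersections gives no control over the infinite intersection. What you would need is that $\alpha\in\rat\bigl(\bigcap V\bigr)\cdot U$, and nothing in your argument delivers this; it is precisely why the reference builds $\mathrm{source}(\alpha)$ constructively (by induction on the complexity of $\alpha$) and then verifies properties \ref{item. source 1} and \ref{item. source 2}, rather than characterising it a posteriori as a minimal element of a family of subgroups. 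Your first step (extracting a finitely generated $V_0$ with $\alpha\in\rat(V_0)$ from a finite syntactic description) is fine, but by itself it only gives existence of \emph{some} finitely generated witness, not of a minimum.
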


An element $\alpha\in\rat(U)$ is called \textit{primitive} if $\alpha\in\rat(\mathrm{source}(\alpha))$.

\subsection{Trees and complexity}\label{sec. tree}

Let $\mathcal{T}$ be the set of all finite rooted trees up to isomorphism. Here we recall that $\mathcal{T}$ has a well-order satisfying certain properties and is a $U$-semiring for any group $U$. The order will be used to define a complexity on elements of $\rat(U)$.

Denote by $0_{\mathcal{T}}$ the one-vertex tree. If $0_{\mathcal{T}}\neq X\in\mathcal{T}$, denote by $\mathrm{fam}(X)$ the finite family of finite rooted trees obtained from $X$ by deleting the root and all incident edges, where the root of an element $Y\in \mathrm{fam}(X)$ is the unique vertex of $Y$ that is incident to the root of  $X$. We denote by $\exp(X)$ the tree obtained from $X$ by adding a new vertex which is declared to be the root of $\exp(X)$, and a new edge joining it to the root of $X$.

Let $X,Y\in\mathcal{T}$. The sum $X+Y\in\mathcal{T}$ is the rooted tree obtained by identifying the roots of $X,Y$ and declaring it to be the root of $X+Y$. The product $X\cdot Y$ is defined as follows: if one of $X,Y$ is $0_{\mathcal{T}}$, then $X\cdot Y=0_{\mathcal{T}}$ by definition; if $X,Y\neq 0_{\mathcal{T}}$, the product $X\cdot Y$ is obtained by adding pairwise elements of $\mathrm{fam}(X)$ with elements of $\mathrm{fam}(Y)$, and then connecting all the resulting finite rooted trees by adding a new vertex with incident edges to their roots, and declaring the new vertex to be the root of $X\cdot Y$, i.e.,
\[X\cdot Y=\sum_{\substack{X'\in\mathrm{fam}(X)\\ Y'\in\mathrm{fam}(Y)}}\exp(X'+Y').\]
The rational map of $\mathcal{T}$ is given by
\[X^{\diamond}=\exp^2(X).\]
The group $U$ acts on both sides of $\mathcal{T}$ by the trivial action. With these operations, $\mathcal{T}$ is a rational $U$-semiring.

Let $\mathcal{T}_n\subset\mathcal{T}$ be the subset consisting of all elements with at most $n$ edges. The following defines a well-order on $\mathcal{T}$ \cite{dicks2004theorem}*{Lemma 3.3}:
\begin{itemize}
    \item $0_{\mathcal{T}}$ is the least element of $\mathcal{T}$.
    \item Suppose that $\mathcal{T}_{n-1}$ has already been ordered for some $n\geqslant 1$. Let $X,Y\in\mathcal{T}_n\smallsetminus\{0_{\mathcal{T}}\}$. Let $\log(X)$ be the largest element of $\mathcal{T}_{n-1}$ in $\mathrm{fam}(X)$, so $\exp(\log(X))$ is a summand of $X$, and denote its complement by $X-\exp(\log(X))\subset \mathcal{T}_{n-1}$. Define $X>Y$ if either $\log(X)>\log(Y)$ or $\log(X)=\log(Y)$ and $X-\exp(\log(X))>Y-\exp(\log(Y))$.
\end{itemize}

By Lemma \ref{lem. universal morphism}, there is a unique map 
\[\tree\colon \rat(U)\cup\{0\}\rightarrow \mathcal{T}\]
that maps $0$ to $0_{\mathcal{T}}$. For $\alpha\in \rat(U)\cup\{0\}$, the image $\tree(\alpha)$ is called the \textit{complexity} of $\alpha$.

\begin{remark}\label{rm. complexity}
If $V\leqslant U$ is a subgroup  then by Lemma \ref{lem. universal morphism} there is a unique map
\[\tree_V\colon \rat(V)\cup \{0\}\rightarrow \mathcal{T}\]
that maps $0$ to $0_{\mathcal{T}}$. If $\alpha\in\rat(V)\cup\{0\}\subset\rat(U)\cup\{0\}$, then $\tree_V(\alpha)=\tree(\alpha)$ for all $\alpha\in \rat(V)\cup \{0\}$, i.e., the complexity of $\alpha$ does not depend on whether we consider $\alpha$ as an element of $\rat(V)\cup\{0\}$ or $\rat(U)\cup\{0\}$.
\end{remark}

\subsection{Tree complexity associated to groups}\label{sec. complexity}
Let $G$ be a locally indicable group, let $\tau\colon \C G \rightarrow \D_G$ be the natural embedding into the Linnell skew field, and let $\sigma\colon G \rightarrow \mathrm{GL}_n(\C)$ be a finite-dimensional representation.

For every subgroup $H\leqslant G$, we think of the Linnell skew field $\D_H$ as a subring of $\D_G$. Let $\widetilde\D_H$ be the division closure of $(\sigma\otimes_{\C}\tau)(\C H)$ in $M_n(\D_G)$. Note that by the definition of the division closure we have
\begin{equation}\label{eq. subring}
    \widetilde \D_H\leqslant M_n(\D_H).
\end{equation}

Think of $\sigma\otimes_{\C}\tau$ as a ring homomorphism from $\C H$ to $M_n(\D_G)$. Then under the rational map given by Example \ref{eg. rational}, $\widetilde\D_H$ is a rational $\C^{\times}H$-semiring. Lemma \ref{lem. universal morphism} then gives a map 
\[\Phi_H\colon \rat(\C^{\times}H)\cup\{0\}\rightarrow \widetilde\D_H.\]

\begin{lemma}\label{lem. complexity wd}
The image $\im(\Phi_H)$ equals $\widetilde\D_H$.
\end{lemma}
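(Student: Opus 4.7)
The plan is to show that $\im(\Phi_H)$ is itself a division-closed subring of $M_n(\D_G)$ containing $(\sigma\otimes_{\C}\tau)(\C H)$. Once this is established, the definition of $\widetilde{\D}_H$ as the \emph{smallest} such subring forces $\widetilde{\D}_H \subseteq \im(\Phi_H)$, while the reverse inclusion $\im(\Phi_H) \subseteq \widetilde{\D}_H$ is immediate since $\Phi_H$ is constructed as a morphism into $\widetilde{\D}_H$.

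To verify the required properties of $\im(\Phi_H)$, I would first note that $\C^\times H$ embeds into $\rat(\C^\times H)$ as the length-zero words (that is, $(\C^\times H)^{\times^0_{\C^\times H}} = \C^\times H \subseteq U \natural X$). The morphism $\Phi_H$ sends $1_{\rat(\C^\times H)}$ to $1_{\widetilde{\D}_H}$, and the biset-equivariance condition $\Phi_H(urv) = u\Phi_H(r)v$ together with the biset structure on $\widetilde{\D}_H$ from Example \ref{eg. rational} then forces
\[\Phi_H(u) = u\cdot 1_{\widetilde{\D}_H} = (\sigma\otimes_\C\tau)(u)\]
for every $u\in \C^\times H$. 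Using addition in $\rat(\C^\times H)$ together with the scaling action of $\C^\times \subseteq \C^\times H$, every finite $\C$-linear combination of elements of $(\sigma\otimes_\C\tau)(H)$ is produced in $\im(\Phi_H)$, so $(\sigma\otimes_\C\tau)(\C H)\subseteq\im(\Phi_H)$.

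Next, closure of $\im(\Phi_H)$ under the ring operations of $M_n(\D_G)$ must be checked. Addition and multiplication, and the unit, are carried by $\Phi_H$ because it is a semiring morphism; additive inverses come from acting by $-1 \in \C^\times$ via the biset structure, giving $\Phi_H((-1)\cdot r) = -\Phi_H(r)$; and the identity $\Phi_H(r^\diamond)=\Phi_H(r)^\diamond$ combined with the rational structure on $\widetilde{\D}_H$ recalled in Example \ref{eg. rational} (which sends an element invertible in $M_n(\D_G)$ to its inverse) shows that $\im(\Phi_H)$ contains the inverse in $M_n(\D_G)$ of every one of its elements that happens to be invertible there. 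Thus $\im(\Phi_H)$ is a division-closed subring of $M_n(\D_G)$ containing $(\sigma\otimes_\C\tau)(\C H)$, whence $\widetilde{\D}_H\subseteq \im(\Phi_H)$, and combined with the reverse inclusion we obtain equality.

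No genuine obstacle is expected; the argument is essentially a bookkeeping exercise confirming that the biset and semiring structures on $\rat(\C^\times H)$ and on $\widetilde{\D}_H$ are compatible under $\Phi_H$, after which division-closedness of the image is automatic.
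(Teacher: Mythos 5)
Your proof is correct and follows essentially the same route as the paper: the inclusion $\im(\Phi_H)\subset\widetilde\D_H$ is automatic from the target of $\Phi_H$, and the reverse inclusion is obtained by noting that $\im(\Phi_H)$ contains $(\sigma\otimes_{\C}\tau)(\C H)$ and is closed under inverses of elements invertible in $M_n(\D_G)$ via $\Phi_H(\alpha^{\diamond})=\Phi_H(\alpha)^{\diamond}$ and the rational structure of Example~\ref{eg. rational}, hence contains the division closure. The only difference is that you make explicit the subring and biset-equivariance bookkeeping that the paper leaves implicit.
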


\begin{proof}
Note that the target of $\Phi_H$ is $\widetilde\D_H$, and so we automatically have the inclusion $\im(\Phi_H)\subset \widetilde\D_H$. To prove the reverse containment, first note that $\im(\Phi_H)$ contains $(\sigma\otimes_{\C}\tau)(\C H)$. Let $x\in \im(\Phi_H)$. Then there exists $\alpha\in \rat(\C^{\times}H)\cup\{0\}$ such that $\Phi_H(\alpha)=x$. If $x$ is invertible in $M_n(\D_G)$, then $x$ is invertible in $\widetilde\D_H$, and then $x^{-1}=x^{\diamond}=\Phi_H(\alpha^{\diamond})\in\im(\Phi_H)$. So $\im(\Phi_H)$ contains the division closure of $(\sigma\otimes_{\C}\tau)(\C H)$ in $M_n(\D_G)$, that is, $\widetilde\D_H$.
\end{proof}

Let $\mathcal{T}$ be the set of finite rooted trees. As in Section \ref{sec. tree}, we get a map
\[\tree\colon \rat(\C^{\times}H)\cup\{0\}\rightarrow \mathcal{T}.\]
The \textit{$H$-complexity} of an element $x\in\widetilde\D_H$ is defined as
\[\tree_H(x)=\min\{\tree(\alpha)\mid \alpha\in \rat(\C^{\times}H)\cup\{0\}, \Phi_H(\alpha)=x\}.\]
By Lemma \ref{lem. complexity wd}, $\tree_H$ is defined on the whole of $\widetilde\D_H$. We say that 
\[\alpha\in \rat(\C^{\times}H)\cup\{0\}\]
\textit{realizes the $H$-complexity} of $x$ if $\Phi_H(\alpha)=x$ and $\tree(\alpha)=\tree_H(x)$.

Now suppose that $H$ is finitely generated and $H=N\rtimes\langle t \rangle$ for some normal subgroup $N\lhd H$ and infinite-order element $t\in H$. For simplicity, we denote $\tau(t) \in \D_H$ by $t$ and $\sigma(t)\cdot \tau(t) \in \widetilde \D_H$ by $s$. Note that conjugation by $t$ induces an automorphism $\D_N\rightarrow \D_N,x\mapsto txt^{-1}$.
Indeed, $\D_N$ is the division closure of $\tau(\C N)$ in $\D_G$, and hence $t \D_N t^{-1}$ is the division closure of $t\tau(\C N)t^{-1} = \tau(t\C Nt^{-1}) = \tau(\C N)$. Therefore, $t \D_N t^{-1}$ and $\D_N$ are division closures of the same ring in $\D_G$, and hence coincide.

We now see that the conjugation $A\mapsto sAs^{-1}$ induces an automorphism of $M_n(\D_N)$. Also, similarly to the above proof, one can show that the conjugation $A\mapsto sAs^{-1}$ induces an automorphism of $\widetilde\D_N$. Therefore, we can form $\D_N(\!(t)\!)$, $\widetilde\D_N(\!(s)\!)$ and $M_n(\D_N)(\!(s)\!)$, the rings of twisted Laurent power series with twisting structures given by these conjugation automorphisms. It is a standard fact that $\D_H$ can be identified with a subring of $\D_N(\!(t)\!)$; it quickly follows for example from  \cite{Kielak2020a}*{Proposition 2.23}.   
It is clear that
$M_n(\D_N(\!(t)\!))=M_n(\D_N)(\!(s)\!)$. Hence, the containment \eqref{eq. subring} implies that $\widetilde\D_H$ and $\widetilde\D_N$ can be identified with subrings of $M_n(\D_N)(\!(s)\!)$. The following is essentially \cite{jaikin2020strong}*{Proposition 5.1}.

\begin{proposition}\label{prop. laurent and complexity}
Let $x\in \widetilde\D_H$ and assume that for every $0\neq y\in\widetilde\D_H$ such that $\tree_H(y)<\tree_H(x)$, $y$ is invertible in $\widetilde\D_H$. Then $x\in \widetilde\D_N(\!(s)\!)$.

Moreover, write $x$ as a Laurent power series
\[x=\sum_i x_is^i,\]
where $x_i\in\widetilde\D_N$ for all $i$. Then
\[\tree_H(x_i)\leqslant \tree_H(x)\]
for all $i$, and equality holds for some $i$ if and only if $x=x_is^i$.
\end{proposition}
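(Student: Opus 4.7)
The plan is to proceed by transfinite induction on the well-order on $\mathcal{T}$, with induction variable $\tree_H(x)$. Observe first that the hypothesis on $x$ is inherited by any $y \in \widetilde\D_H$ with $\tree_H(y) \leqslant \tree_H(x)$, since any $z \neq 0$ with $\tree_H(z) < \tree_H(y)$ then satisfies $\tree_H(z) < \tree_H(x)$ and is invertible by the hypothesis on $x$. So the induction hypothesis applies to any such $y$. The base case $x = 0$ is trivial.

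For the inductive step, pick $\alpha \in \rat(\C^{\times}H)$ with $\Phi_H(\alpha) = x$ and $\tree(\alpha) = \tree_H(x)$, and unravel the inductive construction of $\rat(\C^{\times} H)$: $\alpha$ is a positive integer combination of words of the form $u_0 \beta_1^{\diamond} u_1 \cdots \beta_r^{\diamond} u_r$ with $u_j \in \C^{\times}H$ and each $\beta_j$ of strictly smaller tree-complexity than $\alpha$. Writing each $u_j$ as $c_j n_j t^{\ell_j}$ with $c_j \in \C^{\times}$, $n_j \in N$ and $\ell_j \in \Z$, we have $\Phi_H(u_j) = c_j (\sigma\otimes_{\C}\tau)(n_j)\, s^{\ell_j}$, a monomial in $s$ with coefficient in $\widetilde\D_N$. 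Setting $y_j = \Phi_H(\beta_j)$, the induction hypothesis yields $y_j \in \widetilde\D_N(\!(s)\!)$, and the hypothesis on $x$ ensures each non-zero $y_j$ is invertible in $\widetilde\D_H$. Expanding $y_j = \sum_i a_i s^i$ with $a_i \in \widetilde\D_N$ and $a_k$ the leading non-zero coefficient, the invertibility of $y_j$ in $\widetilde\D_H \subseteq M_n(\D_N)(\!(s)\!)$ forces $a_k$ to be invertible in $M_n(\D_N)$; since $\widetilde\D_N$ is the division closure of $(\sigma\otimes_{\C}\tau)(\C N)$ in $M_n(\D_G)$, we get $a_k^{-1} \in \widetilde\D_N$. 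A geometric-series expansion of $y_j^{-1}$ based at its leading term, combined with the fact that conjugation by $s$ preserves $\widetilde\D_N$, then shows $y_j^{-1} \in \widetilde\D_N(\!(s)\!)$. Since $\widetilde\D_N(\!(s)\!)$ is a ring, we conclude $x = \Phi_H(\alpha) \in \widetilde\D_N(\!(s)\!)$.

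For the moreover clause, the coefficients $x_i$ arise as polynomial combinations (built from sums, products, and $s$-conjugates) of $s$-coefficients of the $y_j$, of their leading inverses $a_k^{-1}$, and of elements of $(\sigma\otimes_{\C}\tau)(\C N)$. Using that $\tree$ is a morphism of rational $\C^{\times}H$-semirings together with the complexity bounds provided by induction, one checks that each such combination has $H$-complexity at most $\tree(\alpha) = \tree_H(x)$. The equality case then follows from the strict monotonicity of the tree order on $\mathcal{T}$ under non-trivial sums: if some $\tree_H(x_i) = \tree_H(x)$, then every other monomial contribution in the decomposition of $\alpha$, as well as every other $s$-coefficient of $x$, must collapse, forcing $x = x_i s^i$. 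The main obstacle lies precisely in this last step — carefully tracing how each monomial in $\alpha$'s decomposition contributes to each $s$-coefficient of $x$, and invoking strict monotonicity of the complexity to rule out nontrivial interference in the equality case.
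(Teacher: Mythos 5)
Your strategy is not the paper's: the paper does not reprove this statement from scratch, it verifies that $\widetilde\D_N$ and $\widetilde\D_H$ are the division closures of $(\sigma\otimes_{\C}\tau)(\C N)$ and $(\sigma\otimes_{\C}\tau)(\C H)$ inside $M_n(\D_N)(\!(s)\!)$ and then quotes \cite{jaikin2020strong}*{Proposition 5.1}, which is precisely this proposition. Your attempt to rederive that result by induction on $\tree_H(x)$ has a flawed step and a genuine gap. The flawed step: ``the invertibility of $y_j$ in $\widetilde\D_H\subset M_n(\D_N)(\!(s)\!)$ forces $a_k$ to be invertible in $M_n(\D_N)$'' is not a valid principle, because $M_n(\D_N)$ is not a domain; for instance $E_{11}+E_{22}s$ (matrix units, $\sigma$ trivial) is invertible in $M_2(\D_N)(\!(s)\!)$, with inverse $E_{11}+E_{22}s^{-1}$, while its leading coefficient $E_{11}$ is singular. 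Invertibility of a matrix over $\D_N(\!(t)\!)$ says nothing about its lowest-order coefficient matrix, so this step needs a different justification. The conclusion you want is in fact available from tools you already have -- the induction hypothesis (the ``moreover'' clause applied to $y_j$) bounds $\tree_H$ of every $s$-coefficient of $y_j$ by $\tree_H(y_j)<\tree_H(x)$, whence each non-zero coefficient, in particular $a_k$, is invertible in $\widetilde\D_H$ by the hypothesis on $x$, and $a_k^{-1}\in\widetilde\D_N$ because $\widetilde\D_N$ is division closed in $M_n(\D_G)$ -- but you never make that argument.

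The more serious problem is the ``moreover'' clause, which is the technical heart of the proposition (it is exactly what makes the induction in Theorem \ref{thm. extension} go through): you assert that ``one checks'' the bound $\tree_H(x_i)\leqslant\tree_H(x)$ and then concede that ``the main obstacle lies precisely in this last step''. Proving the bound requires exhibiting, for each $s$-coefficient of each inverse $y_j^{\diamond}$ and of each product of monomials of $\alpha$, an explicit element of $\rat(\C^{\times}H)$ representing it whose complexity is dominated by $\tree(\alpha)$; and the equality case requires a genuine strictness analysis, since $\tree_H$ is a minimum over representatives and is therefore not additive over the $s$-coefficients of $x$ -- strict monotonicity of the order on $\mathcal{T}$ under sums does not by itself exclude interference between distinct monomials contributing to the same coefficient. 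This bookkeeping is the content of \cite{jaikin2020strong}*{Proposition 5.1} (going back to Dicks--Herbera--S\'{a}nchez and Hughes), and the paper deliberately invokes it rather than redoing it; the only work the paper's proof does is the verification that for elements of $\widetilde\D_H$ invertibility in $M_n(\D_N)(\!(s)\!)$ agrees with invertibility in $M_n(\D_G)$ (using \eqref{eq. subring} and that $\D_H$ is a skew field), plus the observation $\tree_H(z)=\tree_H(zs^i)$ for $z\in\widetilde\D_N$. As it stands, your argument establishes at best the containment $x\in\widetilde\D_N(\!(s)\!)$, not the complexity statement, so the proposal is incomplete.
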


\begin{proof}
We will apply \cite{jaikin2020strong}*{Proposition 5.1} with \[\mathcal{A}=M_n(\D_N),\quad \mathcal{P}=M_n(\D_N)(\!(s)\!), \quad \D_{N,\mathcal{P}}=\widetilde\D_N, \quad \D_{H,\mathcal{P}}=\widetilde\D_H\]
in the notation of the proposition. We need to verify that $\widetilde\D_N$ (resp. $\widetilde\D_H$) is the division closure of $(\sigma\otimes_{\C}\tau)(\C N)$ (resp. $(\sigma\otimes_{\C}\tau)(\C H)$) in $M_n(\D_N)(\!(s)\!)$, where we think of $\sigma\otimes_{\C}\tau$ as a ring homomorphism from $\C N$ (resp. $\C H$) to $M_n(\D_N)(\!(s)\!)$.

Consider $\widetilde\D_H$. First, suppose that $y\in\widetilde\D_H$ is invertible in $M_n(\D_N)(\!(s)\!)=M_n(\D_N(\!(t)\!))$. By \eqref{eq. subring},  the entries of the matrix $y$ lie in the skew field $\D_H$. So $y$ is invertible in $M_n(\D_H)$, and thus in $M_n(\D_G)$. So $y$ is invertible in $\widetilde\D_H$. So $\widetilde\D_H$ contains the division closure of $(\sigma\otimes_{\C}\tau)(\C H)$ in $M_n(\D_N)(\!(s)\!)$, say $R$. As a subring of $\widetilde\D_H$, $R$ can be identified with a subring of $M_n(\D_G)$. Let $z\in R$ be such that $z$ is invertible in $M_n(\D_G)$. By \eqref{eq. subring}, $z\in\widetilde\D_H$ is a matrix over $\D_H$. So $z$ is invertible in $M_n(\D_H)$, and thus in $M_n(\D_N(\!(t)\!))$. So $z$ is invertible in $R$. This implies that $R\geqslant \widetilde \D_H$, and therefore finishes the verification that  $\widetilde\D_H$ is the division closure of $(\sigma\otimes_{\C}\tau)(\C H)$ in $M_n(\D_N)(\!(s)\!)$. The verification for $\widetilde\D_N$ is similar and straightforward.

By \cite{jaikin2020strong}*{Proposition 5.1}, we have $x\in\widetilde\D_N(\!(s)\!)$. Write $x$ as a Laurent power series
$x=\sum_i x_is^i$, where $x_i\in\widetilde\D_N$ for all $i$. Now, \cite{jaikin2020strong}*{Proposition 5.1} implies that $\tree_H(x_is^i)\leqslant \tree_H(x)$ for all $i$, and equality holds for some $i$ if and only if $x=x_is^i$. To finish the proof, simply note that for all $z\in\widetilde\D_N$ and all $i$, $\tree_H(z)=\tree_H(zs^i)$.
\end{proof}

\section{Agrarian invariants}\label{sec. agrarian notions}
The notion of agrarian groups was introduced in \cite{kielak2018bieri}, but the idea dates back to Malcev \cite{malcev1948embedding}. The notion of agrarian invariants was later introduced and studied by Henneke and the first author in \cite{henneke2018agrarian}.

\subsection{Agrarian maps}
Let $G$ be a group. An \textit{agrarian map} of $G$ is a finite dimensional left linear representation $\sigma\colon G\rightarrow \GL_n(D)$ of $G$ over a skew field $D$.

\begin{remark}
In \cite{henneke2018agrarian}, the authors define an agrarian map to be a $1$-dimensional representation $G\rightarrow \GL_1(D)$ over a skew field. As we will see in Examples \ref{eg. l2 betti} and \ref{eg. alexander norm}, general finite-dimensional representations arise naturally in the study of twisted invariants. We therefore generalize the work of \cite{henneke2018agrarian} and define an agrarian map to be a general finite-dimensional representation.
\end{remark}

\subsection{Rationalization}\label{sec. rationalization}
If $G$ is finitely generated, then it has a maximal free abelian quotient denoted $G_{\mathrm{fab}}$. Let $q\colon G\twoheadrightarrow G_{\mathrm{fab}}$ be the natural quotient map, and let $D G_{\mathrm{fab}}$ and $\Q G_{\mathrm{fab}}$ be the untwisted group rings. View $q\colon G\rightarrow \ore(\Q G_{\mathrm{fab}})$ as another representation and form the tensor product representation
\[\sigma\otimes_{\Z} q\colon G\rightarrow \GL_n(\ore(DG_{\mathrm{fab}})),~ (\sigma\otimes_{\Z} q)(g)=\sigma(g)q(g),\]
called the \textit{rationalization} of $\sigma$.

For the rest of this subsection suppose $n=1$, i.e., we have a homomorphism $\sigma \colon G\rightarrow D^{\times}$. In \cites{henneke2018agrarian}, Henneke and the first author introduce the following rationalization of the agrarian map $\sigma$. Let $K=\ker q$. As explained in Example \ref{eg. twisted group ring}, by picking a set-theoretic section $s\colon G_{\mathrm{fab}}\rightarrow G$, we obtain a twisted group ring $(\Z K)G_{\mathrm{fab}}$ with a natural isomorphism $\Z G\cong (\Z K) G_{\mathrm{fab}}$. The maps $s$ and $\sigma$ together induce a twisted group ring structure $D \ast G_{\mathrm{fab}}$ (here the notation is used to distinguish the twisted group ring $D \ast G_{\mathrm{fab}}$ from the untwisted group ring $D G_{\mathrm{fab}}$). The restriction $\sigma|_{\Z K}:\Z K\rightarrow D$ naturally induces a ring homomorphism $(\Z K) G_{\mathrm{fab}}\rightarrow D \ast G_{\mathrm{fab}}$ between the twisted group rings. Note that there is a natural embedding $D\ast G_{\mathrm{fab}}\hookrightarrow \ore(D\ast G_{\mathrm{fab}})$. By definition, the \textit{HK-rationalization} of $\sigma$, denoted $\ts\colon \Z G\rightarrow \ore(D \ast G_{\mathrm{fab}})$, is the composition \[\Z G\cong (\Z K) G_{\mathrm{fab}}\rightarrow D\ast G_{\mathrm{fab}}\rightarrow \ore(D\ast G_{\mathrm{fab}}).\]

The following Lemma \ref{lem. equivalent rationalization} can be easily extracted from the proof of \cite{jaikin2020universality}*{Proposition 3.5}. We provide the proof for the convenience of the reader. Roughly speaking,  \cref{lem. equivalent rationalization} implies that $\sigma\otimes_{\Z}q$ and $\ts$ are equivalent for the purpose of defining agrarian invariants. The reader is referred to Remark \ref{rm. equivalent rationalization} below for the precise meaning of this equivalence.

\begin{lemma}\label{lem. equivalent rationalization}
There is an isomorphism $\alpha\colon \ore(D\ast G_{\mathrm{fab}}) \rightarrow \ore(DG_{\mathrm{fab}})$ such that the following hold.
\begin{enumerate}[label=(\roman*)]
    \item\label{item. equivalent rationalization 1} $\alpha\circ\ts=\sigma\otimes_{\Z}q$.
    \item\label{item. equivalent rationalization 2}
    For all $x\in D   G_{\mathrm{fab}}$, view $x$ and $\alpha(x)$ as functions $x,\alpha(x)\colon G_{\mathrm{fab}}\rightarrow D$. Then $\supp(x)=\supp(\alpha(x))$.
\end{enumerate}

\end{lemma}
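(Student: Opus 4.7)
The plan is to write down the isomorphism $\alpha$ explicitly on the twisted group ring level, verify it is a ring isomorphism by a direct computation, and then extend to the Ore localizations via their universal property. The key idea is that the two rationalizations differ only by absorbing the section $s$ into the coefficient ring.

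First I would compute $\ts(g)$ explicitly for $g \in G$. Under the isomorphism $\Z G \cong (\Z K) G_{\mathrm{fab}}$ from Example \ref{eg. twisted group ring}, since $g \cdot s(q(g))^{-1} \in K$, we have $g \mapsto (g \cdot s(q(g))^{-1}) \ast q(g)$. Applying $\sigma|_{\Z K}$ to the coefficient yields
\[ \ts(g) = \sigma(g)\sigma(s(q(g)))^{-1} \ast q(g) \quad \in D \ast G_{\mathrm{fab}}. \]
The structure functions on $D \ast G_{\mathrm{fab}}$ induced from those on $(\Z K) G_{\mathrm{fab}}$ are $c_h(d) = \sigma(s(h))\, d\, \sigma(s(h))^{-1}$ and $\tau(h,h') = \sigma(s(h)s(h')s(hh')^{-1})$.

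Motivated by the formula for $\ts(g)$, I would then define a map $\alpha \colon D \ast G_{\mathrm{fab}} \to DG_{\mathrm{fab}}$ by
\[ \alpha(d \ast h) = d \cdot \sigma(s(h)) \cdot h, \]
extended additively; conversely, define $\alpha^{-1}(d \cdot h) = d\,\sigma(s(h))^{-1} \ast h$. That $\alpha$ is a ring homomorphism is a direct expansion: for $d \ast h$ and $d' \ast h'$, the left-hand side $\alpha\bigl((d \ast h)(d' \ast h')\bigr)$ expands using $c_h$ and $\tau$ to
\[ d\,\sigma(s(h))\, d'\, \sigma(s(h))^{-1} \cdot \sigma(s(h)s(h')s(hh')^{-1}) \cdot \sigma(s(hh')) \cdot hh', \]
and the two pairs $\sigma(s(h))^{-1}\sigma(s(h))$ and $\sigma(s(hh'))^{-1}\sigma(s(hh'))$ cancel, leaving $d\,\sigma(s(h))\, d'\, \sigma(s(h'))\, hh'$, which (since $DG_{\mathrm{fab}}$ is untwisted, so coefficients commute with group elements) equals $\alpha(d \ast h) \cdot \alpha(d' \ast h')$. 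A parallel computation shows $\alpha^{-1}$ is a ring homomorphism, and $\alpha \circ \alpha^{-1}$ and $\alpha^{-1} \circ \alpha$ are clearly the identity on generators. Hence $\alpha$ is a ring isomorphism, and by the universal property of Ore localization it lifts to an isomorphism $\ore(D \ast G_{\mathrm{fab}}) \to \ore(DG_{\mathrm{fab}})$.

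Finally I would check the two claimed properties. For \ref{item. equivalent rationalization 1}, applying $\alpha$ to the formula for $\ts(g)$ gives
\[ \alpha(\ts(g)) = \sigma(g)\,\sigma(s(q(g)))^{-1}\,\sigma(s(q(g)))\, q(g) = \sigma(g)\, q(g) = (\sigma \otimes_{\Z} q)(g). \]
For \ref{item. equivalent rationalization 2}, observe that $\alpha$ acts on a general element $x = \sum_{h} d_h \ast h$ by $\alpha(x) = \sum_h (d_h \sigma(s(h))) \cdot h$; since $\sigma(s(h)) \in D^\times$, the coefficient $d_h$ is nonzero if and only if $d_h \sigma(s(h))$ is nonzero, so $\supp(x) = \supp(\alpha(x))$. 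No step presents a real obstacle; the whole argument is a guided bookkeeping exercise whose shape is dictated by demanding that \ref{item. equivalent rationalization 1} hold on the nose.
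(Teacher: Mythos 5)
Your proposal is correct and is essentially identical to the paper's own proof: the same explicit map $\alpha(d\ast h)=d\cdot\sigma(s(h))\cdot h$, the same direct verification that it is multiplicative with the same cancellations, the same inverse $d h\mapsto d\,\sigma(s(h))^{-1}\ast h$, and the same passage to the Ore localizations followed by the check of items \ref{item. equivalent rationalization 1} and \ref{item. equivalent rationalization 2}. The only cosmetic difference is that the paper also spells out that the unit $\sigma(s(1))^{-1}\ast 1$ of $D\ast G_{\mathrm{fab}}$ is sent to $1$, which your bijectivity argument covers implicitly.
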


\begin{proof}
For all $d\in D$ and $h\in G_{\mathrm{fab}}$, let
\[\alpha(d\ast h)=d\cdot \sigma(s(h))\cdot h,\]
where the product $d\ast h$ on the left-hand side is considered as an element of $D \ast G_{\mathrm{fab}}$, and the right-hand side product is considered as an element of $DG_{\mathrm{fab}}$. By extending $\alpha$ linearly across $D \ast G_{\mathrm{fab}}$ we get a map $\alpha\colon D \ast G_{\mathrm{fab}} \rightarrow DG_{\mathrm{fab}}$. Item \ref{item. equivalent rationalization 2} follows immediately.

We check that $\alpha$ is a ring homomorphism. First, $\alpha$ obviously preserves addition. Second, note that the identity of $D\ast G_{\mathrm{fab}}$ is $\sigma(s(1))^{-1}\ast 1$. Indeed, for all $d\in D$ and $h\in G_{\mathrm{fab}}$,
\begin{align*}
    \Big(\sigma\big(s(1)\big)^{-1}\ast 1\Big)\cdot (d\ast h)
    &=\Big(\sigma\big(s(1)\big)^{-1}\sigma\big(s(1)\big)d\sigma\big(s(1)\big)^{-1}\sigma\big(s(1)\big)\sigma\big(s(h)\big)\sigma\big(s(h)\big)^{-1}\Big)\ast h\\
    &=d\ast h.
\end{align*}
Since $\alpha(\sigma(s(1))^{-1}\ast 1)=\sigma(s(1))^{-1}\cdot \sigma(s(1))\cdot 1=1$, the function $\alpha$ preserves the identity.

Third, for all $d_1,d_2\in D$ and $h_1,h_2\in G_{\mathrm{fab}}$,
\begin{align*}
    \alpha\big((d_1\ast h_1)\cdot(d_2\ast h_2)\big)
    &=\alpha\Big(\Big(d_1\sigma\big(s(h_1)\big)d_2\sigma\big(s(h_1)\big)^{-1}\sigma\big(s(h_1)\big)\sigma\big(s(h_2)\big)\sigma\big(s(h_1h_2)\big)^{-1}\Big)\ast (h_1h_2)\Big)\\
    &=d_1\sigma\big(s(h_1)\big)d_2\sigma\big(s(h_2)\big)\cdot (h_1h_2), \\
    \\
\alpha(d_1\ast h_1)\cdot\alpha(d_2\ast h_2) &=d_1\sigma\big(s(h_1)\big)h_1\cdot d_2\sigma\big(s(h_2)\big)h_2 \\
&=d_1\sigma\big(s(h_1)\big)d_2\sigma\big(s(h_2)\big)\cdot (h_1h_2).
\end{align*}
Thus, $\alpha$ preserves multiplication. This show that $\alpha$ is a ring homomorphism.

We define an inverse of $\alpha$ as follows. Let $\beta\colon DG_{\mathrm{fab}}\rightarrow D \ast G_{\mathrm{fab}}$ be given by setting
\[\beta(dh)=\Big(d\sigma\big(s(h)\big)^{-1}\Big)\ast h,\]
for all $d\in D,h\in G_{\mathrm{fab}}$,
and then extending  linearly across $DG_{\mathrm{fab}}$. It is easy to check that $\beta$ is indeed the inverse of $\alpha$, and thus $\alpha\colon D\ast G_{\mathrm{fab}} \rightarrow DG_{\mathrm{fab}}$ is a ring isomorphism.

Therefore, $\alpha$ extends to a ring isomorphism between the Ore localizations. It remains to check item \ref{item. equivalent rationalization 1}. Let $g\in G$, let $h=q(g)\in G_{\mathrm{fab}}$. Then
\[\alpha(\ts(g))=\alpha(\sigma(gs(h)^{-1})\ast h)=\sigma(gs(h)^{-1})\sigma(s(h))\cdot h=\sigma(g)\cdot h=\sigma(g)q(g).\]
as desired.
\end{proof}

\subsection{Agrarian Betti numbers}
In the sequel, tensor products happen between left and right modules, i.e., if $R$ is a ring, then $M\otimes_R N$ is the tensor product of a right $R$-module $M$ and a left $R$-module $N$. Moreover, if $S\subset M,T\subset N$ are not submodules, then we define
\[S\otimes_R T=\{s\otimes t\mid s\in S, t\in T\}.\]

Let $(C_{\ast},\partial_{\ast})$ be a chain complex of free right $\Z G$-modules. Consider the left $M_n(D)$-module $D^n$ of column vectors, and note that $\sigma$ endows $D^n$ with the structure of a left $G$-module with action given by $g\cdot x=\sigma(g)(x)$.

Tensoring $C_{\ast}$ with $D^n$ over $\Z G$ gives rise to a chain complex $C_{\ast}\otimes_{\Z G} D^n$, which is a complex of right $D$-modules. So $H_{\ast}(C_{\ast}\otimes_{\Z G} D^n)$ is also a right $D$-module. The $i^{th}$ $\sigma$\textit{-agrarian Betti number} of $C_{\ast}$ is
\[b^{\sigma}_i(C_{\ast})=\dim_D H_i(C_{\ast}\otimes_{\Z G} D^n).\]

If $C_{\ast}$ is a projective resolution of $\Z$ over $\Z G$, then we obtain the $i^{th}$ $\sigma$\textit{-agrarian Betti number} of $G$:
\[b^{\sigma}_i(G)=\dim_D H_i(G, D^n)\]
where the homology is computed with respect to the representation $\sigma$. 

\begin{example}[Twisted $\ell^2$-Betti numbers]\label{eg. l2 betti}
Suppose that $G$ is locally indicable (twisted $\ell^2$-Betti numbers can be defined for any group, but for the purpose of this paper we restrict ourselves to locally indicable ones). Let $\eta\colon G\rightarrow \GL_n(\C)$ be a finite-dimensional complex representation. Recall that $\D_G$ denotes the Linnell skew field of $G$ and $\tau\colon \Z G\rightarrow \D_G$ denotes the natural embedding. Let
\[\sigma=\eta\otimes_{\C}\tau \colon G\rightarrow \GL_n(\D_G),~\sigma(g)=\eta(g)\tau(g).\]
Then the $i^{th}$ \textit{twisted} $\ell^2$\textit{-Betti number} of $C_{\ast}$ with respect to $\eta$ is
\[b^{(2),\eta}_{i}(C_{\ast})=b^{\sigma}_i(C_{\ast}).\]
If $\eta$ is the trivial representation $G\rightarrow \GL_1(\C)$ that sends every $g\in G$ to $1$, then we denote $b^{(2),\eta}_{i}(C_{\ast})$ by $b^{(2)}_i(C_{\ast})$ and call it the $i^{th}$ \textit{(untwisted)} $\ell^2$\textit{-Betti number} of $C_{\ast}$.
\end{example}

\begin{lemma}
Suppose that $G$ is finitely generated with $G_{\mathrm{fab}}$ non-trivial. Then $b^{\sigma\otimes_{\Z} q}_0(G)=0$.
\end{lemma}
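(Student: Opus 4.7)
The plan is to reduce the statement to showing that there exists a single $g \in G$ for which the matrix $M := \sigma(g) q(g) - I_n$ acts invertibly on $E^n$, where $E = \ore(DG_{\mathrm{fab}})$. Indeed, the $0$th homology of $C_{\ast}\otimes_{\Z G} E^n$ for a free resolution $C_{\ast} \to \Z$ is the quotient of $E^n$ by the submodule spanned by all elements $(\sigma(g')q(g') - I_n)\, v$ with $g' \in G$, $v \in E^n$, so any one surjective such operator already forces this quotient to vanish.

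To produce such a $g$, I would use that $G_{\mathrm{fab}}$, being a non-trivial finitely generated free abelian group, admits a primitive element $t$, and pick a lift $g \in G$ with $q(g) = t$. Choosing a direct complement $H$ so that $G_{\mathrm{fab}} = H \oplus \langle t \rangle$ and setting $F = \ore(DH)$ (with the convention $F = D$ when $H$ is trivial), I'd use the centrality of $t$ in the untwisted group ring $DG_{\mathrm{fab}} = DH[t^{\pm}]$ to set up the chain of skew-field embeddings
\[E = \ore(DG_{\mathrm{fab}}) \hookrightarrow F(t) \hookrightarrow F(\!(u)\!),\]
where $u = t^{-1}$ and the second arrow comes from the standard geometric-series trick that inverts any nonzero element of $F[t^{\pm}]$ inside $F(\!(u)\!)$; the first arrow then exists by the universal property of the Ore localization (and is injective because $F(t)$ is a skew field).

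Next I would check invertibility of $M = \sigma(g)\,t - I_n$ over $F(\!(u)\!)$. Multiplying on the right by the unit $u$ gives
\[M \cdot u = \sigma(g) - u\, I_n = \sigma(g)\bigl(I_n - u\,\sigma(g)^{-1}\bigr),\]
and the entries of $-\sigma(g)^{-1}$ lie in $D \subset F(\!(u)\!)$, hence have $u$-order $\geqslant 0$. \cref{lem. matrix reduction} (with $u$ playing the role of $t$ in the lemma) then forces $I_n - u\,\sigma(g)^{-1}$ to be invertible in $M_n(F(\!(u)\!))$, and hence so is $M$.

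Finally, to pass invertibility back to $E$: both $E$ and $F(\!(u)\!)$ are skew fields with $E \subset F(\!(u)\!)$, so any $E$-linear dependence among the columns of $M \in M_n(E)$ would automatically be an $F(\!(u)\!)$-linear dependence; since the columns of $M$ are independent over the larger field, they are independent over $E$, and so $M$ is invertible over $E$ as required. I do not expect a real obstacle here --- the only step needing care is the existence of the embedding $E \hookrightarrow F(\!(u)\!)$, and the rest is a direct application of \cref{lem. matrix reduction} together with the skew-field linear-algebra argument above.
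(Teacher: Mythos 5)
Your proof is correct and follows essentially the same route as the paper: both arguments reduce the vanishing of $b^{\sigma\otimes_{\Z}q}_0(G)$ to the invertibility of $\mathrm{Id}-\sigma(g)q(g)$ for a single element $g$ whose image $q(g)$ is primitive in $G_{\mathrm{fab}}$, and both establish that invertibility by embedding $\ore(DG_{\mathrm{fab}})$ into a skew field of Laurent power series and invoking \cref{lem. matrix reduction}; the paper just phrases the reduction via the presentation complex of a suitable presentation (restricting $\partial_1$ to the edge of a generator mapping to a basis element) rather than via coinvariants. Your detour through $u=t^{-1}$ is harmless but unnecessary, since $\mathrm{Id}-\sigma(g)\,t$ is already of the form $\mathrm{Id}+N\cdot t$ with $N=-\sigma(g)$ having entries of $t$-order $0$, exactly as in the paper's application of the lemma.
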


\begin{proof}
There exists a presentation $G=\langle X \mid \mathcal{R} \rangle$ with an element $x\in X$ such that $q(x)$ is an element of a basis $Y$ of $G_{\mathrm{fab}}$. Below, we denote $\ore(DG_{\mathrm{fab}})$ by $D(Y)$ and when we talk about the $q(x)$-order, we mean the order with respect to $Y$. Note that the action of $G$ on $(D(Y))^n$ is given by the following: elements $v\in (D(Y))^n$ are column vectors, and we have $g\cdot v= \sigma(g)\cdot q(g)\cdot v$, i.e., the vector obtained by multiplying $v$ by the matrix $\sigma(g)\cdot q(g)$ on the left.

Construct a $K(G,1)$ CW-complex $BG$ from the presentation complex of $G=\langle X \mid \mathcal{R} \rangle$ by adding cells of dimension greater than or equal to $3$, and let $EG$ be the universal cover of $BG$. Consider the cellular chain complex of $EG$ consisting of right $G$-modules:
\[\cdots\rightarrow C_1\xrightarrow[]{\partial_1}C_0\rightarrow 0.\]

Let $p$ be the unique $0$-dimensional cell of $BG$ and let $e$ be the edge of $BG$ labeled by $x$. There exists a lift $\widetilde{p}$ (resp. $\widetilde{e}$) of $p$ (resp. $e$) such that
\[\partial_1(\widetilde{e})=\widetilde p\cdot (1-x).\]

Consider the map
\[\partial_1\otimes_{\Z G} \mathrm{id}_{(D(Y))^n}\colon C_1\otimes_{\Z G} ((D(Y))^n\rightarrow C_0\otimes_{\Z G} (D(Y))^n.\]
Let 
\[\partial'_1\colon  (\widetilde{e}\cdot(\Z G))\otimes_{\Z G} ((D(Y))^n\rightarrow C_0\otimes_{\Z G} ((D(Y))^n\]
be the restriction of $\partial_1\otimes_{\Z G} \mathrm{id}_{((D(Y))^n}$. Note that for every column vector $v\in (D(Y))^n$, we have
\[\partial'_1(\widetilde e\otimes v)=(\widetilde p \cdot(1-x))\otimes v=\widetilde p \otimes (v-\sigma(g)\cdot q(g)\cdot v).\]
Let $\mathcal{B}$ be the standard $D(Y)$-basis of $((D(Y))^n$. Then under the bases $\{\widetilde{e}\}\otimes_{\Z G} \mathcal{B}$ and $\{\widetilde{p}\}\otimes_{\Z G} \mathcal{B}$, the matrix representative of $\partial'_1$ has the form
\[\mathrm{Id}-\sigma(x)\cdot q(x).\]
Since $\sigma(x)$ is a matrix over $D$, each entry of $\sigma(x)q(x)$ has $q(x)$-order at least $1$. So \cref{lem. matrix reduction} implies that $\mathrm{Id}-\sigma(x)\cdot q(x)$ is invertible, and thus $\partial'_1$ is surjective. It follows that $\partial_1\otimes_{\Z G} \mathrm{id}_{(D(Y))^n}$ is surjective as well and thus $b^{\sigma\otimes_{\Z} q}_0(G)=0$.
 \end{proof}

The $\sigma$\textit{-agrarian Euler characteristic} of $C_{\ast}$ is
\[\chi^{\sigma}(C_{\ast})=\sum^{\infty}_{i=0}(-1)^i b^{\sigma}_i(C_{\ast})\]
provided that the sum is well-defined, i.e., only finitely many of the values $b^{\sigma}_i(C_{\ast})$ are non-zero, and all non-zero terms are finite.

If in addition $C_{\ast}$ is a projective resolution of $\Z$ over $\Z G$, then we obtain the $\sigma$\textit{-agrarian Euler characteristic} of $G$:
\[\chi^{\sigma}(G)=\sum^{\infty}_{i=0}(-1)^i b^{\sigma}_i(G).\]

\begin{proposition}\label{prop. euler characteristic}
If $C_{\ast}$ is \emph{finite}, i.e., each $C_i$ has finite rank and there are only finitely many non-zero modules $C_i$, then 
\[\sum^{\infty}_{i=0}(-1)^i b^{\sigma}_i(C_{\ast})=n\cdot \sum^{\infty}_{i=0}(-1)^i\rank_{\Z G}C_i=n\cdot \chi(C_{\ast}),\]
where $\chi(C_{\ast})$ denotes the usual Euler characteristic of $C_{\ast}$. In particular, 
\[\chi^{\sigma}(G)=n\cdot\chi(G)\]
if $G$ is of type $F$.
\end{proposition}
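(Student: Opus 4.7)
The plan is to reduce the statement to a standard fact from linear algebra over the skew field $D$. Since $C_\ast$ is a finite chain complex of free right $\Z G$-modules, write $C_i \cong (\Z G)^{r_i}$ where $r_i = \rank_{\Z G} C_i$. Tensoring with $D^n$ over $\Z G$ commutes with finite direct sums, so
\[
C_i \otimes_{\Z G} D^n \;\cong\; (\Z G)^{r_i} \otimes_{\Z G} D^n \;\cong\; (D^n)^{r_i} \;\cong\; D^{n r_i}
\]
as right $D$-modules. Thus $C_\ast \otimes_{\Z G} D^n$ is a finite chain complex of finite-dimensional right $D$-vector spaces, with $\dim_D(C_i \otimes_{\Z G} D^n) = n\cdot r_i$.

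Next I would invoke the standard fact that for any finite chain complex $V_\ast$ of finite-dimensional right vector spaces over a skew field $D$, the alternating sum of dimensions equals the alternating sum of homology dimensions:
\[
\sum_i (-1)^i \dim_D V_i \;=\; \sum_i (-1)^i \dim_D H_i(V_\ast).
\]
This is proved exactly as in the commutative case: the rank–nullity theorem for each boundary map $\partial_i\colon V_i \to V_{i-1}$ gives $\dim_D V_i = \dim_D \ker \partial_i + \dim_D \im \partial_i$, and summing with alternating signs while using $\dim_D H_i(V_\ast) = \dim_D \ker \partial_i - \dim_D \im \partial_{i+1}$ yields the claim. Applying this with $V_i = C_i \otimes_{\Z G} D^n$ gives
\[
\sum_{i=0}^\infty (-1)^i b_i^\sigma(C_\ast) \;=\; \sum_{i=0}^\infty (-1)^i \dim_D(C_i \otimes_{\Z G} D^n) \;=\; \sum_{i=0}^\infty (-1)^i n \cdot r_i \;=\; n \cdot \chi(C_\ast),
\]
which is the displayed chain of equalities.

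For the final statement, if $G$ is of type $F$ then by definition $\Z$ admits a finite free resolution $C_\ast$ over $\Z G$, so the previous paragraph applies and yields $\chi^\sigma(G) = n \cdot \chi(G)$. There is no genuine obstacle here: the only point requiring a bit of care is the compatibility of tensor product with finite direct sums and the fact that the rank–nullity identity is valid over a skew field (where left/right vector spaces have a well-defined dimension), both of which are standard.
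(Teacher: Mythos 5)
Your proof is correct and follows essentially the same route as the paper: both reduce to the observation that $C_i\otimes_{\Z G}D^n$ is a right $D$-vector space of dimension $n\cdot\rank_{\Z G}C_i$ and then apply the standard alternating-sum identity for finite complexes of finite-dimensional vector spaces over a skew field. The paper merely packages your rank–nullity step as an explicit splitting of each $C_i\otimes_{\Z G}D^n$ into boundaries, homology, and a complement mapped isomorphically onto the boundaries below.
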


\begin{proof}
For all $i$, decompose $C_i\otimes_{\Z G} D^n$ as 
\[C_i\otimes_{\Z G} D^n=B_i\oplus H_i\oplus C'_i,\]
where $B_i$ is the image of $\partial_i\otimes_{\Z G} \mathrm{id}_{D^n}$, $H_i\cong H_i(C_{\ast}\otimes_{\Z G} D^n)$, and $\partial_i\otimes_{\Z G} \mathrm{id}_{D^n}$ maps $C'_i$ isomorphically onto $B_{i-1}$ and restricts to the trivial map on $B_i \oplus H_i$. Then
\[\dim_D B_i=\dim_D C'_{i+1}.\]
Therefore,
\[\sum^{\infty}_{i=0}(-1)^i \dim_D H_i=\sum^{\infty}_{i=0} (-1)^i \dim_D (C_i\otimes_{\Z G} D^n)=n\cdot \sum^{\infty}_{i=0} (-1)^i  \rank_{\Z G}(C_i).\qedhere\]
\end{proof}

\begin{proposition}\label{prop. mapping torus and vanishing betti number}
Suppose that $G$ is finitely generated and is the fundamental group of the mapping torus $T_f$ of a cellular self-map $f\colon Y\rightarrow Y$ of a  connected CW-complex $Y$ with finite $d$-skeleton. Let $C_{\ast}$ be the cellular chain complex of the universal cover $\widetilde{T_f}$ of $T_f$. Then for $i\leqslant d$,
\[b^{\sigma\otimes_{\Z} q}_i(C_{\ast})=0.\]
\end{proposition}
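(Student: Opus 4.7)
The plan is to identify $C_\ast$ with the algebraic mapping cone of a chain self-map of the $\Z G$-complex assembled from lifts of cells of $Y$, and to show via \cref{lem. matrix reduction} that this self-map becomes a chain isomorphism after tensoring with the coefficient module $D^n$, where here $D=\ore(DG_{\mathrm{fab}})$ (for brevity I keep the symbol $D$ but mean the rationalization).

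\emph{Mapping cone structure.} The mapping torus carries a natural CW structure consisting of the cells of $Y$ together with one cylinder cell $\bar e$ of dimension $k+1$ for each $k$-cell $e$ of $Y$. Choose compatible lifts to $\widetilde{T_f}$ and let $V_k$ be the free $\Z G$-module generated by the chosen lifts of the $k$-cells of $Y$; because $Y$ has finite $d$-skeleton, $V_k$ is finitely generated for $k\le d$. As $\Z G$-modules one has $C_k=V_k\oplus V_{k-1}$, with the second summand corresponding to cylinder cells. Let $t\in G$ be the element arising from the $S^1$-direction of $T_f$, and let $F\colon V_\ast\to V_\ast$ be the $\Z G$-chain map recording how $f$ acts on the chosen lifts. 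Inspecting cellular boundaries, $C_\ast$ is the algebraic mapping cone of
\[\psi:=1-tF\colon V_\ast\longrightarrow V_\ast.\]

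\emph{Order of entries.} The map $T_f\to S^1$ sends $Y$ to a point, so the composition $\pi_1(Y)\to G\to\Z$ vanishes and $L:=\im\bigl(\pi_1(Y)\to G\bigr)$ lies in $\ker(G\to\Z)$. Since $G\to\Z$ factors through $q\colon G\to G_{\mathrm{fab}}$ and $t$ maps to a generator of $\Z$, the element $q(t)$ is primitive in $G_{\mathrm{fab}}$. Fix a basis $\bar Y=\{y_1,\dots,y_r\}$ of $G_{\mathrm{fab}}$ with $y_1=q(t)$ and embed $\ore(DG_{\mathrm{fab}})\hookrightarrow E(\!(y_1)\!)$ as in \cref{sec. order} with $E=\ore(D\langle y_2,\dots,y_r\rangle)$. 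Every entry of the matrix of $F_k$ lies in $\Z L$, and since $q(L)$ is contained in the subgroup generated by $y_2,\dots,y_r$, its image under $\sigma\otimes q$ is an $n\times n$ block whose entries have $y_1$-order $\ge 0$.

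\emph{Invertibility.} Applying $\sigma\otimes q$ entry-wise to the matrix $I-tF_k$ representing $\psi_k$ yields
\[I-\bigl(\sigma(t)\cdot y_1\bigr)\cdot(\sigma\otimes q)(F_k)=I-N\cdot y_1,\]
where $N$ has every entry of $y_1$-order $\ge 0$. Viewed inside $E(\!(y_1)\!)$, \cref{lem. matrix reduction} applies and shows this matrix is invertible; since $\ore(DG_{\mathrm{fab}})\hookrightarrow E(\!(y_1)\!)$ is an inclusion of skew fields and the Dieudonn\'e determinant is computed by the same algorithm, the matrix is invertible already over $\ore(DG_{\mathrm{fab}})$. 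Thus $\psi_k\otimes\mathrm{id}_{D^n}$ is an isomorphism for every $k\le d$.

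\emph{Conclusion and main obstacle.} Given a cycle $(x,y)\in(V_i\oplus V_{i-1})\otimes_{\Z G}D^n$ with $i\le d$, set $y':=(\psi_i\otimes\mathrm{id})^{-1}(x)\in V_i\otimes D^n$. Using that $\psi$ is a chain map (so $\psi_{i-1}^{-1}\circ\partial^V=\partial^V\circ\psi_i^{-1}$) together with the cycle condition $\partial^V x=-\psi_{i-1}(y)$, a direct computation shows that $(0,y')$ has mapping cone boundary exactly $(x,y)$. Hence $H_i(C_\ast\otimes_{\Z G}D^n)=0$ for all $i\le d$. The substantive point is the order statement in the second step: showing that $L\subset\ker(G\to\Z)$ forces the $y_1$-factor $q(t)$ to appear on the correct side of each entry of $(\sigma\otimes q)(tF_k)$ so that \cref{lem. matrix reduction} becomes applicable. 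The mapping cone identification is a routine unpacking of the CW structure of $T_f$, and the homological step is the standard vanishing for cones of chain isomorphisms.
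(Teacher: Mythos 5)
Your proof is correct and takes essentially the same route as the paper: both split $C_\ast$ into the cells of $Y$ and the cylinder cells, recognize the cylinder-to-cell block of the boundary as $\mathrm{Id}$ minus a matrix all of whose entries have positive $q(t)$-order, and apply \cref{lem. matrix reduction}, with your explicit mapping-cone contraction merely replacing the paper's dimension count in the final step. The only point you should make explicit is that $y_2,\dots,y_r$ are chosen to be a basis of $\ker(G_{\mathrm{fab}}\to\Z)$ (not just any completion of $y_1=q(t)$ to a basis of $G_{\mathrm{fab}}$), since otherwise $q(L)\subset\langle y_2,\dots,y_r\rangle$ can fail and some entries could acquire negative $y_1$-order.
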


\begin{proof}
For each $i$, by lifting each $i$-cell of $Y\subset T_f$ to an $i$-cell in the universal cover $\widetilde{T_f}$, we obtain a set $\B_i\subset C_i$. Let
\[\A_{i+1}=\{\Delta\times[0,1]\mid \Delta\in \B_i\},\]
\[A_{i+1}=\Span_{\Z G}\A_i,~~B_i=\Span_{\Z G}\B_i.\]
(Here the subscript keeps track of dimension and so we use $\A_{i+1}$ instead of $\A_i$.) Then $\A_i\cup\B_i$ is a $\Z G$-basis of $C_i=A_i\oplus B_i$. Let $V$ be the standard basis of $(\ore(D G_\mathrm{fab}))^n$. Then $\A_i\otimes_{\Z G} V$ (resp. $\B_i\otimes_{\Z G} V$) is an $\ore(D G_\mathrm{fab})$-basis of $A_i\otimes_{\Z G} (\ore(D G_\mathrm{fab}))^n$ (resp. $B_i\otimes_{\Z G} (\ore(D G_\mathrm{fab}))^n$).

Now suppose $i\leqslant d$. Let 
\[P_i\colon C_i\otimes_{\Z G} (\ore(D G_\mathrm{fab}))^n\rightarrow B_i\otimes_{\Z G} (\ore(D G_\mathrm{fab}))^n\]
be the projection corresponding to the direct sum decomposition 
\[C_i\otimes_{\Z G} (\ore(D G_\mathrm{fab}))^n=(A_i\otimes_{\Z G} (\ore(D G_\mathrm{fab}))^n)\oplus (B_i\otimes_{\Z G} (\ore(D G_\mathrm{fab}))^n),\]
and let $\partial'_{i+1}$ be the restriction of $\partial_{i+1}\otimes_{\Z G} \mathrm{id}_{(\ore(D G_\mathrm{fab}))^n}$ to $A_{i+1}$. 

The natural map $T_f\rightarrow S^1$ that maps $Y$ to a single point induces a group homomorphism $G\rightarrow \Z$, which factors through a homomorphism $G_{\mathrm{fab}}\rightarrow \Z$. Let $X$ be a basis of the free abelian group $G_{\mathrm{fab}}$ such that there exists $t\in X$ that is mapped to a generator of $\Z$ by the homomorphism $G_{\mathrm{fab}}\rightarrow \Z$. Below, we denote $\ore(DG_{\mathrm{fab}})$ by $D(X)$ and when we talk about $t$-order, we mean the order with respect to the basis $X$.

The matrix representative of $P_i\circ \partial'_{i+1}$ under the bases $\A_{i+1}\otimes_{\Z G} V$ and $\B_i\otimes_{\Z G} V$ has the form $\mathrm{Id}+M\cdot t$ where $M$ is a matrix over $D[X\smallsetminus\{t\}]$. In particular, each entry of $M\cdot t$ has $t$-order at least $1$. By Lemma \ref{lem. matrix reduction}, $P_i\otimes_{\Z G} \partial'_{p+1}$ is invertible. In particular, 
\begin{align*}
 \dim_{D(X)}\im (\partial_{i+1}\otimes_{\Z G} \mathrm{id}_{(D(X))^n})
 & \geqslant \dim_{D(X)}\im (\partial'_{i+1}\otimes_{\Z G} \mathrm{id}_{(D(X))^n}) \\
 &= \dim_{D(X)} (B_i\otimes_{\Z G} (D(X))^n) \\
 &= n\cdot |\B_i|.
\end{align*}

The same argument with $i-1$ in place of $i$ shows that
\begin{align*}
    & \hspace{-10mm} \dim_{D(X)} \ker(\partial_i\otimes_{\Z G} \mathrm{id}_{(D(X))^n})\\
    &= \dim_{D(X)}C_i\otimes_{\Z G} (D(X))^n-\dim_{D(X)} \im(\partial_i\otimes_{\Z G} \mathrm{id}_{(D(X))^n})\\
    &\leqslant  n\cdot(|\A_i|+|\B_i|)-n\cdot |\B_{i-1}|\\
    &= n\cdot |\B_i|,
\end{align*}
where the last equality follows from $|\A_i|=|\B_{i-1}|$.

So
\begin{align*}
    b^{\sigma\otimes_{\Z} q}_i(C_{\ast})     &=\dim_{D(X)} \ker(\partial_i\otimes_{\Z G} \mathrm{id}_{(D(X))^n})-\dim_{D(X)}\im (\partial_{i+1}\otimes_{\Z G} \mathrm{id}_{(D(X))^n})\\
    &\leqslant 0.
\end{align*}

As the reverse inequality automatically holds, the desired result follows.
\end{proof}

\begin{corollary}\label{cor. fxz}
Suppose that $G$ is a (type $F$)-by-(infinite cyclic) group. Then $b^{\sigma\otimes_{\Z} q}_{\ast}(G)=0$.
\end{corollary}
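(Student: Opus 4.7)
The plan is to realize $G$ as the fundamental group of an aspherical finite CW-complex that is the mapping torus of a cellular self-map of a $K(N,1)$, and then invoke Proposition \ref{prop. mapping torus and vanishing betti number} with the dimension parameter taken as large as we please.

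First, since $G$ is (type $F$)-by-(infinite cyclic), there is a short exact sequence $1\to N\to G\to \Z\to 1$ with $N$ of type $F$. In particular, $G$ is finitely generated (both $N$ and $\Z$ are), so the hypothesis of Proposition \ref{prop. mapping torus and vanishing betti number} on $G$ is met. Pick a finite CW-complex $Y$ that is a $K(N,1)$, and let $t\in G$ project to a generator of $\Z$. Conjugation by $t$ defines an automorphism of $N$, which is realized by a self-homotopy-equivalence of $Y$; by cellular approximation, we may assume this map is cellular and write it as $f\colon Y\to Y$. The mapping torus $T_f$ is a finite CW-complex with $\pi_1(T_f)=G$, and it is aspherical: its universal cover is homotopy equivalent to $\widetilde Y\times \R$, which is contractible since $\widetilde Y$ is (by asphericity of $Y$). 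Consequently, the cellular chain complex $C_{\ast}$ of $\widetilde{T_f}$ is a free resolution of $\Z$ over $\Z G$, so
\[b^{\sigma\otimes_{\Z} q}_i(G)=b^{\sigma\otimes_{\Z} q}_i(C_{\ast})\]
for every $i$.

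Since $Y$ is a finite CW-complex, it coincides with its own $d$-skeleton for all $d\geqslant \dim Y$, hence $Y$ has finite $d$-skeleton for every $d\in \N$. Proposition \ref{prop. mapping torus and vanishing betti number} therefore yields $b^{\sigma\otimes_{\Z} q}_i(C_{\ast})=0$ for every $i\leqslant d$, and since $d$ is arbitrary this forces $b^{\sigma\otimes_{\Z} q}_{\ast}(C_{\ast})=0$, and with it $b^{\sigma\otimes_{\Z} q}_{\ast}(G)=0$, as desired.

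The main thing to take care with is the passage from the algebraic hypothesis (a short exact sequence with $N$ of type $F$) to the topological input needed by Proposition \ref{prop. mapping torus and vanishing betti number}, namely the presentation of $G$ as $\pi_1$ of an aspherical finite mapping torus. This rests on two standard facts: that every outer automorphism of $\pi_1$ of an aspherical CW-complex $Y$ is realised (up to homotopy) by a self-map of $Y$, and that the mapping torus of a self-homotopy-equivalence of an aspherical space is aspherical. Once these are in place, the corollary follows immediately by applying the proposition with $d$ as large as needed.
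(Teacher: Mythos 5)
Your argument is correct and is exactly the intended derivation: the paper states \cref{cor. fxz} as an immediate consequence of \cref{prop. mapping torus and vanishing betti number}, via realizing $G$ as the fundamental group of the aspherical finite mapping torus of a cellular self-homotopy-equivalence of a finite $K(N,1)$, so that the chain complex of the universal cover is a free resolution and the proposition applies for every $d$. Your write-up just makes explicit the standard facts (realization of the automorphism up to inner automorphism, asphericity of the mapping torus) that the paper leaves unstated.
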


\subsection{Agrarian torsion}
Suppose that $C_{\ast}$ is finite, that it comes with a preferred basis $\B_{C_{\ast}}$, and that $C_{\ast}$ is $\sigma$\textit{-acyclic}, i.e., $b^{\sigma}_i(C_{\ast})=0$ for all $i$, which implies that $C_\ast \otimes_{\Z G} D^n$ is contractible (see, e.g., \cite{rosenberg1994algebraic}*{Proposition 1.7.4}). Let $\gamma$ (resp. $d$) be a chain contraction (resp. the boundary map) of $C_{\ast}\otimes_{\Z G} D^n$. Then $d+\gamma \colon  C_{\mathrm{even}}\otimes_{\Z G} D^n \rightarrow C_{\mathrm{odd}}\otimes_{\Z G} D^n$ is an isomorphism of right $D$-modules, where $C_{\mathrm{even}}$ (resp. $C_{\mathrm{odd}}$) is the direct sum of the even (resp. odd) dimensional components of $C_{\ast}$. Tensoring the preferred basis of $C_{\ast}$ with the standard basis $V$ of $D^n$ gives rise to a preferred basis $\B_{\ast}=\B_{C_{\ast}}\otimes_{\Z G}V$ for $C_{\ast}\otimes_{\Z G} D^n$. Represent $d+\gamma$ by a matrix $M$ over $D$ using $\B_{\ast}$. The $\sigma$\textit{-agrarian torsion}, denoted by $\rho_{\sigma}(C_{\ast})$, is the Dieudonn\'{e} determinant $\Det_D M$. The value of $\rho_{\sigma}(C_{\ast})$ does not depend on the choice of $\gamma$ (see, e.g., \cite{cohen1973course}*{(15.3)}).

For computational purposes we record the following:

\begin{remark}\label{rm. change of basis}
Suppose that $\B'_{\ast}$ is another basis of $C_{\ast}\otimes_{\Z G} D^n$ and the change of basis matrix from $\B_{\ast}$ to $\B'_{\ast}$ has determinant $\pm 1$. Let $N$ be the matrix representative over $d+\gamma$ under the new basis $\B'_{\ast}$. Then $\Det_D N=\pm\Det_D M$.
\end{remark}

\begin{remark}\label{rm. computation}
Let $f\colon  A\rightarrow B$ be a homomorphism between based finite-rank free right $\Z G$-modules and suppose the matrix representative of $f$ under the chosen bases is $M$ (so $f$ coincides with left-multiplication by $M$). Tensoring these bases with the standard basis for $D^n$ yields bases for $A\otimes_{\Z G} D^n$ and $B\otimes_{\Z G} D^n$. The matrix representative of $f\otimes_{\Z G} \mathrm{id}_D$ under these bases is given by $\sigma(M)$ (since $A\otimes_{\Z G} D^n$ and $B\otimes_{\Z G} D^n$ are right $D$-modules, $f\otimes_{\Z G} \mathrm{id}_D$ is given by left-multiplication by $\sigma(M)$). Here, $\sigma(M)$ is the matrix obtained by applying $\sigma$ to each entry of $M$.
\end{remark}

\subsection{Agrarian polytope and agrarian norm}
Now further suppose that $G$ is finitely generated. Recall that there is a natural map $q \colon  G\twoheadrightarrow G_{\mathrm{fab}}$ of $G$ onto its maximal free abelianization $G_{\mathrm{fab}}$. Let $\sigma\colon G\rightarrow \GL_n(D)$ be a representation over a skew field $D$. Suppose that $C_{\ast}$ is $(\sigma\otimes_{\Z} q)$-acyclic. Then the $\sigma$\textit{-agrarian polytope} with respect to $C_{\ast}$ is $P(\rho_{\sigma\otimes_{\Z} q}(C_{\ast}))$, where
\[P\colon (\ore(DG_{\mathrm{fab}}))^{\times}_{\mathrm{ab}}\rightarrow \p(G_{\mathrm{fab}})\]
is the polytope homomorphism defined in Section \ref{sec. rational function}. Let $\phi\in H^1(G,\Z)$ be a character. The $\sigma$\textit{-agrarian norm} of $\phi$ is defined as
\[\|\phi\|_{\sigma,C_{\ast}}=\sup\big\{\phi(z)\mid z\in P(\rho_{\sigma\otimes_{\Z} q}(C_{\ast}))\big\}-\inf\big\{\phi(z)\mid z\in P(\rho_{\sigma\otimes_{\Z} q}(C_{\ast}))\big\}.\]
We will prove in Section \ref{sec. aspherical group} that the agrarian norm is a semi-norm in many interesting cases, justifying the terminology.

A second way of defining $\|\cdot\|_{\sigma,C_{\ast}}$, which is computationally easier, is the following. First find a character $\psi\in H^1(G,\Z)$ such that $\phi=k\psi$ for some $k\in\mathbb{N}$ and $\psi$ is a primitive integral character. Choose a basis $X$ of $G_{\mathrm{fab}}$ such that there is an $x\in X$ with $\psi(x)=1$ and $\psi(y)=0$ for all $y\in X\smallsetminus\{x\}$. Define
\[\|\phi\|_{\sigma,C_{\ast}}=k\cdot\deg_x \rho_{\sigma\otimes_{\Z} q}(C_{\ast}),\]
where $\deg_x$ is computed with respect to the basis $X$.

\begin{remark}\label{rm. equivalent rationalization}
In the case $n=1$, \cite{henneke2018agrarian} provides an alternative definition for the agrarian polytope, which we call the \textit{HK-polytope} for $\sigma$ for the moment. The HK-polytope for $\sigma$ is defined using $\ts\colon G \rightarrow \ore(D \ast G_{\mathrm{fab}})$, the HK-rationalization of $\sigma$, where $D \ast G_{\mathrm{fab}}$ denotes the twisted group ring in Section \ref{sec. rationalization}. We sketch the construction here and refer the reader to \cite{henneke2018agrarian} for details. The HK-polytope is defined only when $b^{\ts}_i(C_{\ast})=0$ for all $i$, so let us assume this is indeed the case. Generalizing Section \ref{sec. rational function}, one can define a polytope homomorphism $\widetilde P\colon \ore(D \ast G_{\mathrm{fab}})^{\times}_{\mathrm{ab}}\rightarrow \mathcal{P}(G_{\mathrm{fab}})$. Then the HK-polytope for $\sigma$ is $\widetilde P(\rho_{\ts}(C_{\ast}))$, where $\rho_{\ts}$ denotes the $\ts$-agrarian torsion.

The isomorphism $\alpha\colon \ore(D \ast G_{\mathrm{fab}})\rightarrow \ore(DG_{\mathrm{fab}})$ provided by Lemma \ref{lem. equivalent rationalization} implies that $b^{\ts}_i(C_{\ast})=0$ if and only if $b^{\sigma\otimes_{\Z}q}_i(C_{\ast})=0$. So the HK-polytope for $\sigma$ is well-defined if and only if our $\sigma$-agrarian polytope is well-defined. Moreover, let $x\in D \ast G_{\mathrm{fab}}$ and consider $\alpha(x)$. View $x$ and $\alpha(x)$ as functions $x,\alpha(x)\colon G_{\mathrm{fab}}\rightarrow D$. Then $\supp(x)=\supp(\alpha(x))$. It follows that HK-polytope coincides with our agrarian polytope. The benefit of our approach is that it only uses the untwisted group ring, which is computationally simpler.
\end{remark}

Before giving examples of the agrarian norm we would like to first prove its homotopy invariance. The proof of the following proposition combines ideas of \cite{henneke2018agrarian} and \cite{kielak2018bieri}.

\begin{proposition}[Homotopy invariance]\label{prop. homotopy invariance}
Let $C_{\ast},C'_{\ast}$ be homotopy equivalent finite based chain complexes of free  $\Z G$-modules. Suppose that $C_{\ast}$ is $(\sigma\otimes_{\Z} q)$-acyclic. Then so is $C'_{\ast}$ and there is an equality between the agrarian polytopes
\[P(\rho_{\sigma\otimes_{\Z}q}(C_{\ast}))=P(\rho_{\sigma\otimes_{\Z}q}(C'_{\ast})).\]
In particular, there is an equality between the corresponding agrarian norms
\[\|\cdot\|_{\sigma,C_{\ast}}=\|\cdot\|_{\sigma,C'_{\ast}}.\]
\end{proposition}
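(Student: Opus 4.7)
The plan is to use the standard mapping cone technique for proving homotopy-invariance of torsion-type invariants. First, I would check that $C'_{\ast}$ is also $(\sigma \otimes_{\Z} q)$-acyclic. This is immediate: tensoring with the left $\Z G$-module $(\ore(DG_{\mathrm{fab}}))^n$ (acted upon via $\sigma \otimes_{\Z} q$) is an additive functor that preserves chain homotopies, so it carries the homotopy equivalence $C_{\ast} \simeq C'_{\ast}$ to a homotopy equivalence of right $\ore(DG_{\mathrm{fab}})$-complexes and, in particular, preserves homology. Thus acyclicity of $C_{\ast}$ transfers to $C'_{\ast}$.

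Next, to compare the polytopes, I would fix a chain homotopy equivalence $f \colon C_{\ast} \to C'_{\ast}$ and form its mapping cone $C(f)$, a finite based free $\Z G$-complex (with basis inherited by concatenation from those of $C_{\ast}$ and $C'_{\ast}$) that is contractible over $\Z G$ since $f$ is a homotopy equivalence. The short exact sequence $0 \to C'_{\ast} \to C(f) \to \Sigma C_{\ast} \to 0$ of based complexes, together with multiplicativity of the Dieudonn\'e determinant across short exact sequences of based acyclic complexes (cf.\ \cite{cohen1973course}), yields
\[\rho_{\sigma \otimes_{\Z} q}(C(f)) = \rho_{\sigma \otimes_{\Z} q}(C'_{\ast}) \cdot \rho_{\sigma \otimes_{\Z} q}(C_{\ast})^{-1}\]
in $(\ore(DG_{\mathrm{fab}}))^{\times}_{\mathrm{ab}}$, up to a sign that is immaterial to the polytope. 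Applying the polytope homomorphism $P$, the desired equality of agrarian polytopes reduces to showing that $P(\rho_{\sigma \otimes_{\Z} q}(C(f)))$ is trivial in $\p(G_{\mathrm{fab}})$.

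The hard part is this last vanishing. Since $C(f)$ is contractible over $\Z G$, one may choose a chain contraction $\gamma$ defined over $\Z G$ itself, so that $\partial + \gamma \colon C(f)_{\mathrm{even}} \to C(f)_{\mathrm{odd}}$ is represented in the preferred basis by an invertible matrix $M \in \GL(\Z G)$; its class $[M] \in K_1(\Z G)$ is the Whitehead torsion of $f$, and $\rho_{\sigma \otimes_{\Z} q}(C(f)) = \Det_{\ore(DG_{\mathrm{fab}})}((\sigma \otimes_{\Z} q)(M))$. My plan is to reduce $M$ by elementary $\Z G$-row and -column operations — which leave the Dieudonn\'e determinant unchanged up to commutators — to a diagonal matrix whose diagonal entries lie in $\pm G$. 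The image under $\sigma \otimes_{\Z} q$ of such a diagonal entry $\epsilon g$ is the matrix $\epsilon \, \sigma(g) \, q(g)$, whose Dieudonn\'e determinant is $\pm \Det_D(\sigma(g)) \cdot q(g)^n$, an element whose Newton polytope is a single point of $G_{\mathrm{fab}}$. Summing across the diagonal blocks, $P(\rho_{\sigma \otimes_{\Z} q}(C(f)))$ is a single-point polytope, which acts as a translation on $\p(G_{\mathrm{fab}})$; up to this translation — the intrinsic ambiguity of Newton polytopes, which is already absorbed into the definition of the agrarian norm — the two polytopes coincide. The equality of agrarian norms then follows at once, since $\sup\phi - \inf\phi$ is manifestly translation-invariant.
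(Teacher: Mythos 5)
Your first half follows the paper exactly: acyclicity transfers because tensoring preserves chain homotopies, and the mapping cone together with the multiplicativity of torsion from \cite{cohen1973course}*{(17.2)} reduces everything to showing that the polytope of $\rho_{\sigma\otimes_{\Z}q}(\cone_{\ast}(f))=\Det_{\ore(DG_{\mathrm{fab}})}\bigl((\sigma\otimes_{\Z}q)(M)^{\pm 1}\bigr)$ contributes nothing, where $M$ is the invertible $\Z G$-matrix representing the Whitehead torsion of the cone. The gap is in how you dispose of this last term. You assert that $M$ can be reduced by elementary $\Z G$-row and column operations to a diagonal matrix with diagonal entries in $\pm G$. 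That assertion is exactly the statement that the class of $M$ in $K_1(\Z G)$ is represented by a trivial unit, i.e.\ (stably) that the Whitehead group of $G$ vanishes -- and there are unstable issues on top of that, since you only allow elementary operations on a matrix of fixed size. No such hypothesis is available: the proposition is stated for an arbitrary finitely generated group $G$ admitting a representation over a skew field, and $K_1(\Z G)$ is in general not generated by $\pm G$. Even in the settings where the proposition is later applied (free-by-cyclic and $3$-manifold groups), vanishing of the Whitehead group is a deep theorem of Waldhausen type, which you neither invoke nor may simply assume; so as written this step fails.

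The paper avoids all $K$-theory. Since $\rho(\cone_{\ast}(f))$ is invertible over $\Z G$, both $(\sigma\otimes_{\Z}q)\bigl(\rho(\cone_{\ast}(f))\bigr)$ and $(\sigma\otimes_{\Z}q)\bigl(\rho(\cone_{\ast}(f))^{-1}\bigr)$ are square matrices over $DG_{\mathrm{fab}}$, so Theorem \ref{thm. single polytope} applies to each and shows that the polytopes of their Dieudonn\'e determinants are \emph{single} polytopes; since these two elements of $\p(G_{\mathrm{fab}})$ sum to $P(\mathrm{Id})=0$, the correction term is forced to be trivial, and the polytope identity follows. This is the one genuinely new ingredient of the proof, and it is the piece your argument is missing; if you replace your diagonalization step by this application of Theorem \ref{thm. single polytope} to the matrix and its inverse, the rest of your outline goes through. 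Note also that even granting your diagonalization, your endgame only delivers the polytopes up to a translation by a lattice point, which is weaker than the equality asserted in the statement; the single-polytope argument is precisely what the paper uses to pin down the correction term rather than merely absorb it into the norm.
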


\begin{proof}
Let $f\colon  C_{\ast}\rightarrow C'_{\ast}$ be a (chain) homotopy equivalence. Then $f\otimes_{\Z G} \mathrm{id}_{(\ore(DG_{\mathrm{fab}}))^n}$ is a homotopy equivalence between $C_{\ast}\otimes_{\Z G} (\ore(DG_{\mathrm{fab}}))^n$ and $C'_{\ast}\otimes_{\Z G} (\ore(DG_{\mathrm{fab}}))^n$. Thus, ${C'_{\ast}\otimes_{\Z G} (\ore(DG_{\mathrm{fab}}))^n}$ is acyclic. 

Consider the mapping cone $\cone_{\ast}(f)$ with basis the union of bases of $C_{\ast}$ and $C'_{\ast}$. Since $f$ is a homotopy equivalence, $\cone_{\ast}(f)$ is contractible and hence its Whitehead torsion $\rho(\cone_{\ast}(f))$ is defined. Moreover, $\cone_{\ast}(f)\otimes_{\Z G} (\ore(DG_{\mathrm{fab}}))^n$ is also contractible and has (see \cref{rm. computation}) 
\[\rho_{\sigma\otimes_{\Z}q}(\cone_{\ast}(f))=\Det_{\ore(DG_{\mathrm{fab}})}\left((\sigma\otimes_{\Z}q)\left(\rho\left(\cone_{\ast}(f)\right)\right)^{-1}\right).\]

There is a short exact sequence
\[0\rightarrow C'_{\ast}\rightarrow \cone_{\ast}(f) \rightarrow \Sigma C_{\ast} \rightarrow 0,\]
where $\Sigma C_{\ast}$ is the suspension of $C_{\ast}$. Since \[\cone_{\ast}(f\otimes_{\Z G} \mathrm{id}_{(\ore(DG_{\mathrm{fab}}))^n})=\cone_{\ast}(f)\otimes_{\Z G} (\ore(DG_{\mathrm{fab}}))^n\]
and 
\[\Sigma(C_{\ast}\otimes_{\Z G} (\ore(DG_{\mathrm{fab}}))^n)=\Sigma C_{\ast} \otimes_{\Z G} (\ore(DG_{\mathrm{fab}}))^n,\]
the above short exact sequence is still exact after tensoring with $(\ore(DG_{\mathrm{fab}}))^n$. Now, \cite{cohen1973course}*{(17.2)} (thinking of these modules as $(D,\{\pm 1\})$-modules in the sense of \cite{cohen1973course}) yields
\[\rho_{\sigma\otimes_{\Z}q}(C'_{\ast})\cdot (\rho_{\sigma\otimes_{\Z}q}(C_{\ast}))^{-1}=\rho_{\sigma\otimes_{\Z}q}(\cone_{\ast}(f))=\Det_{\ore(DG_{\mathrm{fab}})}\left((\sigma\otimes_{\Z}q)\left(\rho\left(\cone_{\ast}(f)\right)\right)^{-1}\right),\]
and thus 
\begin{equation*}
    P(\rho_{\sigma\otimes_{\Z}q}(C'_{\ast}))-P(\rho_{\sigma\otimes_{\Z}q}(C_{\ast}))=P\left(\Det_{\ore(DG_{\mathrm{fab}})}\left((\sigma\otimes_{\Z}q)\left(\rho\left(\cone_{\ast}(f)\right)\right)^{-1}\right)\right).
\end{equation*}
Since $(\sigma\otimes_{\Z}q)(\rho(\cone_{\ast}(f)))$ is a matrix over $DG_{\mathrm{fab}}$, $P((\sigma\otimes_{\Z}q)(\rho(\cone_{\ast}(f))))$ is a single polytope by \cref{thm. single polytope}. Since $\rho(\cone_{\ast}(f))$ is invertible over $\Z G$, ${(\sigma\otimes_{\Z}q)((\rho(\cone_{\ast}(f)))^{-1})}$ is well-defined and is a matrix over $DG_{\mathrm{fab}}$. \cref{thm. single polytope} then implies that 
\[P\left((\sigma\otimes_{\Z}q)\left(\left(\rho\left(\cone_{\ast}(f)\right)\right)^{-1}\right)\right)\]
is also a single polytope. We have
\[P((\sigma\otimes_{\Z}q)(\rho(\cone_{\ast}(f))))+P\left((\sigma\otimes_{\Z}q)\left(\left(\rho\left(\cone_{\ast}(f)\right)\right)^{-1}\right)\right)=P(\mathrm{Id})=0,\]
and so $P((\sigma\otimes_{\Z}q)(\rho(\cone_{\ast}(f))))=-P((\sigma\otimes_{\Z}q)((\rho(\cone_{\ast}(f)))^{-1}))$. But since both are single polytopes, $P((\sigma\otimes_{\Z}q)(\rho(\cone_{\ast}(f))))=P((\sigma\otimes_{\Z}q)((\rho(\cone_{\ast}(f)))^{-1}))=0$.
\end{proof}

Suppose that $G$ is of type $F$ and is $(\sigma\otimes_{\Z} q)$-acyclic. Let $C_{\ast},C'_{\ast}$ be two finite type based free resolutions of $\Z$ over $\Z G$. The above proposition then implies that $\|\cdot\|_{\sigma,C_{\ast}}=\|\cdot\|_{\sigma,C'_{\ast}}$ and thus the agrarian norm does not depend on the choice of resolution. In this case, we will simply denote $\|\cdot\|_{\sigma,C_{\ast}}$ by $\|\cdot\|_{\sigma}$ and call it the $\sigma$\textit{-agrarian norm} of $G$.

\begin{example}[Thurston norm]\label{eg. thurston norm}
Suppose that $G$ is locally indicable and (type $F$)-by-(infinite cyclic). Let $\tau\colon  \C G\hookrightarrow \D_G$ be the embedding of $\C G$ into the Linnell skew field. Then the \textit{Thurston norm}, denoted by $\|\cdot\|_T$, is the $\tau$-agrarian norm $\|\cdot\|_{\tau}$. If $G$ is the fundamental group of a closed $3$-manifold $M$ that fibers over the circle $S^1$, then $\|\cdot\|_T$ is exactly the classical Thurston semi-norm of $M$ \cites{friedl2017universal,liu2017degree}. Let $P(\rho_{\tau\otimes_{\Z}q})$ be the corresponding agrarian polytope. Then $2P(\rho_{\tau\otimes_{\Z}q})$ is the dual Thurston polytope \cite{friedl2017universal}*{Theorem 3.35}.
\end{example}

\begin{example}[Twisted Alexander norm]\label{eg. alexander norm}
Suppose that $G$ is (type $F$)-by-(infinite cyclic) and $\sigma\colon G\rightarrow \GL_n(\C)$ is a complex representation. The \textit{twisted Alexander norm with respect to $\sigma$} is the agrarian norm $\|\cdot\|_{\sigma}$. If $\sigma$ is the trivial representation $G\rightarrow \GL_1(\C)$ that sends every $g\in G$ to $1$, then $\|\cdot\|_{\sigma}$ is called the \textit{(untwisted) Alexander norm}.
\end{example}

\section{Twisted \texorpdfstring{$\ell^2$}{l2}-Betti numbers}\label{sec. twisted l2 betti}

This section is devoted to the proof of the following result.

\begin{theorem}\label{thm. l2 betti fibration}
Let $F\rightarrow E \rightarrow B$ be a fibration of connected finite CW-complexes, or more generally, topological spaces that are homotopy equivalent to connected finite CW-complexes. Suppose that $\pi_1(B)$ is virtually locally indicable. If $F$ is simply connected, or more generally, if the map $\pi_1(E)\rightarrow\pi_1(B)$ induced by the fibration is an isomorphism, then:
\begin{enumerate}[label=(\roman*)]
    \item\label{item. l2 betti 1} For all $i \in \mathbb N$ we have $b^{(2)}_i(E)\leqslant \sum^i_{j=0}b_j(F)\cdot b^{(2)}_{i-j}(B)$.
    \item\label{item. l2 betti 2} If the homology of $F$ with $\C$-coefficients is non-zero in at most two degrees, $0$ and $n$ with $n\geqslant \max \{2, \dim B \}$, then for every $i \in \mathbb N$ we have
    \[b^{(2)}_i(E)=b^{(2)}_i(B)+b_n(F)\cdot b^{(2)}_{i-n}(B).\]
    \item\label{item. l2 betti 3} If $B$ is a closed aspherical manifold of odd dimension and satisfies the Singer Conjecture, then
    \[b^{(2)}_i(E)=0.\]
    
    \item\label{item. l2 betti 4} If $B$ is a closed aspherical manifold with $\dim B=2n$ that satisfies the Singer Conjecture, then
    \[b^{(2)}_i(E)=b_{i-n}(F) \cdot b^{(2)}_n(B).\]
    If in addition $B$ is a closed negatively curved Riemannian manifold, then for every $i$ such that $b_i(F)>0$, we have
    \[b^{(2)}_{n+i}(E)>0.\]
    In particular, if $F$ is a closed manifold, then
    \[b^{(2)}_{n+\dim F}(E)>0.\]
\end{enumerate}
\end{theorem}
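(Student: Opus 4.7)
The plan is to apply Lück's $\ell^2$-Leray--Serre spectral sequence to the fibration, identify its $E^2$-page via the paper's theorem on twisted $\ell^2$-Betti numbers of locally indicable groups, and then argue degeneration in cases (ii)--(iv).

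\textbf{First I would reduce to the locally indicable case.} Choose a finite cover $\overline B \to B$ of degree $d$ with $\pi_1(\overline B)$ locally indicable and pull back the fibration to $\overline F \to \overline E \to \overline B$. The fiber is unchanged up to homotopy, and $\ell^2$-Betti numbers multiply by $d$ under finite covers, so it suffices to prove each statement on the cover. Next I would invoke the $\ell^2$-Leray--Serre spectral sequence (whose existence in this generality is due to L\"uck and is cited in the introduction): because the induced map on $\pi_1$ is an isomorphism, the monodromy gives $H_q(F; \C)$ the structure of a finite-dimensional $\C\pi_1(\overline B)$-module, and the spectral sequence converging to $H^{(2)}_\ast(\overline E)$ has
\[
E^2_{p,q} \;=\; H^{(2)}_p\bigl(\overline B;\, H_q(F;\C)\bigr),
\]
the twisted $\ell^2$-homology of $\overline B$. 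The paper's twisted-equals-untwisted-times-dimension theorem for locally indicable groups then gives
\[
\dim_{\mathcal N(\pi_1(\overline B))} E^2_{p,q} \;=\; b_q(F)\cdot b^{(2)}_p(\overline B).
\]
Since $E^\infty_{p,q}$ is a subquotient of $E^2_{p,q}$ and von Neumann dimension is additive under subquotients, convergence yields $b^{(2)}_i(\overline E) \leq \sum_{p+q=i} b_q(F)\cdot b^{(2)}_p(\overline B)$; dividing by $d$ produces item (i).

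\textbf{For item (ii)} I would show that the spectral sequence collapses at $E^2$. Only the rows $q=0$ and $q=n$ contribute. A homology differential $d_r\colon E^r_{p,q} \to E^r_{p-r,\,q+r-1}$ can connect these two rows only when $r=n+1$; but then its target has first index $p - n - 1$, and since $p \leq \dim B \leq n$ this is strictly negative, so the target vanishes. Hence $E^\infty = E^2$ and the inequality becomes the claimed equality. \textbf{Items (iii) and (iv)} follow by combining the Singer Conjecture with the same spectral sequence. In the odd-dimensional case Singer kills every $b^{(2)}_p(B)$, and (i) immediately gives (iii). In the $2n$-dimensional case only the column $p=n$ of $E^2$ survives, so all higher differentials (which change $p$ by at least $2$) have vanishing source or target; this yields $b^{(2)}_i(E) = b_{i-n}(F)\cdot b^{(2)}_n(B)$. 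The positivity claim in the negatively curved setting uses the strict inequality $b^{(2)}_n(B) > 0$ from the negative-curvature half of Singer, combined with Poincar\'e duality giving $b_{\dim F}(F) = b_0(F) \geq 1$ when $F$ is a closed (orientable) manifold.

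\textbf{The main obstacle} is the rigorous use of the $\ell^2$-Leray--Serre spectral sequence: one must ensure the spectral sequence exists with von Neumann dimensions additive along its filtration, and that its $E^2$-page genuinely computes the twisted $\ell^2$-homology to which the paper's key theorem applies. This is L\"uck's setup, so invoking it is standard, but one has to verify that the hypothesis of finite (or homotopy-finite) CW-complexes, together with $\pi_1(E)\xrightarrow{\sim}\pi_1(B)$, puts us in the appropriate framework. The remaining bookkeeping -- collapse arguments and the finite-cover reduction -- is then purely mechanical.
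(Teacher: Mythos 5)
Your proposal follows essentially the same route as the paper: pass to a finite cover to reach the locally indicable case, apply the Leray--Serre spectral sequence with $\ell^2$-coefficients twisted by the monodromy action on $H_q(F;\C)$, identify the $E^2$-page as $b_q(F)\cdot b^{(2)}_p(B)$ via the twisted-equals-$n$-times-untwisted theorem, and then argue degeneration in cases (ii)--(iv); the only cosmetic difference is that the paper works with dimensions over the Linnell skew field $\D_{\pi_1(B)}$ (making subquotient additivity immediate) rather than von Neumann dimension. Your explicit differential-vanishing argument for the collapse in (ii) and (iv) is correct and in fact spells out what the paper leaves implicit.
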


We prove  this theorem  by  giving an affirmative answer to the following Question \ref{q. luck} due to L\"{u}ck for locally indicable groups.

\begin{question}[L\"{u}ck]\label{q. luck}
Let $G$ be a group, $\sigma\colon G\rightarrow \GL_n(\C)$ a complex representation of $G$, and $C_{\ast}$ a chain complex of $\Z G$-modules. Is it true that
\[b^{(2),\sigma}_i(C_{\ast})=n\cdot b^{(2)}_i(C_{\ast})\]
for all $i$?
\end{question}

To answer Question \ref{q. luck}, we interpret (twisted) $\ell^2$-Betti numbers as special cases of agrarian Betti numbers and then we extend every complex representation of a locally indicable group $G$ to a representation of $\D_G$. For the rest of this section, let $G$ be a locally indicable group, $\D_G$ the Linnell skew field of $G$, $\tau\colon \C G\hookrightarrow \D_G$ the natural inclusion, and $\sigma\colon G \rightarrow \mathrm{GL}_n(\C)$ a complex representation. By Example \ref{eg. l2 betti}, the (twisted) $\ell^2$-Betti numbers are special cases of agrarian Betti numbers:
\begin{align*}
b^{(2)}_i(C_{\ast}) &=\dim_{\D_G} H_i(C_{\ast}\otimes_{\Z G} \D_G)=b^{\tau}_i(C_{\ast}),\\
b^{(2),\sigma}_i(C_{\ast})& =\dim_{\D_G} H_i(C_{\ast}\otimes_{\Z G} \D_G^n)=b^{\sigma\otimes_{\C} \tau}_i(C_{\ast}) 
\end{align*}
for all $i$. So L\"{u}ck's question will be answered if we can relate $b^{\sigma\otimes_{\C} \tau}_i(C_{\ast})$ to $b^{\tau}_i(C_{\ast})$.

\begin{theorem}\label{thm. extension}
Let $G$ be a locally indicable group, $\tau\colon G\rightarrow \D_G$ the natural map of $G$ into its Linnell skew field, and $\sigma\colon G\rightarrow \GL_n(\C)$ a finite-dimensional complex representation. Then $\sigma\otimes_{\C} \tau$ extends to a ring homomorphism $\ts\colon \D_G\rightarrow M_n(\D_G)$.
\end{theorem}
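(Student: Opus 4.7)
The plan is to exploit Hughes' uniqueness theorem for Hughes-free $\C G$-fields. I let $\widetilde\D_G \subset M_n(\D_G)$ be the division closure of $(\sigma\otimes_\C\tau)(\C G)$, as in Section~\ref{sec. complexity}, and aim to show that $\widetilde\D_G$ is itself a skew field and a Hughes-free $\C G$-field under $\sigma\otimes_\C\tau$. Then, by Theorem~\ref{thm. hughes} together with Example~\ref{eg. hughes free}, there will be a unique ring isomorphism $\widetilde\sigma\colon \D_G \to \widetilde\D_G$ with $\widetilde\sigma\circ\tau = \sigma\otimes_\C\tau$; composing with the inclusion $\widetilde\D_G \hookrightarrow M_n(\D_G)$ produces the desired extension.

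The main technical step is to verify that $\widetilde\D_G$ is a skew field, i.e., that every non-zero $x \in \widetilde\D_G$ is invertible in $\widetilde\D_G$. I would proceed by induction on the tree complexity $\tree_G(x)$ from Section~\ref{sec. complexity}, nested with an induction on the source subgroup (which, by Theorem~\ref{thm. source} and local indicability of $G$, may be taken finitely generated and therefore split as $H = N \rtimes \langle t\rangle$ for some normal $N$). Given $0 \neq x \in \widetilde\D_G$, Proposition~\ref{prop. laurent and complexity} expands $x = \sum_i x_i s^i$ inside $\widetilde\D_N(\!(s)\!)$, with $s = \sigma(t)\tau(t)$; either $x$ is a single monomial $x_i s^i$, in which case we recurse to $x_i \in \widetilde\D_N$ in a strictly smaller subgroup, or the lowest non-zero $x_i$ has complexity strictly smaller than $\tree_G(x)$ and is therefore already invertible in $\widetilde\D_G$ by the inductive hypothesis. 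In both cases $x$ becomes invertible in $\widetilde\D_N(\!(s)\!)$. Combining the containment~\eqref{eq. subring} with the embedding $\D_H \hookrightarrow \D_N(\!(t)\!)$ gives $\widetilde\D_G \subset M_n(\D_N)(\!(s)\!) = M_n(\D_N(\!(t)\!))$, so the inverse $x^{-1}$ computed in the Laurent series ring in fact lies in $M_n(\D_G)$, and by division-closedness $x^{-1} \in \widetilde\D_G$, as required.

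Hughes-freeness of $\widetilde\D_G$ then transfers from that of $\D_G$: any putative relation $\sum_i y_i\,(\sigma\otimes_\C\tau)(h_i) = 0$ with $y_i \in \widetilde\D_N$ and $h_1,\dots,h_k$ in distinct $N$-cosets of some $H = N\rtimes\langle t\rangle \leqslant G$ sits inside $M_n(\D_H)$ by~\eqref{eq. subring}, and a coordinate-wise comparison reduces it to $\D_N$-linear relations among the $\tau(h_i)$ in $\D_H$, which must be trivial because $\D_G$, and hence $\D_H$, is Hughes-free by Example~\ref{eg. hughes free}. Hughes' theorem (Theorem~\ref{thm. hughes}) then produces the isomorphism $\widetilde\sigma$ and concludes the proof. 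The principal obstacle I anticipate is the nested induction underlying the skew-field property: one has to organise it so that when complexity fails to strictly decrease (the monomial case), one nevertheless has recourse to a strictly smaller ambient subgroup, and one must verify carefully that the inverse produced by the Laurent-series argument is captured by the division closure inside $M_n(\D_G)$ rather than escaping into the ambient Laurent-series ring.
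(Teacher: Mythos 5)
Your overall strategy coincides with the paper's: take $\widetilde\D_G$, the division closure of $(\sigma\otimes_{\C}\tau)(\C G)$ in $M_n(\D_G)$, prove by induction on the complexity $\tree_G$ that it is a skew field, check that it is a Hughes-free $\C G$-field, and conclude via Theorem \ref{thm. hughes} and Example \ref{eg. hughes free}. Your Hughes-freeness transfer and the final appeal to Hughes' theorem are fine and essentially identical to the paper's (the paper phrases directness in terms of powers of $t'$, you in terms of coset representatives; both reduce, entry by entry, to Hughes-freeness of $\D_G$, using $\sigma(h_i)\in \GL_n(\C)\subseteq M_n(\D_N)$).

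The genuine gap is your treatment of the monomial case in the skew-field induction. You propose, when $x=x_is^i$, to ``recurse to $x_i\in\widetilde\D_N$ in a strictly smaller subgroup,'' i.e.\ a nested induction on the source subgroup; but this recursion has no well-founded measure. The complexity does not drop in the monomial case (by Proposition \ref{prop. laurent and complexity}, equality $\tree_H(x_i)=\tree_H(x)$ holds precisely then), $N$ need not be finitely generated, and when you pass to $x_i$ and pick a new primitive element realizing $\tree_G(x_i)$, its source subgroup need not be contained in $N$ or in $H$, so you do not even obtain a descending chain of subgroups, let alone a terminating one. The paper closes exactly this point differently: choosing $\alpha$ primitive and realizing $\tree_G(x)$, with $H$ the image in $G$ of $\mathrm{source}(\alpha)$, the monomial case simply cannot occur, since $x=x_is^i$ would force $\alpha\in\rat(\C^{\times}N)t^i$, hence $\mathrm{source}(\alpha)\leqslant \C^{\times}N$ and $H\leqslant N$ by Theorem \ref{thm. source}, a contradiction; so all coefficients $x_i$ have strictly smaller complexity and the induction runs purely over the well-ordered set $\mathcal{T}$. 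Two smaller slips: the containment you need is $x\in\widetilde\D_H\subset M_n(\D_H)\subset M_n(\D_N(\!(t)\!))$ coming from \eqref{eq. subring}, not $\widetilde\D_G\subset M_n(\D_N)(\!(s)\!)$, which is false in general (one then argues that a matrix over the skew field $\D_H$ invertible in the larger ring is already invertible in $M_n(\D_H)\subseteq M_n(\D_G)$, whence in $\widetilde\D_G$ by division-closedness); and the inequality chain $\tree_G(x_i)\leqslant\tree_H(x_i)<\tree_H(x)=\tree_G(x)$ requires the equality $\tree_H(x)=\tree_G(x)$, which again uses that the realizing $\alpha$ lies in $\rat(\C^{\times}H)$ --- this should be said explicitly, as in inequality \eqref{eq. complexity} of the paper.
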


\begin{proof}
For all $H\leqslant G$, let $\widetilde \D_H$ be the division closure of $(\sigma\otimes_{\C}\tau)(\C H)$ in $M_n(\D_G)$. Let $\rat(\C^{\times}G)$ be the universal rational $\C^{\times}G$-semiring. Section \ref{sec. complexity} gives us a map
\[\Phi\colon \rat(\C^{\times}G)\cup\{0\}\rightarrow \widetilde\D_G.\]
By Remark \ref{rm. complexity} and Lemma \ref{lem. complexity wd}, $\Phi(\rat(\C^{\times}H)\cup\{0\})=\widetilde\D_H$, where we think of $\rat(\C^{\times}H)\cup\{0\}$ as a subset of $\rat(\C^{\times}G)\cup\{0\}$.

Let $\mathcal{T}$ be the set of finite rooted trees. Section \ref{sec. complexity} gives us the notion of $G$-complexity $\tree_G(x)\in \mathcal{T}$ of $x\in\widetilde\D_G$. We first prove that $\widetilde\D_G$ is a skew field by inducting on the $G$-complexity. Our proof uses the idea of the proof of \cite{jaikin2020strong}*{Theorem 6.1}.

Consider a non-zero element $x\in\widetilde\D_G$. If $\tree_G(x)=1_{\mathcal{T}}$, then $x\in(\sigma\otimes_{\C}\tau)(\C^{\times}G)$ is invertible. Now assume that $\tree_G(x)>1_{\mathcal{T}}$ and that for all $0\neq y\in\widetilde \D_G$ with $\tree_G(y)<\tree_G(x)$, $y$ is invertible in $\widetilde\D_G$. Take $\alpha\in\rat(\C^{\times}G)$ realizing the $G$-complexity of $x$. By Theorem \ref{thm. source} \ref{item. source 1} we may assume that $\alpha$ is primitive because multiplying by an element in $\C^{\times}G$ does not change the complexity nor the conclusion about the invertibility of $x$. Set $H$ to be image of $\mathrm{source}(\alpha)$ under the homomorphism $\C^{\times}G\rightarrow \C^{\times}G/\C^{\times}=G$. Then $\alpha\in \rat(\C^{\times}H)$ and so $x\in \Phi(\rat(\C^{\times}H))=\widetilde\D_H$. If $H=\{1\}$, then $\widetilde\D_H=\C\cdot \mathrm{Id}$ and since $x\neq 0$, it is invertible. If $H\neq\{1\}$, then by Theorem \ref{thm. source} \ref{item. source 1}, $H$ is finitely generated, and thus there exists a normal subgroup $N\lhd H$ and an element $t\in H$ of infinite order such that $H=N\rtimes \langle t \rangle$.

Consider the $H$-complexity $\tree_H$ given by Section \ref{sec. complexity}. Note that for all $0\neq y\in\widetilde \D_H$ with $\tree_H(y)<\tree_H(x)$, we have
\begin{equation}\label{eq. complexity}
    \tree_G(y)\leqslant\tree_H(y)<\tree_H(x)=\tree_G(x).
\end{equation}
Indeed, by definition we have $\tree_G(y)\leqslant \tree_H(y)$. Note that $\tree_G(x)=\tree(\alpha)$, where the latter is computed by thinking of $\alpha$ as an element of $\rat(\C^{\times}G)$. Note also that $\tree_H(x)\leqslant \tree(\alpha)$, where the latter is computed by thinking of $\alpha$ as an element of $\rat(\C^{\times}H)$. As pointed out by Remark \ref{rm. complexity}, the two ways of computing $\tree(\alpha)$ yield the same answer. Thus, we also have $\tree_H(x)\leqslant \tree_G(x)$. The induction hypothesis together with \eqref{eq. complexity} then says that  $y$ is invertible in $\widetilde\D_G$.

For simplicity, denote $\tau(t)$ by $t$ and $\sigma(t)\cdot \tau(t)$ by $s$. Let 
\[\D_N(\!(t)\!), \quad \widetilde\D_N(\!(s)\!), \quad M_n(\D_N)(\!(s)\!)\]
be the twisted Laurent power series rings given by Section \ref{sec. complexity}. By Proposition \ref{prop. laurent and complexity} we have $x\in\widetilde\D_N(\!(s)\!)$. So $x$ can be written as a Laurent power series $x=\sum_i x_is^i$ with $x_i\in \widetilde\D_N$. We claim that there are at least two non-zero summands in $\sum_i x_is^i$. Otherwise, we would have $\alpha\in\rat(\C^{\times}N)t^i$ for some $i$, and so 
$\mathrm{source}(\alpha)\subset \C^{\times}N$, and hence $H\leqslant N$, a contradiction.

Thus, Proposition \ref{prop. laurent and complexity} implies that $\tree_H(x_i)<\tree_H(x)$ for all $i$. Inequality \eqref{eq. complexity} implies that 
\[\tree_G(x_i)<\tree_G(x).\]
Thus, the induction hypothesis implies that if $x_i\neq 0$ then it is invertible in $\widetilde\D_G$, and thus in $M_n(\D_N)$, by \eqref{eq. subring}. So $x$ is invertible in $M_n(\D_N)(\!(s)\!)=M_n(\D_N(\!(t)\!))$. Note that $x$ belongs to the subring $M_n(\D_H)$. So $x$ is invertible in $M_n(\D_H)$, and thus in $M_n(\D_G)$. So $x$ is invertible in $\widetilde\D_G$.
Therefore, $\widetilde \D_G$ is a skew field.

We will now show that $\widetilde \D_G$ is a Hughes-free $\C G$-field. Let $H'\leqslant G$ be any non-trivial finitely generated subgroup and suppose $H'=N'\rtimes \langle t' \rangle$ for some normal subgroup $N'\lhd H'$ and an infinite-order element $t'\in H'$. Since $\D_G$ is a Hughes-free $\C G$-field, the sum 
\[M_n(\D_{N'})+M_n(\D_{N'})\cdot \tau(t')+\cdots+M_n(\D_{N'})\cdot \tau(t')^f\]
is direct for every $f\in\mathbb{N}^+$. The containment \eqref{eq. subring} then implies that the sum 
\[\widetilde \D_{N'}+\widetilde \D_{N'}\cdot \sigma(t')\tau(t')+\cdots+\widetilde \D_{N'}\cdot \sigma(t')^f\tau(t')^f\]
is also direct, and thus $\widetilde \D_G$ is a Hughes-free $\C G$-field. Theorem \ref{thm. hughes} then implies that there exists a ring homomorphism $\ts\colon \D_G\rightarrow M_n(\D_G)$ that extends $\sigma\otimes_{\C}\tau$.
\end{proof}

The map $\ts$ endows $\D_G^n = \oplus_n \D_G$ with a left $\D_G$-module structure by $c\bullet v=\ts(c)\cdot v$. Given any right $\D_G$-module $U$, the action $\bullet$ then induces a tensor product $U\otimes_{\D_G} \D_G^n$, $(u\cdot c)\otimes v=u\otimes(c\bullet v)=u\otimes(\ts(c)\cdot v)$ for all $u\in U,c\in\D_G,v\in \D^n_G$. Viewing $\D_G^n$ as a right $\D_G$-module with the obvious right $\D_G$-action, we see that $U\otimes_{\D_G} \D_G^n$ is then equipped with a structure of a right $\D_G$-module, from which we can calculate the $\D_G$-dimension. Note that
\begin{equation}\label{eq. dim}
    \dim_{\D_G} U\otimes_{\D_G} \D_G^n = n\cdot \dim_{\D_G} U.
\end{equation}
Here we adopt the convention $0\cdot \infty=0$ and $n\cdot \infty=\infty$ for all $n>0$.

\begin{theorem}\label{thm. twisted l2 betti}
Let $G$ be a locally indicable group, let $C_{\ast}$ be a $\Z G$-chain complex, and let $\sigma\colon G\rightarrow \GL_n(\C)$ be a linear representation of $G$. Then for all $i$ we have
\[b^{(2),\sigma}_i(C_{\ast})=n\cdot b^{(2)}_i(C_{\ast}).\]
\end{theorem}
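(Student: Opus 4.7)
The plan is to deduce the theorem as a formal consequence of Theorem \ref{thm. extension}, using exactness of tensor products over the skew field $\D_G$.

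First, I would rewrite the twisted chain complex as an iterated tensor product. Using the ring homomorphism $\ts\colon \D_G\to M_n(\D_G)$ from Theorem \ref{thm. extension}, the module $\D_G^n$ acquires a left $\D_G$-module structure via $c\bullet v=\ts(c)\cdot v$, and the action of $G$ on $\D_G^n$ given by $\sigma\otimes_\C\tau$ factors through $\tau$ followed by $\bullet$. Consequently, there is a natural isomorphism of chain complexes of right $\D_G$-modules
\[
C_\ast \otimes_{\Z G} \D_G^n \;\cong\; \bigl(C_\ast \otimes_{\Z G} \D_G\bigr) \otimes_{\D_G} \D_G^n,
\]
where on the right, $\D_G$ is regarded as a $(\Z G,\D_G)$-bimodule via $\tau$ on the left and the obvious right multiplication, and $\D_G^n$ carries the left $\D_G$-structure $\bullet$. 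This is the standard associativity-of-tensor-product identification; one just checks it on generators $c\otimes v\mapsto 1\otimes(c\bullet v)$ and verifies compatibility with the $\Z G$-balancing.

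Next, since $\D_G$ is a skew field, every $\D_G$-module is free, and in particular the left $\D_G$-module $\D_G^n$ (with action $\bullet$) is a free $\D_G$-module of some rank; hence the functor $(-)\otimes_{\D_G}\D_G^n$ is exact. Applied degreewise to the $\D_G$-chain complex $C_\ast\otimes_{\Z G}\D_G$, this exactness gives for each $i$ a natural isomorphism of right $\D_G$-modules
\[
H_i\bigl(C_\ast\otimes_{\Z G}\D_G^n\bigr) \;\cong\; H_i\bigl(C_\ast\otimes_{\Z G}\D_G\bigr)\otimes_{\D_G}\D_G^n.
\]

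Finally, applying the dimension formula \eqref{eq. dim} to $U=H_i(C_\ast\otimes_{\Z G}\D_G)$ yields
\[
b^{(2),\sigma}_i(C_\ast)=\dim_{\D_G}H_i(C_\ast\otimes_{\Z G}\D_G^n)=n\cdot\dim_{\D_G}H_i(C_\ast\otimes_{\Z G}\D_G)=n\cdot b^{(2)}_i(C_\ast),
\]
which is the claimed identity. The genuine work in this theorem is concentrated in Theorem \ref{thm. extension} — once the extension $\ts$ exists and turns $\D_G^n$ into a free $\D_G$-module, the rest is a formal exactness argument, and the only thing to watch is that one keeps track of left versus right module structures when asserting the associativity isomorphism and the exactness of $(-)\otimes_{\D_G}\D_G^n$.
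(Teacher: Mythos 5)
Your proposal is correct and is essentially the paper's own argument: identify $C_\ast\otimes_{\Z G}\D_G^n$ with $(C_\ast\otimes_{\Z G}\D_G)\otimes_{\D_G}\D_G^n$ using the extension $\ts$ from Theorem \ref{thm. extension}, use that $\D_G^n$ with the $\bullet$-action is a free left module over the skew field $\D_G$ so that tensoring commutes with homology, and conclude via \eqref{eq. dim}. Your extra care in checking that the $G$-action factors through $\tau$ followed by $\bullet$ and in tracking left/right structures is exactly the implicit content of the paper's ``Identify'' step.
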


\begin{proof}
Identify
\[C_{\ast}\otimes_{\Z G}\D_G^n\cong C_{\ast}\otimes_{\Z G} \D_G \otimes_{\D_G} \D_G^n.\]
As a left-module over the division ring $\D_G$ (with the action given by $\bullet$), $\D_G^n$ is free and thus for all $i$
\[H_i(C_{\ast}\otimes_{\Z G} \D_G \otimes_{\D_G} \D_G^n)\cong H_i(C_{\ast}\otimes_{\Z G} \D_G)\otimes_{\D_G} \D_G^n.\]
The desired result then follows from \eqref{eq. dim}.
\end{proof}

Question \ref{q. luck} arises naturally in the process of computing $\ell^2$-Betti numbers of fibrations. The following argument can be easily extracted from the proof of \cite{lueck2018twisting}*{Lemma 5.4}. We reproduce it here for the convenience of the reader. Let $F\rightarrow E\rightarrow B$ be a fibration of connected finite CW-complexes, or more generally, topological spaces that are homotopy equivalent to connected finite CW-complexes, such that $\pi_1(B)$ is locally indicable and the induced homomorphism $\pi_1(E)\rightarrow \pi_1(B)$ is bijective (e.g., when $F$ is simply connected). The Leray--Serre spectral sequence then yields
\begin{equation}\label{eq. spectral sequence}
    \small E^2_{p,q}=H_p(C_{\ast}(\widetilde{B})\otimes_{\Z [{\pi_1(B)}]}(H_q(F,\C)\otimes_{\C} \D_{\pi_1(B)}))\Rightarrow H_{p+q}(C_{\ast}(\widetilde{E})\otimes_{\Z [{\pi_1(E)}]}\D_{\pi_1(B)}),
\end{equation}
where $\Z\pi_1(B)$ acts on $H_q(F,\C)\otimes_{\C} \D_{\pi_1(B)}$ by the diagonal action and $\Z\pi_1(E)$ acts on $\D_{\pi_1(B)}$ via the induced isomorphism $\pi_1(E)\cong \pi_1(B)$. Thus, $E^2_{p,q}$ is the $\ell^2$-homology of $B$ twisted by the representation $\eta\colon B\rightarrow \mathrm{GL}(H_q(F,\C))$. Theorem \ref{thm. twisted l2 betti} then implies
\begin{equation}\label{eq. E2 of spectral}
    \dim_{\D_{\pi_1(B)}}E^2_{p,q}=b^{(2),\eta}_p(B)=b_q(F)\cdot b^{(2)}_p(B).
\end{equation}

Below, we prove Theorem \ref{thm. l2 betti fibration}, and Corollaries \ref{cor. surface base} and \ref{cor. 3-mnfl base}.

\begin{proof}[Proof of Theorem \ref{thm. l2 betti fibration}]
Suppose first that $\pi_1(B)$ is locally indicable. Since $\D_{\pi_1(B)}$ is a skew field, the spectral sequence \eqref{eq. spectral sequence} implies that $H_n(C_{\ast}(\widetilde{E})\otimes_{\Z [{\pi_1(E)}]}\D_{\pi_1(B)})$ is a a direct sum of subquotients of $E^2_{i,n-i}$ for $i=0,1,\cdots,n$. Together with \eqref{eq. E2 of spectral}, this implies \ref{item. l2 betti 1}.

Suppose that the assumption of \ref{item. l2 betti 2} holds. Then Theorem \ref{thm. twisted l2 betti} implies that the spectral sequence \eqref{eq. spectral sequence} stabilizes at the $E^2$-page with
\[
\dim_{\D_{\pi_1(B)}} E^2_{p,q}=
\begin{cases}
b^{(2)}_p(B),&\text{if }q=0\\
b_n(F)\cdot b^{(2)}_p(B),&\text{if }q=n\\
0,&\text{otherwise.}
\end{cases}
\]
Item \ref{item. l2 betti 2} follows from computation using the spectral sequence \eqref{eq. spectral sequence}.

Suppose that the assumption of \ref{item. l2 betti 3} holds. Then the Singer Conjecture implies that $b^{(2)}_{\ast}(B)=0$. Theorem \ref{thm. twisted l2 betti} implies that the $E^2$-page of the spectral sequence \eqref{eq. spectral sequence} is $0$, from which \ref{item. l2 betti 3} follows.
 
Suppose that the assumption of \ref{item. l2 betti 4} holds. Then Theorem \ref{thm. twisted l2 betti} implies that the spectral sequence \eqref{eq. spectral sequence} stabilizes at the $E^2$-page with
\[
\dim_{\D_{\pi_1(B)}} E^2_{p,q}=
\begin{cases}
b_q(F)\cdot b^{(2)}_n(B), &\text{if } p=n\\
0, &\text{otherwise.}
\end{cases}
\]
Item \ref{item. l2 betti 4} follows from computation using the spectral sequence \eqref{eq. spectral sequence}. This finishes the proof for the special case where $\pi_1(B)$ is locally indicable.

\smallskip

Let us consider the general case where $\pi_1(B)\cong \pi_1(E)$ are virtually locally indicable. Let $\widehat B$ be a $d$-sheeted cover of $B$ for some $d$ such that $\pi_1(\widehat B)$ is locally indicable, and let $\widehat E$ be the pullback of $E\rightarrow B$ along $\widehat B\rightarrow B$. Then we have a fibration
\[F\rightarrow \widehat E \rightarrow \widehat B\]
with $\pi_1(\widehat E)=\pi_1(\widehat B)$ locally indicable. 

Note that $\dim B\leqslant n$ if and only if $\dim \widehat B\leqslant n$, $B$ is a closed aspherical manifold if and only if so is $\widehat B$, and $B$ is a Riemannian manifold with negative sectional curvature if and only if so is $\widehat B$. By the above, items \ref{item. l2 betti 1}, \ref{item. l2 betti 2}, \ref{item. l2 betti 3}, \ref{item. l2 betti 4} hold with $\widehat E$ in place of $E$ and $\widehat B$ in place of $B$. By \cite{luck2002l2}*{Theorem 1.35 (9)}, we have for all $i$
\[b^{(2)}_i(\widehat E)=d\cdot b^{(2)}_i(E),~~b^{(2)}_i(\widehat B)=d\cdot b^{(2)}_i(B),\]
which finishes the proof.
\end{proof}

\begin{proof}[Proof of Corollary \ref{cor. surface base}]
In this case, the spectral sequence \eqref{eq. spectral sequence} has only one non-zero column, and thus stabilizes. By Example \ref{eg. locally indicable}, $\pi_1(B)$ is locally indicable. Thus, the desired result follows from Theorem \ref{thm. twisted l2 betti} and 
\[
b^{(2)}_i(B)=
\begin{cases}
-\chi(B),&\text{if }i=1\\
0,&\text{otherwise.}
\end{cases}\qedhere
\]
\end{proof}

\begin{proof}[Proof of Corollary \ref{cor. 3-mnfl base}]
By Lemma \ref{lem. eg of locally indicable} $\pi_1(B)$, is virtually locally indicable. Thus, the corollary follows from Theorem \ref{thm. l2 betti fibration} \ref{item. l2 betti 1} and the computation of the $\ell^2$-Betti number of $3$-manifolds \cite{lott1995l2}*{Theorem 0.1}.
\end{proof}

\section{Agrarian norm and Euler characteristic}\label{sec. euler characteristic}
If $M$ is a closed connected orientable irreducible $3$-manifold and $\phi\in H^1(M,\Z)$ is a character induced by a fibration $F\rightarrow M\rightarrow S^1$ of $M$ over the circle $S^1$, then $\|\phi\|_T=-\chi(F)$, where $\|\phi\|_T$ is the Thurston norm of $\phi$ \cite{thurston1986norm}. The goal of the current section is \cref{thm. euler characteristic} below, which generalizes this result of \cite{thurston1986norm}. In \cref{sec. aspherical group,sec. application}, we will apply \cref{thm. euler characteristic} to deduce the equality between the twisted Alexander and Thurston norms for fibered characters.

\begin{theorem}\label{thm. euler characteristic}
Let $G$ be a (type $F$)-by-(infinite cyclic) group and let $\phi\in H^1(G,\Z)$ be a primitive character. Then for every linear representation $\sigma\colon G\rightarrow \GL_n(D)$ over a skew field $D$, $\|\phi\|_{\sigma}$ and $\chi^{\sigma\otimes_{\Z} q}(\ker\phi)$ are well-defined and
\[\|\phi\|_{\sigma}=-\chi^{\sigma\otimes_{\Z} q}(\ker\phi),\]
where $q\colon G\rightarrow G_{\mathrm{fab}}$ is the natural quotient map from $G$ onto its maximal free abelian quotient $G_{\mathrm{fab}}$.
\end{theorem}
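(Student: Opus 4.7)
The plan is to realize $G$ explicitly as the fundamental group of a mapping torus of a $K(\ker\phi,1)$-space and to identify the agrarian torsion with a Milnor-type product associated to the monodromy. Fix $t\in G$ with $\phi(t)=1$ and set $K=\ker\phi$, so that $G=K\rtimes\langle t\rangle$. Well-definedness of $\|\phi\|_\sigma$ is immediate from Corollary \ref{cor. fxz}: any finite free $\Z G$-resolution of $\Z$ (which exists because $G$ is of type $F$) becomes $(\sigma\otimes_{\Z}q)$-acyclic upon tensoring with $D(X)^n$, so its agrarian torsion $\rho_{\sigma\otimes_{\Z}q}(C_\ast)$ is defined, and the associated norm is independent of the resolution by Proposition \ref{prop. homotopy invariance}.

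In the principal case where $K$ is itself of type $F$, I would pick a finite $K(K,1)$-CW complex $Y$ together with a cellular self-homotopy-equivalence $f\colon Y\to Y$ realizing conjugation by $t$ on $\pi_1(Y)=K$, so that the mapping torus $T_f$ is a finite $K(G,1)$. The cellular chain complex $C_\ast=C_\ast(\widetilde{T_f})$ of its universal cover then admits a canonical description as the mapping cone of $\mathrm{Id}-t\widetilde f\colon P_\ast\to P_\ast$, where $P_\ast=C_\ast(\widetilde Y)\otimes_{\Z K}\Z G$. Tensoring through $\sigma\otimes_{\Z}q$ with $D(X)^n$, this identification becomes
\[C_\ast\otimes_{\Z G}D(X)^n\;\cong\;\mathrm{Cone}\bigl(\mathrm{Id}-s\widetilde F\colon Q_\ast\to Q_\ast\bigr),\]
where $Q_\ast=C_\ast(\widetilde Y)\otimes_{\Z K}D(X)^n$ computes the agrarian homology of $K$ with coefficients $D(X)^n$, $s\in\GL_n(D(X))$ is the image of $t$ under $\sigma\otimes_{\Z}q$, and $\widetilde F$ is the induced chain self-map. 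Corollary \ref{cor. fxz} ensures this cone is acyclic, and hence $\mathrm{Id}-s\widetilde F$ is a quasi-isomorphism of $Q_\ast$.

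By the Milnor-type formula for the Reidemeister torsion of a mapping cone of a chain equivalence, expressed at the level of induced maps in homology, I expect
\[\rho_{\sigma\otimes_{\Z}q}(C_\ast)\;=\;\prod_q\bigl[\Det_{D(X)}\bigl(\mathrm{Id}-s\cdot\widetilde{f}_{q,\ast}\bigr)\bigr]^{(-1)^{q+1}},\]
where $\widetilde{f}_{q,\ast}$ is the induced map on $H_q(K;D(X)^n)$; it is an automorphism because $f$ is a homotopy equivalence, so $\Det_{D(X)}(\mathrm{Id}-x\,\sigma(t)\widetilde{f}_{q,\ast})$ is a polynomial in $x$ with constant term $1$ and nonzero leading term of degree $\dim_{D(X)}H_q(K;D(X)^n)$. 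Taking $\deg_x$ and summing,
\[\|\phi\|_\sigma\;=\;\deg_x\rho_{\sigma\otimes_{\Z}q}(C_\ast)\;=\;\sum_q(-1)^{q+1}\dim_{D(X)}H_q(K;D(X)^n)\;=\;-\chi^{\sigma\otimes_{\Z}q}(K),\]
which gives the desired equality and, along the way, the well-definedness of $\chi^{\sigma\otimes_{\Z}q}(K)$ in this case.

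The hard step is establishing the mapping-cone torsion formula above. I plan to derive it by working directly with the block-triangular description of the boundary of $C_\ast$ coming from the cell decomposition $C_\ast=B_\ast\oplus A_\ast$ used in the proof of Proposition \ref{prop. mapping torus and vanishing betti number}, and computing $\Det_{D(X)}(d+\gamma)$ step by step via Lemma \ref{lem. matrix reduction}, exploiting the fact that even if $\widetilde F$ is not chain-level invertible, its action on homology is. A secondary subtlety, which I expect to resolve separately, is the case of primitive $\phi$ for which $\ker\phi$ fails to be of type $F$: here I would use the Lyndon--Hochschild--Serre spectral sequence for $1\to K\to G\to\Z\to 1$ with coefficients $D(X)^n$, combined with the $(\sigma\otimes_{\Z}q)$-acyclicity of $G$, to establish the needed finite-dimensionality of $H_\ast(K;D(X)^n)$ and to justify the same homological computation.
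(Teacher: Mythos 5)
Your main case (where $\ker\phi$ is of type $F$) takes a genuinely different route from the paper: you realize the finite $K(G,1)$ as a mapping torus and aim at a Milnor-type formula expressing the agrarian torsion as an alternating product of Dieudonn\'e determinants of $\mathrm{Id}-s\widetilde f_{q,\ast}$ acting on homology, whereas the paper never invokes the monodromy at all — it passes to the finite complex $C_{\ast}(EG)\otimes_{\Z G}(D(Y)[\bar t^{\pm}])^n$ over $D(Y)[\bar t^{\pm}]$ and diagonalizes the differentials by elementary operations (\cref{lem. change of basis}), so that the $\bar t$-degrees of the diagonal determinants compute simultaneously the agrarian Betti numbers of $\ker\phi$ (via the induction isomorphism \eqref{eq. [t]}) and the torsion. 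Your route is sound in spirit (it is Milnor's infinite-cyclic-covering argument, and the usual indeterminacy of such torsion formulae by units of $\bar t$-degree zero is harmless here), but be aware that the torsion formula is exactly the hard content and is left unestablished: \cref{lem. matrix reduction} only yields invertibility and order bounds, and to extract the exact degree you must separate the variable from the coefficients, i.e.\ work with $H_q(K;D(Y)^n)$, on which the twisted monodromy is an honest $D(Y)$-linear automorphism, rather than with $H_q(K;D(X)^n)$ and a floating variable $x$ as written.

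The genuine gap is the general case, which the theorem requires: a primitive $\phi$ on a (type $F$)-by-(infinite cyclic) group need not have finitely generated kernel (take $G=F_2\times\Z$ and $\phi$ non-trivial on $F_2$), and your proposed reduction does not work. For the extension $1\to K\to G\to\Z\to 1$ with coefficients $(D(X))^n$, the Lyndon--Hochschild--Serre spectral sequence has only two non-zero columns, so the $(\sigma\otimes_{\Z}q)$-acyclicity of $G$ yields precisely that the invariants and coinvariants of the $\Z$-action on each $H_q(K;(D(X))^n)$ vanish, i.e.\ that $\mathrm{Id}$ minus the twisted monodromy is bijective there; this gives no finite-dimensionality of $H_q(K;(D(X))^n)$, hence not even the well-definedness of $\chi^{\sigma\otimes_{\Z}q}(\ker\phi)$, and it provides no finite complex over $\Z K$ on which to run the mapping-torus argument, so it cannot "justify the same homological computation." This is exactly what the paper's $D(Y)[\bar t^{\pm}]$-diagonalization is designed for: finiteness of $b^{\sigma\otimes_{\Z}\bar q}_k(\ker\phi)$ is obtained as $\deg_{\bar t}\Det_{D(X)}[\overline{\partial}_{k+1}]$, and the equality $\|\phi\|_{\sigma}=-\chi^{\sigma\otimes_{\Z}q}(\ker\phi)$ falls out of the same diagonal form. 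As it stands, your proposal would prove (modulo the deferred torsion formula) only the fibered case of the statement, not the theorem for arbitrary primitive $\phi$.
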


Below, we use the notation of the above theorem. Let $H=\ker \phi$, let $t\in G$ such that $\phi(t)=1$, let $\bar t=q(t)$, let $X$ be a basis of $G_{\mathrm{fab}}$ such that $\bar t\in X$ and $\phi(x)=0$ for all $x\in X\smallsetminus\{\bar t\}$, let $BG$ be a finite $K(G,1)$ CW-complex, let $EG$ be the universal cover of $BG$, let $\bar q\colon G\twoheadrightarrow G_{\mathrm{fab}}/\langle \bar t\rangle$ the natural quotient map, and let $Y=X\smallsetminus\{\bar t\}$. To emphasize the role played by $X$ and $Y$, we denote $\ore(DG_{\mathrm{fab}})$ by $D(X)$ and $\ore(DL)$ by $D(Y)$, where $L$ is the subgroup of $G_{\mathrm{fab}}$ generated by $Y$. 

For all $k$,
\[\dim_{D(X)}H_k(C_{\ast}(EG)\otimes_{\Z H} (D(X))^n)=\dim_{D(Y)}H_k(C_{\ast}(EG)\otimes_{\Z H} (D(Y))^n)\]
as $D(X)$ is flat over $D(Y)$, where $(C_{\ast}(EG),\partial^{EG}_{\ast})$ is the cellular chain complex of $EG$ and the right-hand side tensor product is taken with respect to the representation $\sigma\otimes_{\Z} \bar q$. Therefore,
\begin{equation}\label{eq. bar q}
    \chi^{\sigma\otimes_{\Z} q}(H)=\chi^{\sigma\otimes_{\Z} \bar q}(H).
\end{equation}

Note that there is an isomorphism of chain complexes of $D(Y)$-modules
\begin{equation}\label{eq. [t]}
    C_{\ast}(EG)\otimes_{\Z H} (D(Y))^n \xrightarrow[]{\cong} C_{\ast}(EG)\otimes_{\Z G} (D(Y)[\bar t^{\pm}])^n,
\end{equation}
which maps $e\otimes d$ to $e\otimes d$ for all $e\in C_{\ast}(EG)$ and $d\in (D(Y))^n$. The inverse of this map sends $e\otimes d\bar t^k$ to $(et^k)\otimes (\sigma(t))^{-k}(d)$. Note also the following isomorphism of chain complexes of $D(X)$-modules
\begin{equation}\label{eq. (t)}
    C_{\ast}(EG)\otimes_{\Z G} (D(Y)[\bar t^{\pm}])^n\otimes_{D(Y)[\bar t^{\pm}]}D(X)\cong C_{\ast}(EG)\otimes_{\Z G} (D(X))^n.
\end{equation}

For simplicity, let 
\[(C_{\ast},\partial_{\ast})=(C_{\ast}(EG)\otimes_{\Z G} (D(Y)[\bar t^{\pm}])^n,\partial^{EG}_{\ast}\otimes_{\Z G} \mathrm{id}_{(D(Y)[\bar t^{\pm}])^n}).\]
For $k\in\mathbb{N}$, let $\B^{EG}_k$ be a $\Z G$-basis of $C_k(EG)$ consisting of cells of dimension $k$, and let $V$ be the standard $D(Y)[\bar t^\pm]$-basis of $(D(Y)[\bar t^{\pm}])^n$. Then 
\[\B_k=\{\Delta\otimes v\mid \Delta\in \B^{EG}_k,v\in V\}\subset C_{\ast}\]
is a $D(Y)[\bar t^{\pm}]$-basis for $C_{\ast}$. By definition, $\|\phi\|_{\sigma}$ is computed using the basis $\B_{\ast}$. But in order to prove the theorem we will use another basis that is equivalent to $\B_{\ast}$.

\begin{lemma}\label{lem. change of basis}
There are two families of subsets of $C_{\ast}$, $\{\B'_k\}^{\infty}_{k=0}$ and $\{\B''_k\}^{\infty}_{k=0}$, such that the following hold for every $k$.
\begin{enumerate}[label=(\roman*)]
    \item\label{item. change basis 4} $\B'_k\cup\B''_k$ is a basis of $C_k$ and the change of basis matrix $M$ from $\B_k$ to $\B'_k\cup\B''_k$ satisfies $\Det_{D(X)}(M)=\pm 1$, where we think of $M$ as a matrix over $D(X)$ to take the determinant.
    \item\label{item. change basis 2} Denote by $\spanY$ the linear span over $D(Y)[\bar t^{\pm}]$. Then
    \begin{align*}
        \partial_k(\spanY(\B'_k))&\subset \spanY (\B''_{k-1}),\\
        \spanY(\B''_k)&=\ker \partial_k.
    \end{align*}
    \item\label{item. change basis 3} Let $\overline \partial_k\colon \spanY(\B'_k)\rightarrow \spanY (\B''_{k-1})$ be the restriction of $\partial_k$. Then the matrix representative of $\overline \partial_k$ under the bases $\B'_k$ and $\B''_{k-1}$, denoted $[\overline \partial_k]$, is a full-rank diagonal matrix over $D(Y)[\bar t^{\pm}]$.
\end{enumerate}
\end{lemma}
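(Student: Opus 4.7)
The plan is to leverage the fact that $R := D(Y)[\bar t^{\pm}]$ is a skew Laurent polynomial ring over the skew field $D(Y)$ in the central variable $\bar t$, making it a two-sided principal ideal domain in which one can perform polynomial division with respect to the $\bar t$-degree. As a consequence, any matrix over $R$ can be reduced to diagonal form via a finite sequence of row and column operations of just two types: (a) swapping two rows or columns, and (b) adding a left or right $R$-multiple of one row or column to another. Each such elementary operation corresponds to a matrix whose Dieudonn\'e determinant over the overring $D(X)$ equals $\pm 1$.

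The subsets $\B'_k,\B''_k$ are constructed inductively on $k$, starting from $k=0$. Since $C_{-1}=0$, we have $\partial_0=0$ and $\ker\partial_0=C_0$, so we set $\B''_0:=\B_0$ and $\B'_0:=\emptyset$. For the inductive step, assume $\B'_j,\B''_j$ have been defined for $j<k$. Because $\partial_{k-1}\partial_k=0$, the image of $\partial_k$ lies in $\ker\partial_{k-1}=\spanY(\B''_{k-1})$, so $\partial_k$ can be regarded as an $R$-linear map between the finite-rank free $R$-modules $C_k$ and $\spanY(\B''_{k-1})$. Applying the Euclidean reduction to this map, using only operations of types (a) and (b), yields a new basis $\B'_k\cup\B''_k$ of $C_k$ together with an updated basis of $\spanY(\B''_{k-1})$ (replacing the previous $\B''_{k-1}$) such that $\partial_k$ sends each element of $\B'_k$ to a non-zero $R$-multiple of a distinct element of the new $\B''_{k-1}$, and annihilates each element of $\B''_k$.

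Properties (ii) and (iii) are immediate from this construction. To verify (i), observe that at each level $k$ the total change of basis matrix from the original $\B_k$ to the final $\B'_k\cup\B''_k$ is a product of operations of types (a) and (b): first those producing $\B'_k\cup\B''_k$ at step $k$, and then those that revise $\B''_k\subset\ker\partial_k$ at step $k+1$. Since each factor has Dieudonn\'e determinant $\pm 1$ over $D(X)$, so does the product. The main subtlety to address is that revising $\B''_{k-1}$ during step $k$ might appear to disturb the diagonal form of $\overline{\partial}_{k-1}$ obtained at step $k-1$; this is in fact harmless, because $\partial_{k-1}$ vanishes identically on $\spanY(\B''_{k-1})=\ker\partial_{k-1}$, so any basis change internal to this subspace leaves the matrix of $\overline{\partial}_{k-1}\colon\spanY(\B'_{k-1})\to\spanY(\B''_{k-2})$ unchanged.
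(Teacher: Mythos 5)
Your construction mirrors the paper's (inductive diagonalisation of $\partial_{K+1}$ over $D(Y)[\bar t^{\pm}]$ by elementary operations, with the same observation that revising the basis inside $\spanY(\B''_K)=\ker\partial_K$ does not disturb $[\overline\partial_K]$), but there is a genuine gap at item \ref{item. change basis 3}. Your reduction only yields that each element of $\B'_k$ is sent to a non-zero $R$-multiple of a \emph{distinct} element of the new $\B''_{k-1}$; it does not rule out $|\B'_k|<|\B''_{k-1}|$, i.e.\ that some elements of $\B''_{k-1}$ are not hit at all. In that case $[\overline\partial_k]$ is a rectangular diagonal matrix, not the square matrix with all diagonal entries non-zero that the lemma (and its use in \cref{thm. euler characteristic}, where $\Det_{D(X)}[\overline\partial_{k+1}]$ is taken and the number of diagonal entries is $|\B''_k|$) actually requires; the paper's proof makes this explicit by noting that \ref{item. change basis 3} is equivalent to $|\B'_{K+1}|=|\B''_K|$.

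This equality is not a formal consequence of the Smith-type reduction: it is exactly where the standing hypothesis that $G$ is (type $F$)-by-(infinite cyclic) enters. The paper tensors with $D(X)$ over $D(Y)[\bar t^{\pm}]$ (flatness gives that the images of the constructed bases remain bases), so that $|\B'_{K+1}|=\rank_{D(X)}(\partial_{K+1}\otimes\mathrm{id}_{D(X)})$ and $|\B''_K|=\dim_{D(X)}\ker(\partial_K\otimes\mathrm{id}_{D(X)})$, and then invokes the $(\sigma\otimes_{\Z}q)$-acyclicity of $C_{\ast}(EG)\otimes_{\Z G}(D(X))^n$ from \cref{cor. fxz} (together with \eqref{eq. (t)}) to equate these two numbers. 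Your argument never uses acyclicity, so it cannot prove the full-rank (square) statement; adding this rank count would close the gap. The remaining parts of your proof, including the determinant-$\pm1$ bookkeeping for item \ref{item. change basis 4}, are fine and essentially identical to the paper's.
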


\begin{proof}
We prove the lemma by an induction on $k$. First note that \ref{item. change basis 4} through \ref{item. change basis 3} hold for $k=0$ with $\B'_0=\emptyset,\B''_0=\B_0$. Now suppose that we have found $\{\B'_k\}^K_{k=0}$ and $\{\B''_k\}^K_{k=0}$ that satisfy \ref{item. change basis 4} through \ref{item. change basis 3} for $k\leqslant K$. Let $M_{K+1}$ be the matrix representative of $\partial_{k+1}$ under the bases $\B_{K+1}$ and $\B'_K\cup\B''_K$.

By multiplying $M_{K+1}$ on the left and right by elementary matrices over $D(Y)[\bar t^{\pm}]$ whose diagonal entries are $\pm 1$, we can turn $M_{K+1}$ into a diagonal matrix $N_{K+1}$ over $D(Y)[\bar t^{\pm}]$. The left (resp. right) multiplication of elementary matrices corresponds to the change of the basis $\B'_K\cup \B''_K$ (resp. $\B_{K+1}$). Since non-zero elements of $\spanY(\B'_K)$ have non-zero boundaries, we have $\partial_{K+1}(C_{K+1})\subset \spanY(\B''_K)$. Therefore, we may assume that the change of basis process leaves $\B'_K$ invariant and turns $\B''_K$ into another basis of $\spanY(\B''_K)$. Since $\partial_K(\spanY(\B''_K))=0$, such a change of basis process will not change $[\overline\partial_K]$. Thus, by modifying $\B''_K$ if necessary, we may assume that the change of basis process leaves both $\B'_K$ and $\B''_K$ invariant.

Let $\B'_{K+1}$ (resp. $\B''_{K+1}$) be the part of the new basis of $C_{K+1}$ corresponding to the non-zero (resp. zero) diagonal entries of $N_{K+1}$. Then \ref{item. change basis 4} and \ref{item. change basis 2} follow immediately. Item \ref{item. change basis 3} is equivalent to $|\B'_{K+1}|=|\B''_K|$. Consider
\[C_{K+1}\otimes_{D(Y)[\bar t^{\pm}]}D(X)\xrightarrow[]{\partial_{K+1}\otimes_{D(Y)[\bar t^{\pm}]} \mathrm{id}_{D(X)}}C_K\otimes_{D(Y)[\bar t^{\pm}]}D(X).\]
And consider the subsets
\begin{align*}
\B'_{K+1}\otimes\{1\}, \B''_{K+1}\otimes\{1\} &\subset C_{K+1}\otimes_{D(Y)[\bar t^{\pm}]}D(X),\\
\B_K\otimes\{1\}, \B''_K\otimes\{1\} &\subset C_K\otimes_{D(Y)[\bar t^{\pm}]}D(X).
\end{align*}
Since $D(X)$ is flat over $D(Y)[\bar t^{\pm}]$, $(\B'_{K+1}\cup\B''_{K+1})\otimes\{1\}$ (resp. $(\B'_K\cup\B''_K)\otimes\{1\}$) is a $D(X)$-basis of $C_{K+1}\otimes_{D(Y)[\bar t^{\pm}]}D(X)$ (resp. $C_K\otimes_{D(Y)[\bar t^{\pm}]}D(X)$). 

Let $N$ be the matrix representative of $\partial_{K+1}\otimes_{D(Y)[\bar t^{\pm}]} \mathrm{id}_{D(X)}$ under the bases $(\B'_{K+1}\cup\B''_{K+1})\otimes\{1\}$ and $(\B'_K\cup\B''_K)\otimes\{1\}$. Then $N$ is the $(|\B'_{K+1}|+|\B''_{K+1}|)\times(|\B'_K|+|\B''_K|)$-matrix with $[\overline\partial_{K+1}]$ at the top left corner and $0$ elsewhere. In particular,
\[|\B'_{K+1}|=\rank_{D(X)} (\partial_{K+1}\otimes_{D(Y)[\bar t^{\pm}]} \mathrm{id}_{D(X)}).\]
Similarly,
\[|\B'_K|=\rank_{D(X)} (\partial_K\otimes_{D(Y)[\bar t^{\pm}]} \mathrm{id}_{D(X)}),\]
and thus
\[|\B''_K|=\dim_{D(X)}\ker (\partial_K\otimes_{D(Y)[\bar t^{\pm}]} \mathrm{id}_{D(X)}).\] 
Since $G$ is (type $F$)-by-(infinite cyclic), $C_{\ast}(EG)\otimes_{D(Y)[\bar t^{\pm}]} (D(X))^n$ is acyclic by \cref{cor. fxz}. Combining with equation \eqref{eq. (t)} this yields
\[\rank_{D(X)}(\partial_{K+1}\otimes_{D(Y)[\bar t^{\pm}]} \mathrm{id}_{D(X)})=\dim_{D(X)}\ker (\partial_K\otimes_{D(Y)[\bar t^{\pm}]} \mathrm{id}_{D(X)}),\]
from which \ref{item. change basis 3} follows.
\end{proof}

\begin{proof}[Proof of Theorem \ref{thm. euler characteristic}]
Fix $k\in\mathbb{N}$. Let $e_1,\cdots, e_{\ell_k}$ be the elements of $\B''_k$ and let $f_1,\cdots, f_{\ell_k}$ be the diagonal entries of $[\overline\partial_{k+1}]$. For $i=1,2,\cdots, \ell_k$, let
\[S_{k,i}=\{e_i\cdot \bar t^j \mid j=0,1,\cdots, (\deg_{\bar t}f_i)-1\}.\]
and let
\[S_k=\bigcup^{\ell_k}_{i=1}S_{k,i}\subset C_k.\]
Then there is a $D(Y)$-module isomorphism,
\[\spanY(\B''_k)/\im \overline\partial_{k+1}\cong \mathrm{span}_{D(Y)}(S_k).\]
By \eqref{eq. [t]},
\[b^{\sigma\otimes_{\Z}\bar q}_k(H)=\dim_{D(Y)}(\spanY(\B''_k)/\im \overline\partial_{k+1})=\deg_{\bar t}\Det_{D(X)} [\overline\partial_{k+1}],\]
where we think of $[\partial_{k+1}]$ as a matrix over $D(X)$ in order to take the determinant. It then follows from \eqref{eq. bar q} that
\begin{equation}\label{eq. euler characteristic 1}
    \chi^{\sigma\otimes_{\Z} q}(H)=\sum^{\infty}_{k=0}(-1)^k \deg_{\bar t}\Det_{D(X)} [\partial_{k+1}].
\end{equation}

 On the other hand, by Lemma \ref{lem. change of basis} \ref{item. change basis 4} and \ref{item. change basis 3}, $C_{\ast}$ decomposes as a direct sum of chain complexes of the form
\[0\rightarrow \spanY(\B'_k)\xrightarrow[]{[\overline\partial_k]}\spanY(\B''_{k-1})\rightarrow 0.\]
By tensoring with $D(X)$ we see that $C_{\ast}(EG)\otimes_{\Z G} (D(X))^n$ decomposes as the direct sum of chain complexes of the form
\[0\rightarrow \spanX(\B'_k\otimes\{1\})\xrightarrow[]{[\overline\partial_k]}\spanX(\B''_{k-1}\otimes\{1\})\rightarrow 0,\]
where we think of $[\partial_k]$ as a matrix over $D(X)$.

By Lemma \ref{lem. change of basis} \ref{item. change basis 4} and Remark \ref{rm. change of basis},
\begin{equation}\label{eq. euler characteristic 2}
    \|\phi\|_{\sigma}=\sum^{\infty}_{k=0}(-1)^k\deg_{\bar t}\Det_{D(X)} [\partial_k].
\end{equation}

The desired result follows from equations \eqref{eq. euler characteristic 1} and \eqref{eq. euler characteristic 2}.
\end{proof}

\section{Aspherical groups}\label{sec. aspherical group}
In this section, we prove the semi-norm property of the agrarian norm and the inequality between the twisted Alexander and Thurston norms for certain aspherical groups. Let $G$ be a finitely presentated group and let $q\colon G\rightarrow G_{\mathrm{fab}}$ be the natural homomorphism of $G$ onto its maximal free abelian quotient $G_{\mathrm{fab}}$. We start with a method to modify a given finite group presentation.

\begin{lemma}\label{lem. good presenation}
Let
$G$ be a group given by a finite presentation
\begin{equation}\label{eq. presentation 1}
    G=\langle X\mid \mathcal{R}\rangle.
\end{equation}
Then there exists a finite presentation
\begin{equation}\label{eq. presentation 2}
    G=\langle Y\mid \mathcal{S}\rangle.
\end{equation}
such that $q(Y)\smallsetminus\{0\}$ is a basis for $G_{\mathrm{fab}}$ and the presentation complexes of \eqref{eq. presentation 1}, \eqref{eq. presentation 2} are homotopy equivalent.

Moreover, if $\phi\in H^1(G,\Z)$ is a primitive integral character, then we can further guarantee that there exists $y\in Y$ such that $\phi(y)=1$ and $\phi(y')=0$ for all $y'\in Y\smallsetminus\{y\}$.
\end{lemma}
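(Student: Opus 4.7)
The plan is to change the generating set $X$ via Nielsen transformations, each realized by Tietze moves of type (T1) (introduce a generator with a defining relation) and (T2) (eliminate a generator via such a relation) that preserve the (simple) homotopy type of the presentation $2$-complex, so as to force the image of the new generators in $G_{\mathrm{fab}}$ to consist of a basis together with zeros, compatibly with $\phi$ when one is given.

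I first reduce to an abelian matrix problem. Writing $X=\{x_1,\dots,x_k\}$ and identifying $G_{\mathrm{fab}}\cong\Z^n$, composition with $q$ yields a surjection $A\colon\Z^X\twoheadrightarrow G_{\mathrm{fab}}$ whose $i$-th column is $q(x_i)$. Since $A$ is a surjection of free abelian groups, it admits a section $s$, so that $\Z^X = s(G_{\mathrm{fab}})\oplus \ker A$. Choose any basis $b_1,\dots,b_n$ of $G_{\mathrm{fab}}$ and any $\Z$-basis $c_{n+1},\dots,c_k$ of the free abelian group $\ker A$; then the vectors $s(b_1),\dots,s(b_n),c_{n+1},\dots,c_k$ form a basis of $\Z^X$, so the matrix $V$ they assemble lies in $\GL_k(\Z)$ and satisfies
\[
AV=[b_1\mid\cdots\mid b_n\mid 0\mid\cdots\mid 0].
\]

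Next I lift $V$ through the abelianization $\mathrm{Aut}(F(X))\twoheadrightarrow\GL_k(\Z)$ to an automorphism $\alpha$ of $F(X)$, and set $y_i=\alpha(x_i)$. Then $Y=\{y_1,\dots,y_k\}$ is a free basis of $F(X)$ with $q(y_i)=b_i$ for $i\leqslant n$ and $q(y_i)=0$ for $i>n$, so that $q(Y)\smallsetminus\{0\}=\{b_1,\dots,b_n\}$ is a basis of $G_{\mathrm{fab}}$. Rewriting each relator $r\in\mathcal R$ as a word in $Y$ yields a finite presentation $\langle Y\mid\mathcal S\rangle$ of $G$ of the same size as the original. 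Factoring $\alpha$ as a composition of elementary Nielsen transformations (of the form $x\mapsto xx'$, $x\mapsto x^{-1}$, or a transposition) and implementing each by introducing the image word as a new generator with its defining relation and then eliminating the old generator as a free face shows that the two presentation complexes are simple homotopy equivalent, and in particular homotopy equivalent. For the moreover part, primitivity of $\phi$ means $\phi=\bar\phi\circ q$ with $\bar\phi\colon G_{\mathrm{fab}}\twoheadrightarrow\Z$ surjective, and splitting $0\to\ker\bar\phi\to G_{\mathrm{fab}}\to\Z\to 0$ provides a basis $b_1,\dots,b_n$ of $G_{\mathrm{fab}}$ with $\bar\phi(b_1)=1$ and $\bar\phi(b_i)=0$ for $i>1$; running the construction with this basis yields $y=y_1$.

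The main technical point lies in implementing an elementary Nielsen transformation on the $2$-complex level without altering the homotopy type: after introducing a new generator by (T1), one must rewrite the other relators in terms of it before removing the old one, and this rewriting must be carried out by homotoping the attaching maps of the remaining $2$-cells across the disk spanned by the defining relation introduced in (T1). This is a standard manipulation in combinatorial group theory and preserves simple homotopy type; crucially, no ``relator-only'' (T3)/(T4) moves, which could otherwise change the homotopy type, are ever required, and all intermediate presentations remain finite.
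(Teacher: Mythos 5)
Your proof is correct and takes essentially the same route as the paper: change the generating set by an automorphism of the free group (a sequence of Nielsen transformations) chosen so that the images in $G_{\mathrm{fab}}$ become a basis together with zeros, adapted to $\phi$ in the moreover case, and realize each transformation on the presentation complex without changing its homotopy type. The differences are only in the bookkeeping: you make explicit the linear algebra (choosing $V\in\GL_k(\Z)$ and lifting through $\mathrm{Aut}(F(X))\twoheadrightarrow\GL_k(\Z)$) that the paper leaves implicit, and you verify homotopy invariance via Tietze expansion/collapse, whereas the paper checks it by subdividing an edge of the presentation complex.
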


\begin{proof}
For $x_i,x_j\in X$, by replacing $x_i$ with $x_ix_j$ or $x_ix^{-1}_j$ and doing the corresponding replacement among the relations of $\mathcal{R}$ that contain $x_i$, we obtain a presentation $G=\langle X'\mid \mathcal{R}'\rangle$. We call the passage from $\langle X\mid \mathcal{R}\rangle$ to $\langle X'\mid \mathcal{R}'\rangle$ a \textit{Nielsen transformation}. Let $K$ (resp. $K'$) be the presentation complex of $\langle X\mid \mathcal{R}\rangle$ (resp. $\langle X'\mid \mathcal{R}'\rangle$). Then to pass from $K$ to $K'$, one can subdivide the edge labeled by $x_i$ into two edges and identify one of these new edges with $x_j$ or $x^{-1}_j$, from which it is not hard to see that $K$ and $K'$ are homotopy equivalent, and thus $K'$ also has contractible universal cover. Now, starting from $\langle X\mid \mathcal{R}\rangle$ and inductively performing Nielsen transformations, we can obtain the desired presentation.
\end{proof}

For the rest of this section, suppose $G$ is a semi-direct product $H\rtimes \Z$ with $H$ a type $F$ subgroup. Let $\sigma\colon G\rightarrow \GL_n(D)$ be a representation over a skew field $D$. Then $G$ is $(\sigma\otimes_{\Z} q)$-acyclic by Proposition \ref{prop. mapping torus and vanishing betti number}, and thus the agrarian norm $\|\cdot\|_{\sigma}$ is well-defined. If $\rank (G_{\mathrm{fab}})=0$ then all agrarian norms are trivial. So below we assume that $\rank(G_{\mathrm{fab}})\geqslant 1$. 

\subsection{Semi-norm property}
\begin{lemma}\label{lem. rkGfb=1 case}
If $G_{\mathrm{fab}}\cong \Z$ then $\|\cdot\|_{\sigma}$ is a semi-norm if and only if $\chi(H)\leqslant 0$.
\end{lemma}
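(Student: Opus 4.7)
The plan is to reduce the semi-norm condition, in the one-dimensional setting $H^1(G,\R)\cong\R$, to the non-negativity of the agrarian norm of a single primitive character, and then to compute that quantity explicitly by combining \cref{thm. euler characteristic} and \cref{prop. euler characteristic}.

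First I would observe that since $G_{\mathrm{fab}}\cong\Z$ is Hopfian, the canonical projection $p\colon G=H\rtimes\Z\twoheadrightarrow\Z$ must agree up to sign with the abelianization map $q\colon G\twoheadrightarrow G_{\mathrm{fab}}$, so $\ker q=\ker p=H$. Let $\phi$ be a primitive generator of $H^1(G,\Z)\cong\Z$, so that $\ker\phi=H$. Applying \cref{thm. euler characteristic} to $\phi$ then gives
\[
\|\phi\|_{\sigma}=-\chi^{\sigma\otimes_{\Z}q}(H).
\]
Next, since $H$ is of type $F$ and $(\sigma\otimes_{\Z}q)|_H$ is an $n$-dimensional representation of $H$ into $\GL_n(\ore(DG_{\mathrm{fab}}))$, and $\ore(DG_{\mathrm{fab}})$ is a skew field, \cref{prop. euler characteristic} directly yields
\[
\chi^{\sigma\otimes_{\Z}q}(H)=n\cdot\chi(H),
\]
and consequently $\|\phi\|_{\sigma}=-n\cdot\chi(H)$.

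To conclude, every element of $H^1(G,\Z)$ is of the form $k\phi$ for some $k\in\Z$, and the definition of the agrarian norm supplies the homogeneity $\|k\phi\|_{\sigma}=|k|\cdot\|\phi\|_{\sigma}$ (for negative $k$ one simply replaces the basis element $x$ by $x^{-1}$, which leaves $\deg_x$ unchanged). With this absolute-value homogeneity in hand, the triangle inequality in one dimension is automatic as soon as $\|\phi\|_{\sigma}\geqslant 0$, while for $\|\phi\|_{\sigma}<0$ both non-negativity and the triangle inequality fail (take $k=\ell=1$). Therefore $\|\cdot\|_{\sigma}$ is a semi-norm if and only if $-n\cdot\chi(H)\geqslant 0$, equivalently, using $n\geqslant 1$, if and only if $\chi(H)\leqslant 0$.

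The only subtle point worth flagging is that the quantity $k\cdot\deg_x\rho_{\sigma\otimes_{\Z}q}(C_\ast)$ that defines $\|\cdot\|_{\sigma}$ can take negative values when the agrarian torsion represents a genuinely virtual polytope, so non-negativity is not built into the definition and must really be extracted from the Euler characteristic identity above.
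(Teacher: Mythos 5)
Your proposal is correct and takes essentially the same route as the paper, whose proof simply cites \cref{thm. euler characteristic} and \cref{prop. euler characteristic}; you have merely spelled out the identification $\ker\phi=H$ when $G_{\mathrm{fab}}\cong\Z$, the resulting identity $\|\phi\|_{\sigma}=-n\cdot\chi(H)$, and the reduction of the semi-norm property in the one-dimensional case $H^1(G,\R)\cong\R$ to non-negativity via homogeneity. One cosmetic remark: when $\|\phi\|_{\sigma}<0$ the instance $k=\ell=1$ of the triangle inequality actually holds with equality, and the failing instance is rather the pair $\phi,-\phi$; but since non-negativity already fails, this does not affect the argument.
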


\begin{proof}
The desired conclusion follows from Theorem \ref{thm. euler characteristic} and Proposition \ref{prop. euler characteristic}.
\end{proof}

Next, let us further suppose that $G$ is \textit{aspherical}, i.e., $G$ has a finite presentation 
\begin{equation}\label{eq. good presentation 3}
    G=\langle X\mid \mathcal{R}\rangle
\end{equation}
such that the corresponding presentation complex has contractible universal cover. By Lemma \ref{lem. good presenation} we may assume that $X=\{x_i\}^{k+m}_{i=1}$ with $\{q(x_i)\}^k_{i=1}$ being a basis for $G_{\mathrm{fab}}$ and $q(x_{k+1})=q(x_{k+2})=\cdots=q(x_{k+m})=0$.

\begin{proposition}\label{prop. semi-norm}
Suppose that $G$ is a group that is (type $F$)-by-(infinite cyclic) and aspherical, and satisfies $\rank G_{\mathrm{fab}}\geqslant 2$. Then for every linear representation $\sigma\colon G\rightarrow \GL_n(D)$ of $G$ over a skew field $D$, the function $\|\cdot\|_{\sigma}$ is a semi-norm.
\end{proposition}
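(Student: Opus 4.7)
The plan is to reduce the semi-norm property to showing that the agrarian polytope $P(\rho_{\sigma\otimes_{\Z}q}(C_\ast))\in\mathcal{P}(G_{\mathrm{fab}})$ is a \emph{single} (non-virtual) polytope. Indeed, given the definition of $\|\cdot\|_\sigma$ via $\deg_x\rho_{\sigma\otimes_{\Z}q}$, an unwinding shows $\|\phi\|_\sigma$ equals the $\phi$-diameter of $P(\rho_{\sigma\otimes_{\Z}q})$, which is the support-function expression $h_{P(\rho)}(\phi)+h_{P(\rho)}(-\phi)$. When $P(\rho)$ is a genuine polytope, this diameter is automatically a semi-norm on $H^1(G,\R)$.

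To get started, I would apply \cref{lem. good presenation} to the aspherical presentation \eqref{eq. good presentation 3} of $G$ to obtain one in which the generators project onto a basis of $G_{\mathrm{fab}}$ together with zeros. Homotopy invariance (\cref{prop. homotopy invariance}) allows me to compute the agrarian polytope from the cellular chain complex $C_\ast$ of the universal cover of the new presentation complex, which is still a $K(G,1)$. Because $G$ is (type $F$)-by-(infinite cyclic) we have $\chi(G)=0$, so the deficiency is $1$ and $C_\ast$ is concentrated in degrees $0,1,2$; by \cref{cor. fxz} it is $(\sigma\otimes_{\Z}q)$-acyclic.

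The hypothesis $\rank G_{\mathrm{fab}}\geqslant 2$ guarantees two generators $y_1,y_2$ whose images $\bar y_1,\bar y_2$ are linearly independent basis elements of $G_{\mathrm{fab}}$. By \cref{lem. matrix reduction}, each block $\mathrm{Id}-\sigma(y_i)\bar y_i$ is invertible in $M_n(\ore(DG_{\mathrm{fab}}))$, so either can be used to split $\partial_1\otimes\mathrm{id}$. Choosing the splitting concentrated in the $y_i$-coordinate and applying block row/column swaps to compute the Dieudonn\'e determinant of $\partial+\gamma$, we obtain
\[
\rho \;=\;\pm\det(A_i)\cdot\det(\mathrm{Id}-\sigma(y_i)\bar y_i)^{-1},
\]
where $A_i$ is the submatrix of $\partial_2\otimes\mathrm{id}$ with the $y_i$-block-row removed. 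Each $P(\det A_i)$ is a single polytope by \cref{thm. single polytope}, and $P(\det(\mathrm{Id}-\sigma(y_i)\bar y_i))=[0,n\bar y_i]$. Thus in $\mathcal{P}(G_{\mathrm{fab}})$ we get the two identities
\[
P(\rho)=P(\det A_1)-[0,n\bar y_1]=P(\det A_2)-[0,n\bar y_2].
\]

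The main obstacle, and the step where I expect the real work to lie, is to deduce from these identities that $P(\rho)$ is a single polytope, equivalently that $[0,n\bar y_i]$ is a Minkowski summand of $P(\det A_i)$. This is precisely the place where the rank assumption $\geqslant 2$ is used essentially: the equality $P(\det A_1)+[0,n\bar y_2]=P(\det A_2)+[0,n\bar y_1]$ together with the linear independence of $\bar y_1,\bar y_2$ constrains the supporting hyperplanes of the polytopes $P(\det A_i)$ enough to extract the required summand decomposition. Once this Minkowski-summand step is completed, $P(\rho)$ is represented by a genuine polytope, and the semi-norm property follows at once from the standard fact that the diameter function of a polytope is a semi-norm on the dual space.
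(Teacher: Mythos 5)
Your proposal is correct and follows essentially the same route as the paper: the same adapted presentation and homotopy invariance, the same splitting-off of the invertible block $\mathrm{Id}-\sigma(y_i)q(y_i)$ (which the paper packages as a short exact sequence of chain complexes together with multiplicativity of Reidemeister torsion), the resulting identity $P(\rho)=P\bigl(\Det (\sigma\otimes_{\Z}q)(A_i)\bigr)-P\bigl(\Det(\mathrm{Id}-\sigma(y_i)q(y_i))\bigr)$ for two generators with independent images, and the same use of $\rank G_{\mathrm{fab}}\geqslant 2$. The Minkowski-summand step you flag as ``the real work'' is exactly where the paper also concludes at once, and it is routine rather than deep: the common function $h_{P(\Det A_1)}-h_{Q_1}=h_{P(\Det A_2)}-h_{Q_2}$ is positively homogeneous, piecewise linear, and locally convex away from the codimension-two set $\{\phi\mid \phi(\bar y_1)=\phi(\bar y_2)=0\}$, hence convex, hence the support function of a genuine polytope, which gives the single-polytope conclusion and the semi-norm property.
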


\begin{remark}
The above proposition is no longer true if $\rank G_{\mathrm{fab}}\geqslant 2$ is dropped. An easy example is given by $G=\Z$.
\end{remark}

\begin{proof}
Let $K$ be the presentation complex of \eqref{eq. good presentation 3}. Consider the cellular chain complex of $\widetilde{K}$, the universal cover of $K$.

\begin{tikzcd}
C_{\ast}\colon & 0 \arrow[r] & \Z G^{k+m-1} \arrow[r,"M_2"] & \Z G^{k+m} \arrow[r,"M_1"] & \Z G \arrow[r] & 0,
\end{tikzcd}

\noindent
where $M_2$ is a $(k+m)\times(k+m-1)$ matrix over $\Z G$ and $M_1=(1-x_1,\cdots,1-x_{k+m})$. Note that we must have $\Z G^{k+m-1}$ in dimension $2$ as $\chi(G)=0$ (see, e.g., Proposition \ref{prop. euler characteristic} and Corollary \ref{cor. fxz}). The corresponding maps are given by the left multiplication by $M_2$ and $M_1$.

Let

\begin{tikzcd}
A_{\ast}\colon  & 0 \arrow[r] & \Z G^{k+m-1} \arrow[r,"M'_2"] & \Z G^{k+m-1} \arrow[r] & 0 \arrow[r] & 0,\\
B_{\ast}\colon  & 0 \arrow[r] & 0 \arrow[r] & \Z G \arrow[r,"1-x_1"] & \Z G \arrow[r] & 0,
\end{tikzcd}

\noindent
where $M'_2$ is the matrix obtained from $M_2$ by deleting the first row. We then have a short exact sequence of $\Z G$-chain complexes
\begin{equation}\label{eq. exact for aspherical}
   0\rightarrow A_{\ast}\rightarrow C_{\ast} \rightarrow B_{\ast} \rightarrow 0.
\end{equation}

Let $Y=\{q(x_i)\}^k_{i=1}$. We identify $\ore(DG_{\mathrm{fab}})$ with $D(Y)$ to emphasize the role played by $Y$. By tensoring \eqref{eq. exact for aspherical} with $(D(Y))^n$ using the representation $\sigma\otimes_{\Z} q$, we obtain
\begin{equation}\label{eq. exact for ashperical 1}
    0\rightarrow A_{\ast}\otimes_{\Z G} (D(Y))^n\rightarrow C_{\ast}\otimes_{\Z G} (D(Y))^n \rightarrow B_{\ast}\otimes_{\Z G} (D(Y))^n \rightarrow 0.
\end{equation}

Consider the unique non-zero differential of $B_{\ast}\otimes_{\Z G} (D(Y))^n$, which is given by the left multiplication by the matrix
\[(\sigma\otimes_{\Z} q)(1-x_1)=\mathrm{Id}+\sigma(-x_1)q(x_1).\]
Every entry of $\sigma(-x_1)q(x_1)$ has $q(x_1)$-degree at least $1$. So Lemma \ref{lem. matrix reduction} implies that $(\sigma\otimes_{\Z} q)(1-x_1)$ is invertible, and thus the chain complex $B_{\ast}\otimes_{\Z G} (D(Y))^n$ is exact. This, together with the exactness of $C_{\ast}\otimes_{\Z G}(D(Y))^n$ (by Corollary \ref{cor. fxz}), yields that $A_{\ast}\otimes_{\Z G} (D(Y))^n$ is also exact and \eqref{eq. exact for ashperical 1} is a short exact sequence of chain complexes.

Let $\rho_A$ (resp. $\rho_B,\rho_C$) be the Riedemeister torsion of $A_{\ast}\otimes_{\Z G} (D(Y))^n$ (resp. $B_{\ast}\otimes_{\Z G} (D(Y))^n,C_{\ast}\otimes_{\Z G} (D(Y))^n$). Then (see, e.g.,  \cite{cohen1973course}*{(17.2)})
\begin{equation}\label{eq. structure of torsion}
    \rho_C=\rho_A\cdot\rho_B.
\end{equation}

Let $P\colon D(Y)\rightarrow \mathcal{P}(G_{\mathrm{fab}})$ be the polytope homomorphism. Equation \eqref{eq. structure of torsion} implies
\begin{equation}\label{eq. structure of polytope}
    P(\rho_C)=P(\Det_{D(Y)} (\sigma\otimes_{\Z} q)(M'_2))-P(\Det_{D(Y)}(\sigma\otimes_{\Z} q)(1-x_1)).
\end{equation}

Since $M'_2$ is a matrix over $\Z G$, Theorem \ref{thm. single polytope} implies that 
\[P(\Det_{D(Y)} (\sigma\otimes_{\Z} q)(M'_2))\in P(D[Y^{\pm}])\]
is a single polytope, and thus \eqref{eq. structure of polytope} implies
\begin{equation}\label{eq. pre single polytope for aspherical}
    P(\rho_C)\in P(D[Y^{\pm}])-P(D[x^{\pm}_1]).
\end{equation}

By the assumption $\rank G_{\mathrm{fab}}\geqslant 2$ we also have $q(x_2)\in Y$. The above argument with $x_2$ in place of $x_1$ yields
\[P(\rho_C)\in P(D[Y^{\pm}])-P(D[x^{\pm}_2]),\]
which together with \eqref{eq. pre single polytope for aspherical} implies that $P(\rho_C)$ is a single polytope, which in turn yields the desired result.
\end{proof}

\subsection{Inequality between the Alexander and Thurston norms}
The inequality between the Alexander and Thurston norms for $3$-manifolds was discovered by McMullen \cite{mcmullen2002alexander}, whose result was then generalized by Friedl--Kim \cite{friedl2008twisted} and Funke and the first author \cite{funke2018alexander}. In the current and subsequent sections, we recover the result of \cite{friedl2008twisted} and generalize the result of \cite{funke2018alexander}.

\begin{theorem}\label{thm. inequality for apspherical group}
Suppose that $G$ is an aspherical (Lewin type $F$)-by-(infinite cyclic) group. Let $\|\cdot\|_T$ be the Thurston norm of $G$. Then for all finite dimensional complex representations $\sigma\colon G\rightarrow \GL_n(\C)$ and all $\phi\in H^1(G,\Z)$, one has
\[
    \|\phi\|_{\sigma}\leqslant n\cdot \|\phi\|_T.
\]
Moreover, equality holds if $\ker \phi$ is of type $F$.
\end{theorem}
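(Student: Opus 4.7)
The plan is as follows. By homogeneity of both norms under $\phi \mapsto k\phi$, we may assume $\phi$ is primitive. Applying \cref{lem. good presenation} yields a finite presentation $G = \langle x_1,\dots,x_g \mid r_1,\dots,r_h\rangle$ with $q(x_1),\dots,q(x_g)$ spanning $G_{\mathrm{fab}}$ and $\phi(x_1)=1$, $\phi(x_i)=0$ for $i>1$. Asphericity makes the presentation $2$-complex $K$ into a $K(G,1)$, so $C_*(\widetilde K)$ is a finite free $\Z G$-resolution of $\Z$. Since $G$ is (type $F$)-by-$\Z$, we have $\chi(G)=0$ and hence $h=g-1$; further, \cref{cor. fxz} (applied both to $\sigma$ and to $\tau$) ensures that $C_*(\widetilde K)$ is both $(\sigma\otimes q)$-acyclic and $(\tau\otimes q)$-acyclic, so the relevant agrarian torsions are defined.

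Following the splitting used in the proof of \cref{prop. semi-norm}, decompose $C_*(\widetilde K)$ as an extension $0 \to A_* \to C_*(\widetilde K) \to B_* \to 0$ where
\[ A_* = \bigl(0 \to \Z G^{g-1} \xrightarrow{M_2'} \Z G^{g-1} \to 0\bigr), \qquad B_* = \bigl(0 \to \Z G \xrightarrow{1-x_1} \Z G \to 0\bigr), \]
and $M_2'$ is obtained from the second boundary matrix of $C_*(\widetilde K)$ by deleting the row indexed by $x_1$. Multiplicativity of agrarian torsion (as in the proof of \cref{prop. semi-norm}) yields
\[ \|\phi\|_\sigma = \deg_{\bar t}\Det_{\C(X)}(\sigma\otimes q)(M_2') - \deg_{\bar t}\Det_{\C(X)}(\sigma\otimes q)(1-x_1), \]
and the analogous identity for $\|\phi\|_T$ with $\tau\otimes q$ in place of $\sigma\otimes q$.

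To compare the two formulas, use \cref{thm. extension} to extend $\sigma\otimes_\C\tau$ to a ring homomorphism $\widetilde\sigma \colon \D_G \to M_n(\D_G)$, and then iterate \cref{cor. ring homo extension} to extend further to $\D_G(X) \to M_n(\D_G(X))$; by construction $\widetilde\sigma((\tau\otimes q)(N))=(\sigma\otimes_\C\tau\otimes_\Z q)(N)$ for every matrix $N$ over $\Z G$. Combining \cref{lem. key lem} with \cref{lem. extension and degree} applied to $\widetilde\sigma$ yields
\[ \deg_{\bar t}\Det_{\C(X)}(\sigma\otimes q)(M_2') \leqslant \deg_{\bar t}\Det_{\D_G(X)}(\sigma\otimes_\C\tau\otimes_\Z q)(M_2') = n\cdot\deg_{\bar t}\Det_{\D_G(X)}(\tau\otimes q)(M_2'). \]
For the remaining terms, since $\sigma(x_1)\in\GL_n(\C)$ the polynomial $\Det_{\C(X)}(\mathrm{Id}-\sigma(x_1)\bar t)=\prod_i(1-\lambda_i\bar t)$ has no zero roots and hence $\bar t$-degree exactly $n$, matching $n\cdot\deg_{\bar t}\Det_{\D_G(X)}(1-\tau(x_1)\bar t)=n\cdot 1=n$. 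Subtracting these identical denominators from the two sides of the inequality gives $\|\phi\|_\sigma\leqslant n\|\phi\|_T$.

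For the moreover clause, when $\ker\phi$ is of type $F$, \cref{thm. euler characteristic} identifies $\|\phi\|_\sigma = -\chi^{\sigma\otimes q}(\ker\phi)$ and $\|\phi\|_T = -\chi^{\tau\otimes q}(\ker\phi)$, while \cref{prop. euler characteristic} applied to a finite free $\Z\ker\phi$-resolution of $\Z$ yields $\chi^{\sigma\otimes q}(\ker\phi)=n\cdot\chi(\ker\phi)$ and $\chi^{\tau\otimes q}(\ker\phi)=\chi(\ker\phi)$, so $\|\phi\|_\sigma=n\|\phi\|_T$ is immediate. The subtlest step I anticipate is verifying that the denominator terms match \emph{exactly} (not merely up to inequality) across the two rings, so that the inequality between the numerator determinants can be subtracted cleanly; this hinges on the explicit form of the characteristic polynomial of the invertible matrix $\sigma(x_1)$.
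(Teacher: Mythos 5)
Your proposal is correct and follows essentially the same route as the paper's proof: reduction to a primitive character and a good presentation via \cref{lem. good presenation}, the same $A_\ast$/$B_\ast$ splitting and torsion multiplicativity as in \cref{prop. semi-norm}, the comparison of determinants via \cref{lem. key lem}, \cref{thm. extension}, \cref{cor. ring homo extension} and \cref{lem. extension and degree}, the degree-$n$ versus degree-$1$ computation for the $(1-x_1)$ factor, and the equality case via \cref{thm. euler characteristic}. Your eigenvalue phrasing of the denominator computation is just a restatement of the paper's observation that the top coefficient $\Det\sigma(-x_1)$ and constant term $1$ are both non-zero.
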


\begin{proof}
Without loss of generality, we may assume that $\phi$ is a primitive integral character. By Lemma \ref{lem. good presenation}, there is a finite presentation
\[G=\langle x_1,\cdots,x_{k+m}\mid \mathcal{R}\rangle\]
such that 
\begin{enumerate}
    \item[(i)] $\{q(x_i)\}^k_{i=1}$ is a basis for $G_{\mathrm{fab}}$;
    \item[(ii)] $q(x_{k+1})=q(x_{k+2})=\cdots=q(x_{k+m})=0$;
    \item[(iii)] $\phi(x_1)=1$ and $\phi(x_2)=\phi(x_3)=\cdots=\phi(x_k)=0$.
\end{enumerate}

Let $\D_G$ be the Linnell skew field of $G$ and let $\tau\colon \C G\rightarrow \D_G$ be the natural embedding. We identify $\ore(\C G_{\mathrm{fab}})$ (resp. $\ore(\D_G G_{\mathrm{fab}})$) with $\C(Y)$ (resp. $\D_G(Y)$) to emphasize the role played by $Y$. For simplicity, we also denote $q(x_1)$ by $s$. Equation \eqref{eq. structure of polytope} implies
\begin{equation}\label{eq. 11}
    \|\phi\|_{\sigma}=\deg_s\Det_{\C(Y)} (\sigma\otimes_{\Z} q)(M'_2)-\deg_s\Det_{\C(Y)} (\sigma\otimes_{\Z} q)(1-x_1).
\end{equation}

Similarly,
\begin{equation}\label{eq. 12}
\|\phi\|_T =\deg_s\Det_{\D_G(Y)} (\tau\otimes_{\Z} q)(M'_2)-\deg_s\Det_{\D_G(Y)} (\tau\otimes_{\Z} q)(1-x_1),
\end{equation}

Since $G$ is Lewin \cite{jaikin2020universality}*{Theorem 3.7 (3)}, \cref{lem. key lem} implies
\begin{equation}\label{eq. 796}
    \deg_s\Det_{\C(Y)} (\sigma\otimes_{\Z} q)(M'_2)
    \leqslant \deg_s\Det_{\D_G(Y)} (\sigma\otimes_{\C}\tau\otimes_{\Z} q)(M'_2).
\end{equation}

Consider the representation $\sigma\otimes_{\C}\tau\colon G\rightarrow \mathrm{GL}_n(\D_G)$. Theorem \ref{thm. extension} extends $\sigma\otimes_{\C}\tau$ to a ring homomorphism $\ts\colon \D_G\rightarrow M_n(\D_G)$. By repeatedly using Corollary \ref{cor. ring homo extension}, we further extends $\ts$ to ring homomorphism $\ts\colon \D_G(Y)\rightarrow M_n(\D_G(Y))$ such that $\ts(y)=\mathrm{Id}\cdot y$ for all $y\in Y$. We have
\[\ts\big((\tau\otimes_{\Z}q)(M'_2)\big)=(\sigma\otimes_{\C}\tau\otimes_{\Z} q)(M'_2).\]

Lemma \ref{lem. extension and degree} thus implies
\begin{equation}\label{eq. 810}
    \deg_s\Det_{\D_G(Y)} (\sigma\otimes_{\C}\tau\otimes_{\Z} q)(M'_2)=n\cdot \deg_s\Det_{\D_G(Y)} (\tau\otimes_{\Z} q)(M'_2).
\end{equation}

Think of $\Det_{\C(Y)}(\sigma\otimes_{\Z} q)(1-x_1)$ as a polynomial in $s$ with coefficient in ${\C(Y\smallsetminus\{s\})}$. Then the highest power of $s$ in $\Det_{\C(Y)}(\sigma\otimes_{\Z} q)(1-x_1)$ is $s^n$ with coefficient $\Det_{\C(Y)}\sigma(-x_1)$. The lowest power of $s$ in $\Det_{\C(Y)}(\sigma\otimes_{\Z} q)(1-x_1)$ is $s^0=1$ with coefficient $1$. Thus,
\begin{equation}\label{eq. 92146}
    \deg_s\Det_{\C(Y)} (\sigma\otimes_{\Z} q)(1-x_1)=n=n\cdot \deg_s\Det_{\D_G(Y)} (\tau\otimes_{\Z} q)(1-x_1).
\end{equation}

We conclude from \eqref{eq. 11}, \eqref{eq. 12}, \eqref{eq. 796}, \eqref{eq. 810} and \eqref{eq. 92146} that
\[\|\phi\|_{\sigma}\leqslant \|\phi\|_{\sigma\otimes_{\C}\tau}=n\cdot\|\phi\|_T.\]

If $\ker\phi$ is of type $F$, then by Theorem \ref{thm. euler characteristic}, 
\[\|\phi\|_{\sigma}=-n\cdot \chi(\ker\phi)=n\cdot \|\phi\|_T.\qedhere\]
\end{proof}

\section{Application to free-by-cyclic and $3$-manifold groups}\label{sec. application}

\subsection{Free-by-cyclic groups}
Let $G$ be a (finitely generated free)-by-(infinite cyclic) group. Then $G$ is locally indicable. In particular, the Thurston norm $\|\cdot\|_T$ of $G$ is well-defined.

\begin{theorem}\label{thm. free-by-cyclic}
For any (finitely generated free)-by-(infinite cyclic) group $G$ and any representation $\sigma\colon  G\rightarrow \GL_n(D)$, the function $\|\cdot\|_{\sigma}$ is a semi-norm.

Moreover, if $D=\C$ is the field of complex numbers, then for every $\phi\in H^1(G,\Z)$,
\begin{equation}\label{eq. 1000}
    \|\phi\|_{\sigma}\leqslant n\cdot \|\phi\|_T
\end{equation}
and equality holds when $\phi$ is a fibered character, i.e., when $\ker\phi$ is finitely generated.
\end{theorem}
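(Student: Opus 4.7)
The plan is to reduce everything to Theorem \ref{thm. inequality for apspherical group} by verifying its hypotheses for $G$, and to handle the semi-norm assertion via the dichotomy on $\rank G_{\mathrm{fab}}$ developed in \cref{sec. aspherical group}. Write $G = F \rtimes_\alpha \Z$ with $F$ a finitely generated free group and $\alpha \in \mathrm{Aut}(F)$. Realising $\alpha$ by a self-homotopy-equivalence of a finite graph and forming its mapping torus exhibits a finite two-dimensional CW-complex which is aspherical (as the mapping torus of a self-homotopy-equivalence of an aspherical space), giving a $K(G,1)$. Hence $G$ is aspherical, of type $F$, and has $\cd G \leqslant 2$. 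Free groups are Lewin (their twisted group rings admit a Hughes-free universal skew field, classically), and then \cite{jaikin2020universality}*{Theorem 3.7 (3)} promotes this to $G$ itself being Lewin. Thus every hypothesis of Theorem \ref{thm. inequality for apspherical group} holds for $G$.

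For the semi-norm property, which is asserted over an arbitrary skew field $D$, I would split on $\rank G_{\mathrm{fab}}$. The case $\rank G_{\mathrm{fab}} \geqslant 2$ is immediate from Proposition \ref{prop. semi-norm}. The case $\rank G_{\mathrm{fab}} = 1$ is Lemma \ref{lem. rkGfb=1 case}, whose hypothesis $\chi(F) \leqslant 0$ is automatic for any finitely generated free $F$ since $\chi(F) = 1 - \rank F$, with the degenerate case $F$ trivial reducing to $G = \Z$, where $\|\cdot\|_\sigma$ is a semi-norm by inspection. The case $\rank G_{\mathrm{fab}} = 0$ cannot arise, as the projection $G \twoheadrightarrow \Z$ from the free-by-cyclic structure forces $\rank G_{\mathrm{fab}} \geqslant 1$.

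The inequality \eqref{eq. 1000} in the complex case then follows directly from Theorem \ref{thm. inequality for apspherical group}. For equality when $\phi$ is fibered, the strategy is to upgrade the hypothesis "$\ker\phi$ is finitely generated" to "$\ker\phi$ is of type $F$" and then invoke the equality clause of Theorem \ref{thm. inequality for apspherical group}. Setting $K = \ker\phi$: the inclusion $K \leqslant G$ together with $\cd G \leqslant 2$ gives $\cd K \leqslant 2$; coherence of finitely generated free-by-cyclic groups (Feighn--Handel) upgrades finite generation of $K$ to finite presentability; and choosing $t \in G$ with $\phi(t) = 1$ exhibits $G$ as $K \rtimes \langle t \rangle$, providing the geometric structure needed to construct a finite $K(K,1)$ from the two-dimensional $K(G,1)$ by a Brown--Bieri--Renz type argument on the algebraic fibration $\phi$.

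The main obstacle is precisely this last step --- packaging coherence, the $\cd \leqslant 2$ bound, and the splitting $G = K \rtimes \Z$ into an honest finite $K(K,1)$. If one prefers to avoid this geometric argument, an alternative is to work directly with Theorem \ref{thm. euler characteristic}: over the two-term free $\Z G$-resolution used in the proof of Proposition \ref{prop. semi-norm}, both $\|\phi\|_{\sigma}$ and $n\cdot\|\phi\|_T$ are expressible as short alternating sums of $\deg_{\bar t}$ of Dieudonn\'e determinants; Lemma \ref{lem. extension and degree}, applied via the extension of $\sigma$ to $\D_G$ afforded by Theorem \ref{thm. extension}, yields the required multiplicativity by $n$, while the $(1-x_1)$-correction term is handled by the same direct calculation already used in the proof of Theorem \ref{thm. inequality for apspherical group}. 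Equality then reduces to showing that the inequality from Lemma \ref{lem. key lem} is an equality for the matrix $M'_2$ in the fibered setting --- a condition analogous to the Friedl--Vidussi monic-ness of twisted Alexander polynomials at fibered characters.
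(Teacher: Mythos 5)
Your overall architecture matches the paper's: asphericity of $G$, the Lewin property, \cref{thm. inequality for apspherical group} for the inequality, and the dichotomy on $\rank G_{\mathrm{fab}}$ (\cref{prop. semi-norm} versus \cref{lem. rkGfb=1 case}) for the semi-norm claim. The genuine gap is exactly the step you flag yourself: upgrading ``$\ker\phi$ finitely generated'' to ``$\ker\phi$ of type $F$'', which is what the equality clause of \cref{thm. inequality for apspherical group} needs. The paper does not assemble this from coherence plus $\cd\leqslant 2$; it invokes the stronger consequence of Feighn--Handel \cite{feighn1999mapping} that \emph{every} finitely generated subgroup of a (finitely generated free)-by-(infinite cyclic) group is of type $F$, so that fibered characters are precisely those with type-$F$ kernel, and the equality is then immediate. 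Your proposed substitute is not yet a proof: finite presentability plus $\cd K\leqslant 2$ gives $K$ of type $FP$ (Brown), but passing from $FP$ to an honest finite $K(K,1)$ runs into the $FP$-versus-$FL$ issue and Wall's finiteness obstruction, and your sketch does not address either; your second alternative (equality in \cref{lem. key lem} for $M'_2$ at fibered characters, ``monic-ness'' \`a la Friedl--Vidussi) is a restatement of what must be proved rather than an argument. Note also that once type $F$ of $\ker\phi$ is known, the paper gets the equality directly from \cref{thm. inequality for apspherical group} (equivalently \cref{thm. euler characteristic} plus \cref{prop. euler characteristic}), so no new determinant computation is needed.

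A smaller but genuine error: your degenerate case. For $F$ trivial, $G=\Z$, the function $\|\cdot\|_\sigma$ is \emph{not} a semi-norm ``by inspection''---the remark following \cref{prop. semi-norm} gives $G=\Z$ as exactly the counterexample, and indeed \cref{thm. euler characteristic} yields $\|\phi\|_\sigma=-n\chi(\{1\})=-n<0$ for the primitive character. The theorem tacitly excludes the trivial fiber (in the paper's rank-one case the kernel of the primitive character is the free fiber, with $\chi\leqslant 0$), so you should either exclude $F=\{1\}$ or note the convention, rather than claim the semi-norm property holds there.
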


\begin{proof}
First, suppose $G_{\mathrm{fab}}=\Z$. Let $\phi\in H^1(G,\Z)$ be the unique (up to sign) primitive integral character. Then $\ker \phi$ is a finitely generated free group, and thus is of type $F$ and satisfies $\chi(\ker\phi)\leqslant 0$. That $\|\cdot\|_{\sigma}$ is a semi-norm follows from Lemma \ref{lem. rkGfb=1 case}. Theorem \ref{thm. inequality for apspherical group} implies that
\[\|\phi\|_{\sigma}=-n\cdot\chi(\ker\phi)=n\cdot\|\phi\|_T.\]

Suppose $\rank G_{\mathrm{fab}}\geqslant 2$. Since $G$ is aspherical (see, e.g., \cite{funke2018alexander}*{Lemma 3.1}), Proposition \ref{prop. semi-norm} implies that $\|\cdot\|_{\sigma}$ is a semi-norm. Since every finitely generated subgroup of $G$ is of type $F$ \cite{feighn1999mapping}, a character $\phi\in H^1(G,\Z)$ is fibered if and only if $\ker\phi$ is of type $F$. Moreover, by \cite{jaikin2020universality}*{Theorem 1.1 and Theorem 3.7 (2)}, $G$ is (Lewin type $F$)-by-(infinite cyclic). Inequality \eqref{eq. 1000}, as well as the equality for fibered characters, follows from Theorem \ref{thm. inequality for apspherical group}.
\end{proof}

\subsection{3-manifold groups}
Let $G$ be the fundamental group of a closed connected orientable $3$-manifold $M$ that fibers over $S^1$. Then $b_1(G)>0$ and thus Lemma \ref{lem. eg of locally indicable} implies that $G$ is locally indicable. In particular, the Thurston norm $\|\cdot\|_T$ of $G$ is well-defined. The goal of this subsection is the following.

\begin{theorem}\label{thm. 3-mnfl}
Suppose that $G$ is the fundamental group of a closed connected orientable $3$-manifold $M$ that fibers over $S^1$. Then $\|\cdot\|_{\sigma}$ is well-defined. Moreover,

\begin{enumerate}[label=(\roman*)]
    \item\label{item. 3-mnfl 1} if $M\neq S^1\times S^2$, then for any representation $\sigma\colon G\rightarrow \GL_n(D)$ of $G$ over a skew field $D$, $\|\cdot\|_{\sigma}$ is a semi-norm;
    \item\label{item. 3-mnfl 2} if $D=\C$, then for every $\phi\in H^1(G,\Z)$, 
\begin{equation}\label{eq. 26}
    \|\phi\|_{\sigma}\leqslant n\cdot\|\phi\|_T
\end{equation}
where$\|\cdot\|_T$ is the Thurston norm of $G$, and where
equality holds if $\phi$ is a fibered character.
\end{enumerate}
\end{theorem}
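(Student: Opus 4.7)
The plan is to reduce to \cref{prop. semi-norm} and \cref{thm. inequality for apspherical group} via the mapping torus structure. Since $M$ fibers over $S^1$, one has $G = \pi_1(F)\rtimes \langle t\rangle$, where $F$ is the closed orientable surface fiber. Observe that $F = S^2$ holds if and only if $M = S^1\times S^2$, and in every other case $F$ is aspherical with $\chi(F)\leqslant 0$, making $\pi_1(F)$ a group of type $F$ and $M$ itself aspherical as a bundle with aspherical base and fiber. In all cases $G$ is (type $F$)-by-(infinite cyclic), so \cref{cor. fxz} guarantees $(\sigma\otimes q)$-acyclicity, and consequently $\|\cdot\|_\sigma$ is well-defined.

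For part \ref{item. 3-mnfl 1}, assume $M\neq S^1\times S^2$; then $M$ is aspherical. Since the fibration gives a surjection $G\twoheadrightarrow \Z$, one always has $\rank G_{\mathrm{fab}}\geqslant 1$. If $\rank G_{\mathrm{fab}}\geqslant 2$, we apply \cref{prop. semi-norm} directly to the aspherical (type $F$)-by-(infinite cyclic) group $G$. If $\rank G_{\mathrm{fab}} = 1$, we invoke \cref{lem. rkGfb=1 case} with $H = \pi_1(F)$; its hypothesis $\chi(H)\leqslant 0$ follows from $F$ being a closed orientable surface other than $S^2$.

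For part \ref{item. 3-mnfl 2}, the extra ingredient needed is that $\pi_1(F)$ is Lewin. For $F = S^2$ this is the trivial group (Lewin vacuously); for $F = T^2$ it is $\Z^2$; and for higher genus it is a standard surface group. In all cases the Lewin property follows from Jaikin-Zapirain's universal Hughes-free field results \cite{jaikin2020universality}. Granting this, $G$ is an aspherical (Lewin type $F$)-by-(infinite cyclic) group (asphericity is clear even in the extremal case $G = \Z$), and \cref{thm. inequality for apspherical group} delivers the inequality $\|\phi\|_\sigma\leqslant n\cdot\|\phi\|_T$ for every $\phi\in H^1(G,\Z)$. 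For equality on a fibered character $\phi$, Stallings' fibration theorem shows that $\ker\phi$ is then the fundamental group of a closed surface, hence of type $F$, so \cref{thm. inequality for apspherical group} again yields the equality.

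The main obstacle is verifying the Lewin property for surface groups of arbitrary genus; this is where the nontrivial input lies, and we rely on Jaikin-Zapirain's universal Hughes-free field theory. The remainder of the argument is a careful unpacking of the fibered structure of $M$ together with the asphericity/$\chi\leqslant 0$ dichotomy for the fiber.
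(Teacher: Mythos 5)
Your reduction hinges on applying \cref{prop. semi-norm} and \cref{thm. inequality for apspherical group} to $G=\pi_1(M)$, and this is where the argument breaks down. In this paper ``aspherical'' is a term of art: it means that $G$ admits a \emph{finite presentation whose presentation $2$-complex has contractible universal cover} (see the definition preceding \cref{prop. semi-norm}), and the proofs of both results use precisely the resulting length-two free resolution $0\to \Z G^{k+m-1}\to \Z G^{k+m}\to \Z G\to 0$. You read ``aspherical'' as ``$M$ is an aspherical space'', but these are very different conditions: if $M\neq S^1\times S^2$ fibers over $S^1$ with fiber of genus $\geqslant 1$, then $G$ is a $3$-dimensional Poincar\'e duality group, so $\cd(G)=3$ and $G$ cannot have an aspherical presentation $2$-complex. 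Hence neither \cref{prop. semi-norm} nor \cref{thm. inequality for apspherical group} is applicable to any of the nontrivial cases of this theorem (your remark that ``asphericity is clear even in the extremal case $G=\Z$'' covers only the case you exclude or which is trivial). The same objection applies to your treatment of part (ii): the inequality $\|\phi\|_{\sigma}\leqslant n\|\phi\|_T$ does not follow by citation.

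This is exactly why the paper gives a separate argument for $3$-manifold groups: it takes the $G$-equivariant CW structure on $\widetilde M$ from McMullen's proof, with chain complex $0\to C_3\to C_2\to C_1\to C_0\to 0$ and boundary maps $\partial_1(e_i)=p\cdot(1-g_i)$, $\partial_3(t)=\sum_j f_j\cdot(1-g_j)$, and splits it by short exact sequences into pieces $A_\ast,B_\ast,E_\ast,F_\ast$; the multiplicativity of Reidemeister torsion then gives
\[\rho_C=\Det_{D(X)}(\sigma\otimes_{\Z}q)(W)\cdot\big(\Det_{D(X)}(\mathrm{Id}-\sigma(g_1)q(g_1))\big)^{-2},\]
with the exponent $-2$ (one factor each from degrees $1$ and $3$) reflecting the three-dimensionality; the single-polytope argument with two generators $g_1,g_2$, the explicit computation of $\deg_s\Det(\mathrm{Id}-\sigma(g_1)q(g_1))=n\cdot\deg_s\Det(\mathrm{Id}-\tau(g_1)q(g_1))$, and \cref{lem. key lem}, \cref{thm. extension}, \cref{cor. ring homo extension}, \cref{lem. extension and degree} are then applied directly to this complex. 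Your observations that are sound --- well-definedness via the mapping torus structure, the $\rank G_{\mathrm{fab}}=1$ case via \cref{lem. rkGfb=1 case} and \cref{thm. euler characteristic}, the Lewin property of $\pi_1(S)$ (via a free-by-cyclic splitting) and hence of $G$, and the use of Stallings' theorem plus \cref{thm. euler characteristic} for equality on fibered characters --- all survive, but the central semi-norm and inequality claims for $\rank G_{\mathrm{fab}}\geqslant 2$ need the $3$-dimensional torsion computation, not a citation of the results for presentation-aspherical groups.
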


\begin{proof}
There is a closed surface $S$  such that $M$ decomposes as a fiber bundle $S\rightarrow M\rightarrow S^1$. In particular, $M$ is a mapping torus of a cellular self map of a finite connected CW-complex. That $\|\cdot\|_{\sigma}$ is well-defined thus follows from Proposition \ref{prop. mapping torus and vanishing betti number}.

We have $G=\pi_1(S)\rtimes \Z$. If $\rank G_{\mathrm{fab}}=1$, then items \ref{item. 3-mnfl 1} and \ref{item. 3-mnfl 2} follow from Theorem \ref{thm. euler characteristic} and Lemma \ref{lem. rkGfb=1 case}. Below we assume $\rank G_{\mathrm{fab}}\geqslant 2$.

Let $\widetilde{M}$ be the universal cover of $M$. By the proof of  \cite{mcmullen2002alexander}*{Theorem 5.1}, $\widetilde{M}$ has a $G$-equivariant CW structure whose cellular chain complex has the form

\begin{tikzcd}
C_{\ast}\colon & 0 \arrow[r] & C_3 \arrow[r,"\partial_3"] & C_2 \arrow[r,"\partial_2"] & C_1 \arrow[r,"\partial_1"] & C_0 \arrow[r] & 0,
\end{tikzcd}

\noindent
where there are $\Z G$-bases $\{p\}$ of $C_0$, $\{e_i\}^k_{i=1}$ of $C_1$, $\{f_i\}^k_{i=1}$ of $C_2$, $\{t\}$ of $C_3$, and there is a generating set $\{g_i\}^k_{i=1}$ of $G$ with the following properties:
\begin{enumerate}[label=(\roman*)]
    \item\label{item. chain complex 1} $\partial _1(e_i)=p\cdot (1-g_i),\, \partial _3(t)=\sum^k_{j=1} f_j\cdot (1-g_j)$.
    \item\label{item. chain complex 2} $X=\{q(g_i)\}^m_{i=1}$ is a basis of $G_{\mathrm{fab}}$ for some $m\leqslant k$, and $q(g_i)=0$ for $i>m$, where $q\colon G\rightarrow G_{\mathrm{fab}}$ is the natural surjection of $G$ onto its maximal free abelian quotient $G_{\mathrm{fab}}$.
\end{enumerate}
We denote the matrix representative of $\partial_{\ast}$ under the above bases by $[\partial_{\ast}]$.

Consider the chain complexes

\begin{tikzcd}
A_{\ast}\colon  & 0 \arrow[r] & \Z G \arrow[r,"1-g_1"] & \Z G \arrow[r] & 0 \arrow[r] & 0 \arrow[r] & 0,\\
B_{\ast}\colon  & 0 \arrow[r] & 0 \arrow[r] & \Z G^{k-1} \arrow[r,"U"] & \Z G^k \arrow[r,"\partial _1"] & \Z G \arrow[r] & 0,\\
E_{\ast}\colon  & 0 \arrow[r] & 0 \arrow[r] & \Z G^{k-1} \arrow[r,"W"] & \Z G^{k-1} \arrow[r] & 0 \arrow[r] & 0,\\
F_{\ast}\colon  & 0 \arrow[r] & 0 \arrow[r] & 0 \arrow[r] & \Z G \arrow[r,"1-g_1"] & \Z G \arrow[r] & 0.
\end{tikzcd}

\noindent
Here, $U$ is the matrix obtained from the matrix $[\partial _2]$ by deleting the first column, and $W$ is obtained from $U$ by deleting the first row.

We then have exact sequences of chain complexes
\begin{equation*}
    0\rightarrow A_{\ast} \rightarrow C_{\ast}\rightarrow B_{\ast} \rightarrow 0,
\end{equation*}
\begin{equation*}
    0\rightarrow E_{\ast} \rightarrow B_{\ast} \rightarrow F_{\ast} \rightarrow 0.
\end{equation*} 

Let $\sigma\colon G\rightarrow \GL_n(D)$ be a representation over a skew field $D$. As before we denote $\ore(DG_{\mathrm{fab}})$ by $D(X)$ to emphasize the role played by $X$. Below, the $q(g_1)$-order will be taken with respect to $X$.

Upon tensoring with $(D(X))^n$ via $\sigma\otimes_{\Z} q$, the above exact sequences become
\begin{equation}\label{eq. exact for 3mnfd 1}
    0\rightarrow A_{\ast}\otimes_{\Z G}(D(X))^n \rightarrow C_{\ast}\otimes_{\Z G}(D(X))^n\rightarrow B_{\ast}\otimes_{\Z G}(D(X))^n \rightarrow 0,
\end{equation}
\begin{equation}\label{eq. exact for 3mnfd 2}
    0\rightarrow E_{\ast}\otimes_{\Z G}(D(X))^n \rightarrow B_{\ast}\otimes_{\Z G}(D(X))^n \rightarrow F_{\ast}\otimes_{\Z G}(D(X))^n \rightarrow 0.
\end{equation} 

Consider the unique non-zero differentials in $A_{\ast}\otimes_{\Z G}(D(X))^n$ and $F_{\ast}\otimes_{\Z G}(D(X))^n$, which are given by the left multiplication by the matrix
\[(\sigma\otimes_{\C}\tau)(1-g_1)=\mathrm{Id}+\sigma(-g_1)q(g_1).\]

Note that every entry of $\sigma(-g_1)q(g_1)$ has $q(g_1)$-order at least $1$. So Lemma \ref{lem. matrix reduction} implies that $(\sigma\otimes_{\C}\tau)(1-g_1)$ is invertible, which in turn implies that $A_{\ast}\otimes_{\Z G}(D(X))^n$ and $F_{\ast}\otimes_{\Z G}(D(X))^n$ are exact. Since $G$ is (type $F$)-by-(infinite cyclic), Corollary \ref{cor. fxz} implies that $C_{\ast}\otimes_{\Z G}(D(X))^n$ is exact. It follows that $B_{\ast}\otimes_{\Z G}(D(X))^n$ is also exact, which in turn implies that $E_{\ast}\otimes_{\Z G}(D(X))^n$ is also exact. Moreover, \eqref{eq. exact for 3mnfd 1} and \eqref{eq. exact for 3mnfd 2} are exact sequences of chain complexes.

Let $\rho_A$ (resp. $\rho_B,\rho_C,\rho_E,\rho_F$) be the Reidemeister torsion of $A_{\ast}\otimes_{\Z G} (D(X))^n$ (resp. $B_{\ast}\otimes_{\Z G} (D(X))^n, C_{\ast}\otimes_{\Z G} (D(X))^n,E_{\ast}\otimes_{\Z G} (D(X))^n,F_{\ast}\otimes_{\Z G} (D(X))^n$). Then (see, e.g., \cite{cohen1973course}*{(17.2)})
\[\rho_C=\rho_A \cdot \rho_B = \rho_A \cdot \rho_E \cdot \rho_F = \Det_{D(X)} (\sigma\otimes_{\Z} q)(W) \cdot (\Det_{D(X)} (\mathrm{Id}-\sigma(g_1)q(g_1)))^{-2}.\]

Let $P\colon D(X)\rightarrow \p(G_{\mathrm{fab}})$ be the polytope homomorphism. Since $W$ is a matrix over $\Z G$, Theorem \ref{thm. single polytope} implies that 
\[P(\Det_{D(X)} (\sigma\otimes_{\Z} q)(W))\in P(D[X^{\pm}])\]
is a single polytope. Therefore,
\[P(\rho_C)\in P(D[X^{\pm}])-P(D[q(g_1)^{\pm}]).\]

We have assumed that $\rank G_{\mathrm{fab}}\geqslant 2$. In particular, $q(g_2)\neq 0$. The above argument with $g_2$ in place of $g_1$ yields that 
\[P(\rho_C)\in P(D[X^{\pm}])-P(D[q(g_2)^{\pm}])\]
and thus $P(\rho_C)$ is a single polytope, which means that $\|\cdot\|_{\sigma}$ is a semi-norm.

\smallskip

We proceed to prove item \ref{item. 3-mnfl 2}. Suppose that $D=\C$ is the field of complex numbers. Let $\phi\in H^1(G,\Z)\smallsetminus\{0\}$. Without loss of generality, we may assume that $\phi$ is a primitive integral character. Let $Y$ be a basis of $G_{\mathrm{fab}}$ such that there is $s\in Y$ with $\phi(s)=1$ and $\phi(y)=0$ for all $y\in Y\smallsetminus\{s\}$. For any right $\Z G$-module $N$, $N\otimes_{\Z G}(D(X))^n$ and $N\otimes_{\Z G}(D(Y))^n$ are naturally isomorphic. Thus, we obtain the following exact sequences from \eqref{eq. exact for 3mnfd 1} and \eqref{eq. exact for 3mnfd 2}
\[  0\rightarrow A_{\ast}\otimes_{\Z G}(\C(Y))^n \rightarrow C_{\ast}\otimes_{\Z G}(\C(Y))^n\rightarrow B_{\ast}\otimes_{\Z G}(\C(Y))^n \rightarrow 0,\]
\[  0\rightarrow E_{\ast}\otimes_{\Z G}(\C(Y))^n \rightarrow B_{\ast}\otimes_{\Z G}(\C(Y))^n \rightarrow F_{\ast}\otimes_{\Z G}(\C(Y))^n \rightarrow 0,\]
which imply that
\begin{equation}\label{eq. 21}
    \|\phi\|_{\sigma}=\deg_s\Det_{\C(Y)} (\sigma\otimes_{\Z} q)(W) - 2\cdot \deg_s\Det_{\C(Y)} (\mathrm{Id}-\sigma(g_1)q(g_1)).
\end{equation}

Let $\tau:\C G\rightarrow \D_G$ be the natural embedding of $\C G$ into its Linnell skew field. The same argument with $\tau$ in place of $\sigma$ yields
\begin{equation}\label{eq. 22}
    \|\phi\|_T=\deg_s\Det_{\D_G(Y)} (\tau\otimes_{\Z} q)(W) - 2\cdot \deg_s\Det_{\D_G(Y)} (\mathrm{Id}-\tau(g_1)q(g_1)).
\end{equation}
We prove that
\begin{equation}\label{eq. the minus part}
    \deg_s\Det_{\C(Y)} (\mathrm{Id}-\sigma(g_1)q(g_1))=n\cdot\deg_s\Det_{\D_G(Y)} (\mathrm{Id}-\tau(g_1)q(g_1)).
\end{equation}

Write $q(g_1)$ as a monomial in $Y$:
\[q(g_1)=s^r\cdot\prod_{y\in Y\smallsetminus\{s\}}y^{r_y},\]
where we use multiplicative notation for the abelian group $G_{\mathrm{fab}}$. 

First consider the case $r=0$. Then $\deg_s\Det_{\C(Y)} (\mathrm{Id}-\sigma(g_1)q(g_1))$ is either $0$ or $-\infty$. We have shown that $\mathrm{Id}-\sigma(g_1)q(g_1)$ is invertible, so 
\[\deg_s\Det_{\C(Y)} (\mathrm{Id}-\sigma(g_1)q(g_1))=0.\]
Clearly, $\deg_s\Det_{\D_G(Y)} (\mathrm{Id}-\tau(g_1)q(g_1))=0$. Thus, \eqref{eq. the minus part} holds in this case.

Consider the case $r\neq 0$. Without loss of generality we may assume $r>0$ (the case $r<0$ can be analyzed in the same way). Think of $\Det_{\C(Y)} (\mathrm{Id}-\sigma(g_1)q(g_1))$ as a polynomial in $s$ with coefficients in $\C(Y\smallsetminus\{s\})$. Then the highest power of $s$ in $\Det_{\C(Y)} (\mathrm{Id}-\sigma(g_1)q(g_1))$ is $s^{nr}$ with coefficient $\Det_{\C(Y)}\sigma(g_1)\cdot \prod_{y\in Y\smallsetminus\{s\}}y^{nr_y}$. The lowest power of $s$ in $\Det_{\C(Y)} (\mathrm{Id}-\sigma(g_1)q(g_1))$ is $s^0=1$ with coefficient $1$. Thus,
$\deg_s\Det_{\C(Y)} (\mathrm{Id}-\sigma(g_1)q(g_1))=nr$. Since $\deg_s\Det_{\D_G(Y)} (\mathrm{Id}-\tau(g_1)q(g_1))=r$, equation \eqref{eq. the minus part} also holds in this case.

Theorem \ref{thm. extension} extends the representation $\sigma\otimes_{\C}\tau\colon G\rightarrow \mathrm{GL}_n(\D_G)$ to a ring homomorphism $\ts\colon \D_G\rightarrow M_n(\D_G)$. By repeatedly using Corollary \ref{cor. ring homo extension}, we further extend $\ts$ to a ring homomorphism (still denoted by) $\ts\colon \D_G(Y)\rightarrow M_n(\D_G(Y))$ such that $\ts(y)=y\cdot \mathrm{Id}$ for all $y\in Y$. Then
\[\ts((\tau\otimes_{\Z}q)(W))=(\sigma\otimes_{\C}\tau\otimes_{\Z}q)(W).\]

Note that $\pi_1(S)$ is Lewin. Indeed, there is a homomorphism $\pi_1(S)\rightarrow \Z$ whose kernel is the fundamental group of a non-compact surface, and thus is free. Therefore, $\pi_1(S)$ is a semi-direct product of a free group with $\Z$. It follows that $\pi_1(S)$ is Lewin and thus so is $G$ \cite{jaikin2020universality}*{Theorems 1.1 and 3.7}.

Lemmata \ref{lem. key lem} and \ref{lem. extension and degree} then imply
\begin{align*}
    \deg_s\Det_{\C(Y)} (\sigma\otimes_{\Z} q)(W)\leqslant&\deg_s\Det_{\D_G(Y)} (\sigma\otimes_{\C}\tau\otimes_{\Z} q)(W)\\
    =&n\cdot \deg_s\Det_{\D_G(Y)} (\tau\otimes_{\Z} q)(W),
\end{align*}
which, together with \eqref{eq. 21}, \eqref{eq. 22}, \eqref{eq. the minus part} finishes the proof of \eqref{eq. 26}.

If $\phi$ is a fibered character, then $\ker\phi$ is the fundamental group of some closed surface \cite{stalling1962fibering} and thus is of type $F$. Theorem \ref{thm. euler characteristic} then implies that \[\|\phi\|_{\sigma}=-n\cdot\chi(\ker\phi)=n\cdot\|\phi\|_T.\qedhere\]
\end{proof}

\bibliographystyle{alpha}
\bibliography{bin_refs}

\end{document}